\DeclareMathOperator*{\diag}{diag}
\DeclareMathOperator*{\esssup}{ess\,sup}
\newtheorem{proposition}{Proposition}
\newtheorem{assumption}{Assumption}
\newtheorem{remark}{Remark}
\newtheorem{definition}{Definition}
\newtheorem{lemma}{Lemma}
\newtheorem{theorem}{Theorem}
\newtheorem{corollary}{Corollary}
\newtheorem{example}{Example}
\newenvironment{proof}{\emph{Proof:}}
\begin{document}
\setlength{\parindent}{2em}
\begin{frontmatter}
\title{Small Gain Theorem-Based Robustness Analysis of Discrete-Time MJLSs with the Markov Chain on a Borel Space and Its Application to NCSs} 

\author[sdnu]{Chunjie Xiao}\ead{$xiaocj6\_sd@163.com$},
\author[sdnu]{Ting Hou \corauthref{cor}}
\corauth[cor]{Corresponding author.}\ead{$ht\_math@sina.com$},
\author[sdust]{Weihai Zhang}\ead{$w\_hzhang@163.com$},
\author[scut]{Feiqi Deng}\ead{$aufqdeng@scut.edu.cn$},
\address[sdnu]{School of Mathematics and Statistics, Shandong Normal University,
Jinan 250014, Shandong Province, China}
\address[sdust]{College of Electrical Engineering and Automation, Shandong University of Science and Technology,
Qingdao 266590, Shandong Province, China}
\address[scut]{School of Automation Science and Engineering, South China University of Technology,
Guangzhou 510640, Guangdong Province, China}

\begin{keyword}                           
~~Borel space; Griding approach;  Markov jump linear systems; Small gain theorem; Stability radius.                       
\end{keyword}                             

\begin{abstract}             
This paper is concerned with the robustness of discrete-time Markov jump linear systems (MJLSs) with the Markov chain on a Borel space. For this general class of MJLSs, a small gain theorem is first established and subsequently applied to derive a lower bound of the stability radius.
On this basis, with the aid of the extended bounded real lemma and Schur complements, the robust stability problems for the MJLSs are tackled via linear matrix inequality (LMI) techniques.
The novel contribution, primarily founded on the scenario where the state space of the Markov chain is restricted in a continuous set, lies in the formulation of a griding approach.
The approach converts the existence problem of solutions of an inequality related to $H_{\infty}$ analysis, which is an infinite-dimensional challenge, into a finite-dimensional LMI feasibility problem.
As an application, within the framework of MJLSs, a robustness issue of the sampled-data systems is addressed by using a Markov chain, which is determined by the initial distribution and the stochastic kernel, to model transmission delays existing in networked control systems (NCSs). Finally, the feasibility of the results is verified through two examples.
\sloppy{}
\end{abstract}
\end{frontmatter}

\section{Introduction}
Markov jump linear systems (MJLSs) have been extensively studied since they were introduced in the early 1960s (see, \cite{Krasovskii1961}) because of their capacity to model dynamic systems whose structures experience random abrupt changes.
An MJLS is composed of a set of subsystems, enabling the random variations of the internal dynamics of the modeled actual system to be achieved through transitions among these subsystems driven by a Markov chain.
Existing results surrounding MJLSs can be found in monographs \cite{BookCosta2005,BookDragan2010,BookDragan2014}. Moreover, numerous applications have been reported in the specialized literature across various fields, including economics (see, \cite{Blair1975IJC}), engineering (see, \cite{Bueno2023SMC}), power generation (see, \cite{Vargas2013,Lin2016}), and networks (see, \cite{ZhangLiqian2005, Impicciatore2024TAC}).

Robust stability refers to the ability of a dynamic system to maintain stability in the presence of uncertainties and external disturbances.
A substantial amount of work has been conducted regarding uncertain MJLSs to address robustness matters (see, \cite{Boukas2001TAC, Souza2006TAC, Aberkane2015SIAM, Lun2019Auto}). Among these studies, the parameter uncertainties include those in the matrices of the system state-space model (see, \cite{Aberkane2015SIAM}) or in the transition probability matrix of the
Markov chain (see, \cite{Lun2019Auto}), generally assumed to be of the norm-bounded type (see, \cite{Boukas2001TAC}) or the polytopic type (see, \cite{Souza2006TAC}).
In particular, for MJLSs subject to norm-bounded uncertainties, the classical quantitative index of robust stability known as stability radius has been extended.
Most findings concerning the estimation of lower bounds for the stability radius are primarily based on specific versions of the small gain theorem (see, \cite{BookDragan2010, BookDragan2014, Aberkane2015SIAM}), which consequently provide sufficient conditions for robust stability.
Additionally, necessary conditions for robust stability have been established for discrete-time systems (see, \cite{todorov2012new}) and for continuous-time systems (see, \cite{Todorov2013Auto}), respectively.

The work mentioned above predominantly focuses on MJLSs with the Markov chain taking values in a finite set, commonly referred to as finite MJLSs.
In this paper, we are interested in discrete-time MJLSs with the Markov chain on a Borel space. The motivations for exploring this class of systems are twofold. On one side, this general setting allows the state space of the Markov chain to
extend beyond a finite set, and it can be an uncountably infinite set, such as a continuous set. On the other side, this extension offers a more accurate model for capturing complex real-world phenomena. For instance, as we will see in Example \ref{exampledelay2}, a continuous-valued Markov chain would be a more suitable choice for modeling the transmission delays in  sampled-data systems. In addition, the solar thermal receiver model studied in \cite{Costa2015} can also be employed to support this assertion.

Over the last decade, the control problems of MJLSs with the Markov chain on a Borel space have gained attention (see, \cite{Ioannis2014, Costa2014,Costa2015, Costa2016, Costa2017,Masashi2018,Xiao2023}), led by the exploration of stability issues discussed in \cite{Li2012}.
Here, we do not aim to be exhaustive but would like to highlight a recent study reported in \cite{Xiao2023}, known as the finite horizon bounded real lemma (BRL), which associates the estimation of the $H_{\infty}$ norm of the input-to-output operator with the existence of the stabilizing solution of the coupled algebraic Riccati equations (coupled-AREs). In this paper, the lemma is further developed by introducing a problem related to the solvability of a coupled nonlinear inequality involving matrix-valued functions (see Lemma \ref{lemmaBRL}).
Within the finite MJLS framework, the linear matrix inequality (LMI)-based BRL (for example, see \cite{Seiler2003}), which is recognized as an effective and practical tool for robust analysis (\cite{Morais2018Auto}), can be obtained by using the Schur complements to the coupled nonlinear inequalities.
With regard to the more general framework where the state space of the Markov chain is a Borel space, an extension of the classical Schur complements is achieved in this paper.
Nonetheless, there are significant challenges in solving the coupled  inequality that involves matrix-valued functions, especially when the Markov chain takes values in an uncountably infinite set.

This paper tackles the robust stability problem of discrete-time MJLSs with the Markov chain on a Borel space. The obtained findings extend the existing research on finite MJLSs;
however, these results are not mere trivial extensions. Ensuring the measurability, boundedness, and integrability of the solutions to the involved coupled equations (such as the coupled-AREs) or inequalities presents more challenges than in the finite case. The main contributions can be summarized as follows:

\begin{itemize}
\item Based on the established BRL,  the small gain theorem has been extended to systems with the Markov chain on a general state space through a Lyapunov-type argument, with its formulation being connected to $H_{\infty}$ analysis. During the derivation, the invertibility of certain operators has been confirmed by applying the infinite-dimensional operator theory.

\item As a direct application of the small gain theorem, a lower bound of the stability radius of systems with norm-bounded uncertainties has been provided in Theorem \ref{lowboundsta963}, expressed in terms of the inverse of the $H_{\infty}$ norm of an input-output operator.

\item Based on Theorem \ref{lowboundsta963}, a sufficient condition for determining the robust stability of the system has been established by estimating the $H_{\infty}$ norm of the corresponding input-output operator through the usage of the BRL. Specifically, the stability margin specific to finite MJLS has been determined by solving a convex optimization problem constrained by a finite number of LMIs. Regarding MJLSs with continuous-valued Markov jump parameters, a gridding technique has been employed to convert the solvability problem of an inequality involving matrix-valued functions, which is associated with the $H_{\infty}$ norm constraint of an input-output operator, into a feasibility problem with a finite number of LMIs.
    In this way, the infinite-dimensional problem has been transformed into a finite-dimensional one, thereby establishing an LMI-based BRL.
    In addition, the feasibility and conservatism of this gridding technique have been explored.

\item  A robustness problem for sampled-data systems with uncertain perturbations existing in controller parameters has been reformulated and addressed within the framework of MJLSs.
\end{itemize}

This paper is structured as follows: In Section \ref{SecPreliminaties}, we provide the essential notations, model description, and several auxiliary results.
The key contribution of Section \ref{SecSmallGain} lies in establishing the small gain theorem, which is then applied in Section \ref{SecStabilityRadii} to derive a lower bound for the stability radius. Section \ref{SecRobustStability} is devoted to presenting feasible sufficient conditions for robust stability of MJLSs, organized into three subsections:
In Subsection \ref{SecFiniteMJLS}, a brief discussion of the finite MJLS case is conducted;
The case of MJLSs with continuous-valued Markov jump parameters is studied in Subsection \ref{SecRobustofConCase}, aided by a proposed griding technique; Furthermore, the feasibility and conservatism of the griding technique are analysed in Subsection \ref{subsecFeaConGriding}.
Section \ref{SecApptoNCS} shows two examples based on the NCSs. The work is concluded in Section \ref{SecConclusions}.

\section{Preliminary Results}\label{SecPreliminaties}
\subsection{Notations and an Auxiliary Result}
As usual, $\mathbf{E}\{\cdot\}$ stands for the mathematical expectation.
$\mathbb{N}$ is the set of all nonnegative integers.
$\mathbb{N}^{+}:=\mathbb{N}/\{0\}.$
For integers $k_{1}\leq k_{2}$, $\overline{k_{1},k_2}:=\{k_{1},k_{1}+1,k_{1}+2,\cdots,k_{2}\}$.
$\mathbb{R}^{n}$ denotes the $n$-dimensional real Euclidean space with the Euclidean norm $\|\cdot\|_{\mathbb{R}^{n}}$.
For Banach spaces $\mathcal{H}$ and $\bar{\mathcal{H}}$ with norms $\|\cdot\|_{\mathcal{H}}$ and $\|\cdot\|_{\bar{\mathcal{H}}}$, $\mathbf{B}(\mathcal{H},\bar{\mathcal{H}})$
represents the Banach space of all bounded linear operators from $\mathcal{H}$ to $\bar{\mathcal{H}}$, equipped with the induced norm
$\|\mathcal{L}\|_{\mathbf{B}(\mathcal{H},\bar{\mathcal{H}})}:=\sup_{h\in\mathcal{H},\ \|h\|_{\mathcal{H}}=1}\{{\|\mathcal{L}h\|_{\bar{\mathcal{H}}}}\},$ where $\mathcal{L}\in\mathbf{B}(\mathcal{H},\bar{\mathcal{H}})$.
Particularly, $\mathbb{R}^{n\times m}:=\mathbf{B}(\mathbb{R}^{m},\mathbb{R}^{n})$
is the space composed of $(n \times m)$-dimensional real matrices with the norm defined as $\|M\|_{\mathbb{R}^{n\times m}}=[\lambda_{max}(M^{T}M)]^{\frac{1}{2}}$,
where $\lambda_{max}(M)$ and $M^{T}$ indicate the largest eigenvalue and transpose of $M\in{\mathbb{R}^{n\times m}}$.
$\mathbb{S}^{n}:=\{M\in{\mathbb{R}^{n\times n}}|M^{T}=M\}$;
$\mathbb{S}^{n+*}:=\{M\in{\mathbb{S}^{n}}|M\text{ is a positive definite matrix}\}$.
$I$ represents the identity matrix with appropriate dimension.
$\diag\{M_{1},  M_{2}, \cdots, M_{n}\}$ denotes a block diagonal matrix  whose diagonal term equals $M_{i}$ for any index $i\in\overline{1, n}.$

Throughout this paper, $\Theta$ is assumed to be a Borel subset of a separable and complete metric space.
The Borel space $(\Theta, \mathcal{B}(\Theta))$ is defined as $\Theta$,
together with its Borel $\sigma$-algebra $\mathcal{B}(\Theta)$.
$\mu$ is a $\sigma$-finite measure on $\mathcal{B}(\Theta)$.
One can refer to \cite{Xiao2023} for more details on Borel space.

Now we proceed with giving some matrix-valued function spaces needed in the sequel. Let $\mathcal{H}^{n\times m}$
($\mathcal{SH}^{n}$) denote the space of measurable matrix-valued functions
$P(\cdot): \Theta\rightarrow \mathbb{R}^{n\times m}$ ($P(\cdot): \Theta\rightarrow \mathbb{S}^{n}$).
$\mathcal{H}^{n\times m}_{1}=\big\{P\in\mathcal{H}^{n\times m}\big| \|P\|_{\mathcal{H}^{n\times m}_{1}}:=\int_{\Theta}\|P(\ell)\|_{\mathbb{R}^{n\times m}}\mu(d\ell)<\infty\big\}$;
$\mathcal{H}^{n\times m}_{\infty}=\big\{P\in\mathcal{H}^{n\times m}\big| \|P\|_{\mathcal{H}^{n\times m}_{\infty}}=\esssup\{\|P(\ell)\|_{\mathbb{R}^{n\times m}}$, $\ell\in{\Theta}\}<\infty\big\}.$
$(\mathcal{H}_{1}^{n\times m},\|\cdot\|_{\mathcal{H}^{n\times m}_{1}})$ and $(\mathcal{H}^{n\times m}_{\infty},\|\cdot\|_{\mathcal{H}^{n\times m}_{\infty}})$ are Banach spaces (see \cite{Costa2014}).
$\mathcal{SH}^{n}_{1}=\big\{P\in\mathcal{SH}^{n}\big| \|P\|_{\mathcal{H}^{n\times n}_{1}}<\infty\big\}$;
$\mathcal{SH}^{n}_{\infty}=\big\{P\in\mathcal{SH}^{n}\big| \|P\|_{\mathcal{H}^{n\times n}_{\infty}}<\infty\big\}$.
$(\mathcal{SH}^{n}_{\infty},\|\cdot\|_{\mathcal{H}^{n\times n}_{\infty}})$ is an ordered Banach space with the order induced by
$\mathcal{H}_{\infty}^{n+}=\big\{P \in \mathcal{SH}_{\infty}^{n} | P(\ell)\geq 0\ \mu\text{-almost everywhere on } \Theta\ (\mu\text{-}a.e.)\big\}$.
$\mathcal{H}_{\infty}^{n+*}=\big\{P \in \mathcal{SH}_{\infty}^{n} |P(\ell)\gg 0 \ \mu\text{-}a.e. \big\}$,
where $P(\ell)\gg 0$ indicates that $P(\ell) \geq \xi I$ for some $\xi>0$;
$\mathcal{H}_{\infty}^{n-*}=\big\{P \in \mathcal{SH}_{\infty}^{n} |P(\ell)\ll 0 \ \mu\text{-}a.e. \big\}$,
where $P(\ell)\ll 0$ indicates that $P(\ell) \leq -\xi I$ for some $\xi>0$.
In the paper, all properties of Borel-measurable functions, including inequalities and equations, should be understood in the sense of $\mu$-$a.e.$, or for $\mu$-almost all $\ell\in\Theta$, unless otherwise specified.

The following result can be viewed as a generalization of the classical Schur complements.

\begin{proposition}\label{Schurcom}
For arbitrary $P_{1}\in \mathcal{SH}^{n}_{\infty},\ P_{2}\in \mathcal{H}^{n\times m}_{\infty},$ and $P_{3}\in \mathcal{SH}^{m}_{\infty}$, the following are equivalent:\\
$(i)$~$P=\left[
       \begin{array}{cc}
         P_{1} & P_{2} \\
        P_{2}^{T}& P_{3} \\
       \end{array}
     \right]\in \mathcal{H}^{(n+m)+*}_{\infty};$\\
$(ii)$~$P_{3}\in \mathcal{H}^{m+*}_{\infty}$ and
$P_{1}-P_{2}P_{3}^{-1}P_{2}^{T}\in \mathcal{H}^{n+*}_{\infty};$\\
$(iii)$~$P_{1}\in \mathcal{H}^{n+*}_{\infty}$ and
$P_{3}-P_{2}^{T}P_{1}^{-1}P_{2}\in \mathcal{H}^{m+*}_{\infty}.$
\end{proposition}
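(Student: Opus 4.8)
The plan is to reduce everything to the classical (pointwise, finite-dimensional) Schur complement applied at $\mu$-almost every $\ell\in\Theta$, and then to upgrade the three pointwise properties that define membership in the spaces $\mathcal{H}^{\bullet+*}_{\infty}$ -- measurability, essential boundedness, and the \emph{uniform} positivity $\geq\xi I$ -- so that they hold globally. The engine for all of this is the congruence factorization
\begin{equation*}
\left[\begin{array}{cc} P_{1} & P_{2} \\ P_{2}^{T} & P_{3} \end{array}\right] = L \left[\begin{array}{cc} S & 0 \\ 0 & P_{3} \end{array}\right] L^{T}, \quad L = \left[\begin{array}{cc} I & P_{2}P_{3}^{-1} \\ 0 & I \end{array}\right],
\end{equation*}
where $S := P_{1} - P_{2} P_{3}^{-1} P_{2}^{T}$, valid wherever $P_{3}(\ell)$ is invertible. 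Since $L$ and $L^{-1}$ are block-triangular with identity diagonal blocks and a single off-diagonal block built from $P_{2}$ and $P_{3}^{-1}$, they will be essentially bounded together with their inverses as soon as $P_{3}^{-1}$ is; this is what lets me transport uniform positive definiteness back and forth between $P$ and the block-diagonal matrix $\diag\{S,P_{3}\}$.

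Two auxiliary observations will be recorded first. (a) If $Q\in\mathcal{H}^{m+*}_{\infty}$, say $\xi I\leq Q(\ell)\leq cI$ $\mu$-$a.e.$, then $Q(\ell)$ is invertible on a co-null set, the map $\ell\mapsto Q(\ell)^{-1}$ is measurable (being the composition of the measurable $Q$ with the continuous inversion map on invertible matrices), and $(1/c)I\leq Q(\ell)^{-1}\leq(1/\xi)I$, so $Q^{-1}\in\mathcal{H}^{m+*}_{\infty}$. (b) If $P\in\mathcal{H}^{k+*}_{\infty}$ with $P\geq\xi I$ $a.e.$ and $T\in\mathcal{H}^{k\times k}_{\infty}$ satisfies $\|T^{-1}(\ell)\|\leq\beta$ $a.e.$, then for any unit vector $v$ one has $\|Tv\|\geq\|v\|/\beta$, whence $v^{T} T^{T} P T v\geq(\xi/\beta^{2})\|v\|^{2}$; together with the trivial upper bound $\|T^{T} P T\|\leq\|T\|_{\infty}^{2}\|P\|_{\infty}$, this shows $T^{T} P T\in\mathcal{H}^{k+*}_{\infty}$. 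In words, congruence by an essentially bounded, boundedly invertible $T$ preserves membership in $\mathcal{H}^{k+*}_{\infty}$.

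Then come the implications. For (i)$\Rightarrow$(ii): restricting the quadratic form $v^{T} P(\ell) v$ to vectors $v=(0,y)^{T}$ immediately gives $P_{3}\geq\xi I$ $a.e.$, and $P_{3}$ is essentially bounded because $P$ is, so $P_{3}\in\mathcal{H}^{m+*}_{\infty}$; by (a), $P_{3}^{-1}\in\mathcal{H}^{m+*}_{\infty}$ and hence $L,L^{-1}$ are essentially bounded. Applying (b) to $\diag\{S,P_{3}\}=L^{-1}P(L^{-1})^{T}$ shows this block-diagonal matrix lies in $\mathcal{H}^{(n+m)+*}_{\infty}$, and reading off its top-left block yields $S\in\mathcal{H}^{n+*}_{\infty}$ ($S$ being automatically measurable and bounded as a product of such functions). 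For (ii)$\Rightarrow$(i): here $P_{3}^{-1}\in\mathcal{H}^{m+*}_{\infty}$ and $S\in\mathcal{H}^{n+*}_{\infty}$ are assumed, so $\diag\{S,P_{3}\}\in\mathcal{H}^{(n+m)+*}_{\infty}$, and applying (b) in the reverse direction to $P=L\,\diag\{S,P_{3}\}\,L^{T}$ gives $P\in\mathcal{H}^{(n+m)+*}_{\infty}$, the symmetry and boundedness of $P$ being clear. The equivalence (i)$\Leftrightarrow$(iii) will then follow by the same argument after conjugating by the block-swap permutation $J=\left[\begin{array}{cc}0&I\\I&0\end{array}\right]$, which interchanges the roles of $P_{1}$ and $P_{3}$ and replaces $P_{3}^{-1}$ by $P_{1}^{-1}$.

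The main obstacle -- and the only place where the Borel-space setting genuinely differs from the finite-dimensional Schur complement -- is the passage from pointwise positive definiteness to the \emph{uniform} lower bound $\geq\xi I$ required by $\mathcal{H}^{\bullet+*}_{\infty}$, together with the measurability and essential boundedness of the inverse $P_{3}^{-1}$ (or $P_{1}^{-1}$). The factorization above is precisely what converts these analytic bookkeeping issues into the single clean statement (b), that congruence by a uniformly bounded, uniformly invertible transformation preserves uniform positivity; pinning down the explicit constant $\xi/\beta^{2}$ there is the heart of the argument.
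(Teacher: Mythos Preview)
Your proof is correct and rests on the same block-$LDL^{T}$ congruence factorization that the paper uses. The main organizational difference is that you abstract the key step into the clean lemma (b) --- congruence by an essentially bounded, boundedly invertible $T$ preserves membership in $\mathcal{H}^{k+*}_{\infty}$, with the explicit lower constant $\xi/\beta^{2}$ --- whereas the paper re-derives the uniform lower bound by hand in each implication (e.g.\ via an auxiliary parameter $\xi_{4}\in(0,1)$ in the step $(iii)\Rightarrow(i)$). You also obtain $(i)\Leftrightarrow(iii)$ from $(i)\Leftrightarrow(ii)$ by conjugating with the block-swap permutation $J$, while the paper proves $(ii)\Rightarrow(iii)$ directly by relating the two block-diagonal forms through a further transformation $U$ and checking $U^{T}U\geq\xi_{8}I$. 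The two arguments are equivalent in substance; yours is more economical, while the paper's keeps every constant explicit at each stage.
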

\begin{proof}
See Appendix \ref{AppA}.
\end{proof}

\subsection{Model Description and Basic Definitions}

On a probability space $(\Omega, \mathfrak{F}, \mathbb{P})$,
define a Markov chain $\{\vartheta(k), k\in\mathbb{N}\}$ taking values in $\Theta$
with the initial distribution given by a probability measure $\mu_{0}$
and the stochastic kernel $\mathbb{G}(\cdot,\cdot)$ satisfying
\begin{equation*}\label{308regularcon}
\mathbb{G}(\vartheta(k),\Lambda)=\mathbb{P}(\vartheta(k+1)\in \Lambda|\vartheta(k))
\text{ almost surely }(a.s.),
\end{equation*}
$\ k\in\mathbb{N},
\ \Lambda\in\mathcal{B}(\Theta).$
The assumption regarding $(\Theta, \mathcal{B}(\Theta))$ being a Borel space can ensure that the defined Markov chain is well constructed (see \cite{Kallenberg2002}).
Additionally,  we make the following assumptions throughout this paper:
\begin{assumption}\label{assMarkov}
$(i)$~The initial distribution $\mu_{0}$ of the Markov chain is absolutely continuous w.r.t. $\mu$;\\
$(ii)$~For any $\ell\in\Theta$,
$\mathbb{G}(\ell,\cdot)$ has a density $g(\ell,\cdot)$ with respect to (w.r.t.) $\mu$,
that is,
$\mathbb{G}(\ell,\Lambda)
=\int_{\Lambda}g(\ell,t)\mu(dt)$ for any $\Lambda\in\mathcal{B}(\Theta)$.
\end{assumption}

By considering $(i)$ of Assumption \ref{assMarkov},
it follows from the Radon-Nikodym theorem (see, for example, Theorem 13.2 in \cite{BookBillingsley1995}) that there is  a density $\nu_{0}$ of $\mu_{0}$ such that  $\mu_{0}(\Lambda)=\int_{\Lambda}\nu_{0}(\ell)\mu(d\ell)$ for any $\Lambda\in{\mathcal{B}(\Theta)}$.
Define a sequence of functions $\{\nu_{k},k\in\mathbb{N}\}$ as
\begin{equation}\label{n215}
\nu_{k+1}(\ell)=\int_{\Theta}\nu_{k}(t)g(t,\ell)\mu(dt),\ \ell\in\Theta
\end{equation}
with the initial value $\nu_{0}$.
It can be seen from, for example, \cite{Costa2016} or Pages 25-26 of \cite{BookLerma2003}, that
for any $\Lambda\in{\mathcal{B}(\Theta)}$,
\begin{equation}\label{ProMa206}
\int_{\Lambda}\nu_{k}(\ell)\mu(d\ell)
=\mathbb{P}\{\vartheta(k)\in{\Lambda}\},\ k\in\mathbb{N}.
\end{equation}
That is, for each $k\in\mathbb{N}$, $\nu_{k}$ serves as the density of the distribution of $\vartheta(k)$ w.r.t. $\mu$.
Therefore, the constructed Markov chain $\{\vartheta(k), k\in\mathbb{N}\}$
is completely determined by its initial distribution and stochastic kernel.

Consider the following  MJLS:
\begin{equation} \label{414SGsystem}
\left\{
\begin{array}{ll}
x(k+1)=A(\vartheta(k))x(k)+B(\vartheta(k))v(k), \\
z(k)=C(\vartheta(k))x(k)+D(\vartheta(k))v(k),\ k\in\mathbb{N}
\end{array}
\right.
\end{equation}
with $x(0)=x_{0}$  being a deterministic vector in $\mathbb{R}^{n}$ and $\vartheta(0)=\vartheta_{0}$ being a random variable.
$x(k)\in{\mathbb{R}^{n}}$, $v(k)\in{\mathbb{R}^{r}}$, and $z(k)\in{\mathbb{R}^{r}}$ are the system state, input, and output. For $k\in\mathbb{N},$
$\mathfrak{F}_{k}$ stands for the $\sigma$-field generated by $\{\vartheta_{0}, \vartheta(1), \cdots, \vartheta(k)\}$.

In this paper, Regarding MJLS \eqref{414SGsystem}, assume that:
\begin{assumption}\label{Assumption1}
$(i)$~$A=\{A(\ell)\}_{\ell\in\Theta}\in\mathcal{H}^{n\times n}_{\infty}$,
$B=\{B(\ell)\}_{\ell\in\Theta}\in\mathcal{H}^{n\times r}_{\infty}$,
$C=\{C(\ell)\}_{\ell\in\Theta}\in\mathcal{H}^{r\times n}_{\infty}$,
$D=\{D(\ell)\}_{\ell\in\Theta}\in\mathcal{H}^{r\times r}_{\infty}$;\\
$(ii)$~The Markov chain $\{\vartheta(k),k\in\mathbb{N}\}$ is directly accessible;\\
$(iii)$~$C(\ell)^{T}D(\ell)=0\ \mu\text{-}a.e..$
\end{assumption}

Review the concepts of stability and detectability proposed in  \cite{Xiao2023} and \cite{Xiao2024}, as they will be utilized in this paper.

\begin{definition}
Consider the autonomous MJLS termed $(A|\mathbb{G})$:
\begin{equation*}
x(k+1)=A(\vartheta(k))x(k),\ k\in\mathbb{N}.
\end{equation*}
$(A|\mathbb{G})$  is said to be
exponentially mean-square stable (EMSS)
if there exist $\beta >0$, $\alpha \in(0,1)$ such that
 for any initial conditions $(x_0,\vartheta_{0})$, we have
$$\mathbf{E}\{\left.\|x(k)\|_{\mathbb{R}^{n}}^{2}\right\} \leq \beta \alpha^{k}\|x_{0}\|_{\mathbb{R}^{n}}^{2},\ k \in \mathbb{N}.$$ Further, MJLS \eqref{414SGsystem} is said to be internally stable if $(A|\mathbb{G})$ is EMSS.
\end{definition}

\begin{definition}\label{defexde1009}
Consider the following MJLS termed $(A;C|\mathbb{G})$:
$$\left\{
\begin{array}{ll}
x(k+1)=A(\vartheta(k))x(k),\ \\
z(k)=C(\vartheta(k))x(k),\ k\in\mathbb{N}.
\end{array}
\right.$$
$(A;C|\mathbb{G})$ is said to be detectable if there exists $H\in\mathcal{H}^{n\times r}_{\infty}$ such that $(A
+HC|\mathbb{G})$ is EMSS.
\end{definition}

\subsection{Some Preliminary Robustness Results}

In this subsection, we present some preliminary results regarding robustness of MJLSs with the Markov chain on Borel space  $(\Theta,\mathcal{B}(\Theta))$.

We begin by defining some Hilbert spaces:
Define $\tilde{l}^{2}(\mathbb{N};\mathbb{R}^{r})$ as the space of all measurable functions $v: \mathbb{N}\times \Omega\rightarrow \mathbb{R}^{r}$
with
$$\|v\|_{\tilde{l}^{2}(\mathbb{N};\mathbb{R}^{r})}
:=[\sum_{k=0}^{\infty}\mathbf{E}(\|v(k)\|_{\mathbb{R}^{r}}^{2})]^{\frac{1}{2}}
<\infty,$$
and it is known that $\tilde{l}^{2}(\mathbb{N};\mathbb{R}^{r})$ is a Hilbert space. In what follows, by $l^{2}(\mathbb{N};\mathbb{R}^{r})$ we denote the space of all $v\in\tilde{l}^{2}(\mathbb{N};\mathbb{R}^{r})$ with that
$v$ is a sequence of $\mathbb{R}^{r}$-valued random variables and that $v(k)$ is $\mathfrak{F}_{k}$-measurable for any $k\in \mathbb{N}$.  $l^{2}(\mathbb{N};\mathbb{R}^{r})$ is a Hilbert space with norm $\|v\|_{l^{2}(\mathbb{N};\mathbb{R}^{r})}
:=[\sum_{k=0}^{\infty}\mathbf{E}(\|v(k)\|_{\mathbb{R}^{r}}^{2})]
^{\frac{1}{2}}$ since it is closed in $\tilde{l}^{2}(\mathbb{N};\mathbb{R}^{r})$.

In MJLS \eqref{414SGsystem}, the input $v=\{v(k),k\in\mathbb{N}\}$ is a random sequence which belongs to $l^{2}(\mathbb{N};\mathbb{R}^{r})$. Let $\phi_{x}(k,0,\vartheta_{0},v)$ and $\phi_{z}(k,0,\vartheta_{0},v)$ denote the state and output of MJLS \eqref{414SGsystem} with the initial conditions $x(0)=0$ and $\vartheta_{0}$. It can be proved similarly to Corollary 2 in \cite{Xiao2023} that when MJLS \eqref{414SGsystem} is internally stable, $\{\phi_{x}(k,0,\vartheta_{0},v),k\in\mathbb{N}\}
\in{l^{2}(\mathbb{N};\mathbb{R}^{n})}$
for any $v\in{l^{2}(\mathbb{N};\mathbb{R}^{r})},$ and further, $\{\phi_{z}(k,0,\vartheta_{0},v),k\in\mathbb{N}\}\in{l^{2}(\mathbb{N};\mathbb{R}^{r})}.$
Therefore, we can define a bounded linear operator $\mathfrak{L}$ by
$\mathfrak{L}(v)(k)=\phi_{z}(k,0,\vartheta_{0},v),
\ k\in\mathbb{N},$
and it is called the input-to-output operator associated with MJLS \eqref{414SGsystem}. Alternatively,  MJLS \eqref{414SGsystem} is said to be the state space realization of operator $\mathfrak{L}$.
To define the $H_{\infty}$ norm of  $\mathfrak{L}$, we introduce the following performance function:
\begin{equation}\label{711J}
J_{\gamma}(x_{0},\vartheta_{0},v)=
\|\mathfrak{L}(v)\|_{l^{2}(\mathbb{N};\mathbb{R}^{r})}^{2}-\gamma\|v\|_{l^{2}(\mathbb{N};\mathbb{R}^{r})}^{2},\ \gamma>0.
\end{equation}
When MJLS \eqref{414SGsystem} is internally stable, the $H_{\infty}$ norm of the input-to-output operator $\mathfrak{L}$ associated with the system \eqref{414SGsystem} is defined as
\begin{equation*}
\begin{split}
\|\mathfrak{L}\|_{\infty}&=\inf\big\{\sqrt{\gamma}|
J_{\gamma}(0,\vartheta_{0},v)<0 \text{ for any } \vartheta_{0} \text{ and}\\
& \text{ nonzero input } v \in l^{2}(\mathbb{N};\mathbb{R}^{r}), \gamma>0
\big\}.
\end{split}
\end{equation*}
It should be remarked that $\|\mathfrak{L}\|_{\infty}$ is
irrelevant to the distribution of $\vartheta_{0}$ and it measures the influence of the disturbances in the worst-case scenario. Moreover, the $H_{\infty}$ norm can also be defined as the form of the $l^{2}$-induced norm of $\mathfrak{L}$ (see \cite{Xiao2023}).

For brevity, in the sequel, given $\gamma>0$, for any $P\in\mathcal{SH}^{n}_{\infty}$ and $\ell\in\Theta$, define the following operators:
\begin{align}
& \mathcal{E}(P)(\ell):=\int_{\Theta}g(\ell,t)P(t)\mu(dt),\nonumber \\
& \mathcal{T}_{A}(P)(\ell):=A(\ell)^{T}\mathcal{E}(P)(\ell)A(\ell),\label{OpTA355} \\
& \Psi_{1}(P)(\ell):=\mathcal{T}_{A}(P)(\ell)+C(\ell)^{T}C(\ell),\nonumber\\
&\Psi_{2}(P)(\ell):=A(\ell)^{T}\mathcal{E}(P)(\ell)B(\ell),
\nonumber\\
&\Psi_{3}^{\gamma}(P)(\ell):=\gamma I-
\mathcal{T}_{B}(P)(\ell)
-D(\ell)^{T}D(\ell),\nonumber\\
&\mathcal{F}^{\gamma}(P)(\ell):=-\Psi_{3}^{\gamma}(P)(\ell)^{-1}
\Psi_{2}(P)(\ell)^{T},\nonumber\\
&\bar{\mathcal{F}}^{\gamma}(P)(\ell):=-[\gamma I-\mathcal{T}_{B}(P)(\ell)]^{-1}\Psi_{2}(P)(\ell)^{T},\label{371opFbar}
\end{align}
where $\mathcal{T}_{B}(\cdot)$ is given by \eqref{OpTA355} with $A$ substituted by $B.$

One can verify that $\mathcal{E}(P)\in\mathbf{B}(\mathcal{SH}_{\infty}^{n},
\mathcal{SH}_{\infty}^{n}),$ $\mathcal{T}_{A}\in\mathbf{B}(\mathcal{SH}_{\infty}^{n},
\mathcal{SH}_{\infty}^{n}),$ $\Psi_{1}\in\mathbf{B}(\mathcal{SH}_{\infty}^{n},
\mathcal{SH}_{\infty}^{n}),$ $\Psi_{2}\in\mathbf{B}(\mathcal{SH}_{\infty}^{n}, \mathcal{H}_{\infty}^{n\times r}),$ $\Psi_{3}^{\gamma}\in\mathbf{B}(\mathcal{SH}_{\infty}^{n}, \mathcal{SH}_{\infty}^{r}).$
Moreover, it is worth emphasizing that in this paper, prior to  employing the operator $\mathcal{F}^{\gamma}(P)$, we always check that either $\Psi_{3}^{\gamma}(P)(\ell)\gg 0 \ \mu\text{-}a.e.$ or $\Psi_{3}^{\gamma}(P)(\ell)\ll 0 \ \mu\text{-}a.e.$. Indeed, it can be seen from  Remark 7 in \cite{Xiao2024} that these conditions are essential to guarantee $[\Psi_{3}^{\gamma}(P)]^{-1}\in\mathcal{SH}_{\infty}^{r}$.
Further, $\mathcal{F}^{\gamma}\in\mathbf{B}(\mathcal{SH}_{\infty}^{n}, \mathcal{H}_{\infty}^{r\times n})$
is obtained.

Throughout this paper, the following assumptions are made on the Markov chain, and their existence, as shown in \cite{Xiao2023}, plays a crucial role in building the BRL, which will be applied in the sequel.
\begin{assumption}\label{Asspositive}
$(i)$~$\nu_{0}(\ell)>0$ $\mu\text{-}a.e.$;\\
$(ii)$~$\int_{\Theta}g(t,\ell)\mu(dt)>0\ \mu\text{-}a.e..$
\end{assumption}

\begin{proposition}\label{341proposition}
~$(i)$~If Assumption \ref{Asspositive} is fulfilled,
then $\nu_{k}(\ell)>0$ $\mu\text{-}a.e.$ for each $k\in\mathbb{N}$.\\
~~~~$(ii)$~Conversely, if  $\nu_{k}(\ell)>0$ $\mu\text{-}a.e.$ for each $k\in\mathbb{N}$, then  we have that $\int_{\Theta}g(t,\ell)\mu(dt)>0\ \mu\text{-}a.e..$.
\end{proposition}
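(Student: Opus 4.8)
The plan is to treat the two implications separately: part $(i)$ by induction on $k$ driven by the forward recursion \eqref{n215}, and part $(ii)$ by a short contrapositive argument applied to $\nu_{1}$.

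For part $(i)$, the base case $k=0$ is precisely Assumption \ref{Asspositive}$(i)$. For the inductive step I would assume $\nu_{k}(\ell)>0$ $\mu\text{-}a.e.$ and establish the same for $\nu_{k+1}$. By \eqref{n215} we have $\nu_{k+1}(\ell)=\int_{\Theta}\nu_{k}(t)g(t,\ell)\mu(dt)$, whose integrand is nonnegative for $\mu$-almost all $t$, since $g$ is a density (hence $g\geq 0$) and $\nu_{k}>0$ $\mu\text{-}a.e.$ by the inductive hypothesis. This integral can vanish at a given $\ell$ only if $\nu_{k}(t)g(t,\ell)=0$ for $\mu$-almost all $t$; using $\nu_{k}>0$ $\mu\text{-}a.e.$ this forces $g(\cdot,\ell)=0$ $\mu\text{-}a.e.$, i.e. $\int_{\Theta}g(t,\ell)\mu(dt)=0$. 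Assumption \ref{Asspositive}$(ii)$ rules out the latter for $\mu$-almost all $\ell$, so $\nu_{k+1}(\ell)>0$ $\mu\text{-}a.e.$, closing the induction. Concretely, for such $\ell$ the set $S_{\ell}:=\{t:g(t,\ell)>0\}$ has positive $\mu$-measure; intersecting it with the full-measure set $\{\nu_{k}>0\}$ produces a positive-measure set on which $\nu_{k}(t)g(t,\ell)>0$, whence the integral is strictly positive.

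For part $(ii)$, I would argue by contradiction. Suppose $\int_{\Theta}g(t,\ell)\mu(dt)=0$ on a set $E\subseteq\Theta$ with $\mu(E)>0$. For each $\ell\in E$, nonnegativity of $g$ gives $g(\cdot,\ell)=0$ $\mu\text{-}a.e.$, and then \eqref{n215} with $k=0$ yields $\nu_{1}(\ell)=\int_{\Theta}\nu_{0}(t)g(t,\ell)\mu(dt)=0$ for every $\ell\in E$. Since $\mu(E)>0$, this contradicts the standing hypothesis that $\nu_{k}>0$ $\mu\text{-}a.e.$ for each $k$, already at $k=1$. Hence $\int_{\Theta}g(t,\ell)\mu(dt)>0$ $\mu\text{-}a.e.$

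The only delicate point is the measure-theoretic bookkeeping, and I expect that to be the main (if mild) obstacle. The exceptional null set where $\nu_{k}$ fails to be positive is fixed, independent of $\ell$, whereas $S_{\ell}$ varies with $\ell$; the two must be combined correctly, invoking the elementary fact that a $\mu\text{-}a.e.$ nonnegative function with strictly positive integral is positive on a set of positive $\mu$-measure. Underlying all of this is the joint measurability of $g$, which ensures that $\ell\mapsto\int_{\Theta}g(t,\ell)\mu(dt)$ is measurable and that Assumption \ref{Asspositive}$(ii)$ is well posed; this is guaranteed by the construction of the stochastic kernel density, so no further hypotheses are needed.
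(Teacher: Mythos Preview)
Your proposal is correct and follows essentially the same architecture as the paper's proof: induction on $k$ for part $(i)$ and a contradiction argument for part $(ii)$. The only noteworthy difference is technical packaging. Where you argue pointwise---fixing $\ell$ in the full-measure set $\{\int_{\Theta}g(t,\ell)\mu(dt)>0\}$, intersecting $S_{\ell}$ with $\{\nu_{k}>0\}$, and concluding $\nu_{k+1}(\ell)>0$---the paper instead supposes $\nu_{k+1}=0$ on a positive-measure set $\bar{\Lambda}$ and applies Fubini's theorem twice to push the contradiction back to Assumption~\ref{Asspositive}$(ii)$; similarly for part $(ii)$ the paper integrates over $\bar{\Lambda}$ and swaps the order of integration rather than arguing $\nu_{1}(\ell)=0$ for each $\ell\in E$. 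The Fubini route has the mild advantage that the null-set bookkeeping you flag as ``the only delicate point'' is absorbed automatically into the interchange of integrals, whereas your pointwise argument is slightly more elementary and makes the mechanism (the positivity of $\nu_{k}$ on $S_{\ell}$) more transparent. Either way the content is the same.
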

\begin{proof}
See Appendix \ref{Pronondegen}.
\end{proof}
\begin{remark}
Note that $\{\vartheta(k),k\in\mathbb{N}\}$ having a positive distribution at each moment is necessary for deriving the BRL,
for example, refer to \cite{Aberkane2015SIAM} for finite MJLSs and
\cite{Xiao2023} for the Borel setting.
If Assumption \ref{Asspositive} is fulfilled,
from \eqref{ProMa206} and Proposition \ref{341proposition}, it follows that for any $\Lambda\in{\mathcal{B}(\Theta)}$ with $\mu(\Lambda)>0$, $$\mathbb{P}\{\vartheta(k)\in{\Lambda}\}>0,\ \forall k\in\mathbb{N}.$$
Thereby, we assume that Assumption \ref{Asspositive} are
prior conditions before giving the BRL.
We also mention that when $\Theta$ is specialized as a countably infinite set
$\mathbb{N}^{+}$,
the Assumption \ref{Asspositive} are retrieved; in this case, $(i)$ and $(ii)$
correspond to the initial distribution $\mathbb{P}(\vartheta_0=i)>0, i\in\mathbb{N}^{+}$ being positive and the transition probability matrix $\mathbf{P}$ being non-degenerate, respectively.
Furthermore, for any $i\in\mathbb{N}^{+}$ and  $k\in\mathbb{N},$ $\mathbb{P}(\vartheta(k)=i)>0$ directly follows from $\mathbb{P}(\vartheta(k)=i)=\mathbb{P}(\vartheta_0=i) \mathbf{P}^{k}$.
\end{remark}

We proceed with showing the BRL, which describes the internal stability and the $H_{\infty}$ norm of MJLS \eqref{414SGsystem} in terms of either the existence of the stabilizing solution of the coupled-AREs or, equivalently, the solvability of the coupled nonlinear inequalities.

\begin{lemma}\label{lemmaBRL}
Given any $\gamma>0$, the following are equivalent:\\
$(i)$~ MJLS \eqref{414SGsystem} is internally stable
and satisfies $\|\mathfrak{L}\|_{\infty}<\sqrt{\gamma}$;\\
$(ii)$~There exists a unique stabilizing solution (see Definition 4 in \cite{Xiao2023}) $\bar{P}\in\mathcal{H}_{\infty}^{n+}$ to the coupled-AREs
\begin{equation}\label{lmiARE}
\bar{P}(\ell)=\Psi_{1}(\bar{P})(\ell)
-\Psi_{2}(\bar{P})(\ell)
[-\Psi_{3}^{\gamma}(\bar{P})(\ell)]^{-1}
\Psi_{2}(\bar{P})(\ell)^{T}
\end{equation}
such that
$\Psi_{3}^{\gamma}(\bar{P})(\ell)\gg0\ \mu\text{-}a.e.;$\\
$(iii)$~There exists a solution $\hat{P}\in\mathcal{H}_{\infty}^{n+*}$ satisfying
\begin{equation}\label{LMI741}
\hat{P}(\ell)-\Psi_{1}(\hat{P})(\ell)-\Psi_{2}(\hat{P})(\ell)
\Psi_{3}^{\gamma}(\hat{P})(\ell)^{-1}\Psi_{2}(\hat{P})(\ell)^{T}\gg 0
\end{equation}
such that
$\Psi_{3}^{\gamma}(\hat{P})(\ell)\gg 0\ \mu\text{-}a.e..$\\
Moreover, if items $(i)$-$(iii)$ hold, then
$\hat{P}-\bar{P}\in\mathcal{H}_{\infty}^{n+*}.$
\end{lemma}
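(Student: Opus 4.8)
The plan is to establish $(ii)\Rightarrow(iii)$ and $(iii)\Rightarrow(i)$, taking the equivalence $(i)\Leftrightarrow(ii)$ from the coupled-ARE bounded real lemma of \cite{Xiao2023}; this closes the cycle $(i)\Leftrightarrow(ii)\Rightarrow(iii)\Rightarrow(i)$. The common algebraic device is Proposition \ref{Schurcom}: since the sign condition $\Psi_3^{\gamma}(P)(\ell)\gg0$ is imposed in both $(ii)$ and $(iii)$, the strict inequality \eqref{LMI741} is equivalent, $\mu$-$a.e.$, to positive definiteness of the augmented block
\[
M(P)(\ell):=\begin{bmatrix}P(\ell)-\Psi_1(P)(\ell) & -\Psi_2(P)(\ell)\\ -\Psi_2(P)(\ell)^{T} & \Psi_3^{\gamma}(P)(\ell)\end{bmatrix}\gg0,
\]
where Assumption \ref{Assumption1}$(iii)$ (that is, $C^{T}D=0$) is precisely what removes the off-diagonal contribution of the output channel. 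I would first record that $M(P)\in\mathcal{SH}^{n+r}_{\infty}$ whenever $P\in\mathcal{SH}^{n}_{\infty}$, using the boundedness of $\Psi_1,\Psi_2,\Psi_3^{\gamma}$ noted after \eqref{371opFbar}, and abbreviate the associated quadratic form by $\mathcal Q_P(\ell,x,v):=[\,x^{T}\ v^{T}\,]\,M(P)(\ell)\,[\,x^{T}\ v^{T}\,]^{T}$.

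For $(iii)\Rightarrow(i)$ I would run a stochastic dissipation argument. Putting $V(k):=x(k)^{T}\hat P(\vartheta(k))x(k)$, the Markov property together with Assumption \ref{assMarkov} gives $\mathbf E[\hat P(\vartheta(k+1))\,|\,\mathfrak F_k]=\mathcal E(\hat P)(\vartheta(k))$, so a direct expansion of \eqref{414SGsystem} yields
\[
\mathbf E[V(k+1)\,|\,\mathfrak F_k]-V(k)+\|z(k)\|_{\mathbb R^{r}}^{2}-\gamma\|v(k)\|_{\mathbb R^{r}}^{2}=-\mathcal Q_{\hat P}(\vartheta(k),x(k),v(k)).
\]
Because $M(\hat P)\gg0$, there is $\xi>0$ with the right-hand side bounded above by $-\xi(\|x(k)\|_{\mathbb R^{n}}^{2}+\|v(k)\|_{\mathbb R^{r}}^{2})$, $\mu$-$a.e.$ Taking $v\equiv0$ and using $\hat P\in\mathcal H_{\infty}^{n+*}$ (so that $V(k)$ is squeezed between positive multiples of $\|x(k)\|_{\mathbb R^{n}}^{2}$) delivers EMSS of $(A|\mathbb G)$ through the Lyapunov characterization of stability in \cite{Xiao2024}; summing the identity from $k=0$ with $x_0=0$ and letting the horizon tend to infinity gives $J_\gamma(0,\vartheta_0,v)\le-\xi\|v\|_{l^{2}(\mathbb N;\mathbb R^{r})}^{2}<0$ for every nonzero $v$, hence $\|\mathfrak L\|_\infty<\sqrt\gamma$.

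For $(ii)\Rightarrow(iii)$ I would pass through $(i)$ and perturb the output. Augmenting $C$ to $\tilde C=[\,C^{T}\ \sqrt\epsilon\,I\,]^{T}$ and $D$ to $\tilde D=[\,D^{T}\ 0\,]^{T}$ preserves $\tilde C^{T}\tilde D=0$ and changes the input-to-output operator only by a term of order $\sqrt\epsilon$, which is bounded because internal stability makes the input-to-state operator bounded; hence the perturbed $H_\infty$ norm still lies below $\sqrt\gamma$ for small $\epsilon$. Applying $(i)\Leftrightarrow(ii)$ (valid for outputs of arbitrary dimension) to the augmented data produces a stabilizing $\hat P\ge0$ of the corresponding ARE, which, because $\tilde C^{T}\tilde C=C^{T}C+\epsilon I$ while $B,\tilde\Psi_2,\tilde\Psi_3^{\gamma}$ are unchanged, reads $\hat P-\Psi_1(\hat P)-\Psi_2(\hat P)\Psi_3^{\gamma}(\hat P)^{-1}\Psi_2(\hat P)^{T}=\epsilon I\gg0$ with $\Psi_3^{\gamma}(\hat P)\gg0$; the same equation forces $\hat P\ge\epsilon I\gg0$ since $\Psi_1(\hat P)\ge0$ and $\Psi_2(\hat P)\Psi_3^{\gamma}(\hat P)^{-1}\Psi_2(\hat P)^{T}\ge0$, so $\hat P\in\mathcal H_{\infty}^{n+*}$. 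This is exactly $(iii)$.

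Finally, for $\hat P-\bar P\in\mathcal H_{\infty}^{n+*}$ I would complete squares. The condition $\Psi_3^{\gamma}(P)(\ell)\gg0$ makes $v\mapsto\mathcal Q_P(\ell,x,v)$ convex with $\min_v\mathcal Q_P(\ell,x,v)=x^{T}[P-\mathcal R^{\gamma}(P)](\ell)x$ attained at $v=\mathcal F^{\gamma}(P)(\ell)x$, where $\mathcal R^{\gamma}$ denotes the right-hand side of \eqref{lmiARE}. Using $\bar P-\mathcal R^{\gamma}(\bar P)=0$, subtracting $\mathcal Q_{\bar P}$ from $\mathcal Q_{\hat P}$ and evaluating at the $\bar P$-optimal $v=\mathcal F^{\gamma}(\bar P)(\ell)x$ produces, with $\bar A_{cl}:=A+B\mathcal F^{\gamma}(\bar P)$ and $\Delta:=\hat P-\bar P$, the Lyapunov inequality $\Delta-\mathcal T_{\bar A_{cl}}(\Delta)\ge\xi I\gg0$, where $\mathcal T_{\bar A_{cl}}$ is \eqref{OpTA355} with $A$ replaced by $\bar A_{cl}$. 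Since $\bar P$ is stabilizing, $(\bar A_{cl}|\mathbb G)$ is EMSS, so $\mathcal T_{\bar A_{cl}}$ has spectral radius below one and $(I-\mathcal T_{\bar A_{cl}})^{-1}=\sum_{k\ge0}\mathcal T_{\bar A_{cl}}^{k}$ is a bounded positivity-preserving operator on $\mathcal{SH}^{n}_{\infty}$; applying it gives $\Delta\gg0$. I expect the main obstacle to be precisely this last step: certifying in the Borel setting that the closed-loop Lyapunov operator has a positivity-preserving bounded resolvent, and that every object built along the way (the feedback $\mathcal F^{\gamma}(\bar P)$, the Neumann series, and the perturbed stabilizing solution) stays measurable, essentially bounded, and uniformly sign-definite $\mu$-$a.e.$ rather than merely pointwise.
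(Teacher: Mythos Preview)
Your proposal is correct; the main difference from the paper lies in how you close the loop at $(iii)$. The paper's Appendix~\ref{AppB} also imports $(i)\Leftrightarrow(ii)$ from \cite{Xiao2023}, but instead of your direct $(iii)\Rightarrow(i)$ Lyapunov/dissipation argument it proves $(iii)\Rightarrow(ii)$: it sets $\check P:=-\hat P\in\mathcal H_\infty^{n-*}$, uses Proposition~\ref{Schurcom} to rewrite the strict inequality as a $2\times2$ block condition, and then invokes Corollary~2 of \cite{Xiao2024} (an external existence result for stabilizing solutions of the coupled-AREs) to obtain $(ii)$. Your route avoids that black box and is self-contained, at the price of having to verify the uniform input-to-state bound needed to propagate the $H_\infty$ margin under the $\sqrt\epsilon$ output perturbation (this is exactly Corollary~2 of \cite{Xiao2023}, already cited in the preliminaries). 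For $(i)\Rightarrow(iii)$ and for $\hat P-\bar P\in\mathcal H_\infty^{n+*}$ the paper does not give an argument at all: it simply refers to the finite/LTI analogues in \cite{BookDragan2010} and \cite{Souza1992SCL}. Your output-augmentation construction and your completing-the-square/closed-loop-Lyapunov comparison are precisely the Borel-space incarnations of those classical arguments, and your closing remark correctly identifies where the extra care is needed in this setting (measurability and essential boundedness of $\mathcal F^{\gamma}(\bar P)$, and that EMSS of $(\bar A_{cl}\,|\,\mathbb G)$ forces $r_\sigma(\mathcal T_{\bar A_{cl}})<1$ so the Neumann series converges in $\mathcal{SH}_\infty^n$). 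One small sign: the $\bar P$-optimal disturbance is $v^*=[\Psi_3^\gamma(\bar P)]^{-1}\Psi_2(\bar P)^T x=-\mathcal F^\gamma(\bar P)x$, so make sure your $\bar A_{cl}$ is the closed loop that the stabilizing-solution definition actually declares EMSS; the algebra in your comparison step is unaffected.
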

\begin{proof}
See Appendix \ref{AppB}.
\end{proof}

\section{The Small Gain Theorem}\label{SecSmallGain}
The main goal of this section is to establish the small gain theorem. Before that, we first give the following result, which is fundamental for the derivation of the small gain theorem.
\begin{proposition}\label{usedinSMG}
If  MJLS \eqref{414SGsystem} is internally stable and
the input-to-output operator associated with MJLS \eqref{414SGsystem} satisfies $\|\mathfrak{L}\|_{\infty}<1$, then the input-to-output operator $(\mathfrak{I}-\mathfrak{L})$ is invertible and the state space realization of $(\mathfrak{I}-\mathfrak{L})^{-1}$ is internally stable. Here, $\mathfrak{I}$ is the identity element of $\mathbf{B}(l^{2}(\mathbb{N};\mathbb{R}^{r}),
l^{2}(\mathbb{N};\mathbb{R}^{r})).$
\end{proposition}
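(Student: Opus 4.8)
The plan is to treat the two assertions separately: the invertibility of $\mathfrak{I}-\mathfrak{L}$ is an operator-theoretic fact, while the internal stability of the realization of $(\mathfrak{I}-\mathfrak{L})^{-1}$ is obtained from the BRL together with a Lyapunov-type estimate. First I would settle invertibility. Since the $H_{\infty}$ norm of $\mathfrak{L}$ coincides with its $l^{2}$-induced operator norm on $l^{2}(\mathbb{N};\mathbb{R}^{r})$, the hypothesis $\|\mathfrak{L}\|_{\infty}<1$ gives $\|\mathfrak{L}\|_{\mathbf{B}(l^{2}(\mathbb{N};\mathbb{R}^{r}),l^{2}(\mathbb{N};\mathbb{R}^{r}))}<1$. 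Hence the Neumann series $\sum_{j=0}^{\infty}\mathfrak{L}^{j}$ converges in the Banach algebra $\mathbf{B}(l^{2}(\mathbb{N};\mathbb{R}^{r}),l^{2}(\mathbb{N};\mathbb{R}^{r}))$ and furnishes a bounded inverse $(\mathfrak{I}-\mathfrak{L})^{-1}$; this is the infinite-dimensional operator argument.

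Next I would construct the realization of $(\mathfrak{I}-\mathfrak{L})^{-1}$ and prove its internal stability. Applying Lemma \ref{lemmaBRL} with $\gamma=1$, internal stability together with $\|\mathfrak{L}\|_{\infty}<1=\sqrt{1}$ yields, by item $(iii)$, some $\hat{P}\in\mathcal{H}_{\infty}^{n+*}$ satisfying \eqref{LMI741} with $\Psi_{3}^{1}(\hat{P})(\ell)\gg 0$. Because $\mathcal{T}_{B}(\hat{P})(\ell)\geq 0$, the definition of $\Psi_{3}^{1}$ forces $I-D(\ell)^{T}D(\ell)=\Psi_{3}^{1}(\hat{P})(\ell)+\mathcal{T}_{B}(\hat{P})(\ell)\geq\Psi_{3}^{1}(\hat{P})(\ell)\gg 0$, so $\|D(\ell)\|_{\mathbb{R}^{r\times r}}<1$ and $I-D(\ell)$ is invertible for $\mu$-almost all $\ell$, with $(I-D)^{-1}\in\mathcal{H}_{\infty}^{r\times r}$. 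Eliminating $v$ from $w=(\mathfrak{I}-\mathfrak{L})v=v-z$ in \eqref{414SGsystem} then exhibits a realization of $(\mathfrak{I}-\mathfrak{L})^{-1}$ on the same state $x$ whose state matrix is $\tilde{A}(\ell)=A(\ell)+B(\ell)(I-D(\ell))^{-1}C(\ell)\in\mathcal{H}_{\infty}^{n\times n}$; internal stability of this realization is precisely the EMSS of $(\tilde{A}\,|\,\mathbb{G})$.

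To establish EMSS I would use the storage function $\hat{V}(k)=x(k)^{T}\hat{P}(\vartheta(k))x(k)$. Conditioning on $\mathfrak{F}_{k}$ and using $\mathbf{E}\{\hat{P}(\vartheta(k+1))\,|\,\mathfrak{F}_{k}\}=\mathcal{E}(\hat{P})(\vartheta(k))$, a completion of squares based on \eqref{LMI741} and on $C(\ell)^{T}D(\ell)=0$ (Assumption \ref{Assumption1}$(iii)$) shows that, along the dynamics of \eqref{414SGsystem} driven by any $v$, $\mathbf{E}\{\hat{V}(k+1)\,|\,\mathfrak{F}_{k}\}-\hat{V}(k)\leq\|v(k)\|_{\mathbb{R}^{r}}^{2}-\|z(k)\|_{\mathbb{R}^{r}}^{2}-\xi\|x(k)\|_{\mathbb{R}^{n}}^{2}$ for some $\xi>0$. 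Specializing to the autonomous closed loop (input $w=0$), one has $v(k)=(I-D)^{-1}C x(k)$ and hence $z(k)=v(k)$, so the input--output terms cancel and $\mathbf{E}\{\hat{V}(k+1)\,|\,\mathfrak{F}_{k}\}\leq\hat{V}(k)-\xi\|x(k)\|_{\mathbb{R}^{n}}^{2}$. The uniform bounds $\lambda I\leq\hat{P}(\ell)\leq\Lambda I$ coming from $\hat{P}\in\mathcal{H}_{\infty}^{n+*}$ give $\mathbf{E}\{\hat{V}(k+1)\,|\,\mathfrak{F}_{k}\}\leq(1-\xi/\Lambda)\hat{V}(k)$; iterating and taking expectations yields $\mathbf{E}\{\|x(k)\|_{\mathbb{R}^{n}}^{2}\}\leq(\Lambda/\lambda)(1-\xi/\Lambda)^{k}\|x_{0}\|_{\mathbb{R}^{n}}^{2}$, which is EMSS.

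The routine part is the operator inversion; the main obstacle lies in the last two steps. Specifically, I would need to carry the completion of squares through the Borel setting so that measurability and essential boundedness of $(I-D)^{-1}$, of $\tilde{A}$, and of the gap $\xi$ are preserved $\mu$-a.e., and to verify that the closed-loop identity $z=v$ makes the cross terms cancel exactly, so that $\alpha=1-\xi/\Lambda\in(0,1)$ is a genuine constant rather than an $\ell$-dependent quantity.
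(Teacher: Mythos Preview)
Your proposal is correct and follows essentially the same route as the paper: establish $(I-D)^{-1}\in\mathcal{H}_{\infty}^{r\times r}$ from $\|\mathfrak{L}\|_{\infty}<1$, obtain the invertibility of $\mathfrak{I}-\mathfrak{L}$ from the operator bound, and then deduce internal stability of the closed-loop matrix $A+B(I-D)^{-1}C$ via a Lyapunov argument tied to the BRL. The only difference is presentational: the paper derives $I-D^{T}D\gg0$ by invoking Lemma~3 of \cite{Xiao2023} and then delegates the stability step to Theorem~3 of \cite{Xiao2024} together with the proof of Theorem~4.1 in \cite{Aberkane2015SIAM}, whereas you carry out the dissipation inequality and the contraction $\mathbf{E}\{\hat V(k+1)\mid\mathfrak{F}_k\}\le(1-\xi/\Lambda)\hat V(k)$ explicitly.
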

\begin{proof}
See Appendix \ref{AppenPro2}.
\end{proof}

Consider the systems
\begin{align}
&\left\{
\begin{array}{ll}
x_{1}(k+1)=A_{1}(\vartheta(k))x_{1}(k)+B_{1}(\vartheta(k))v_{1}(k), \\
z_{1}(k)=C_{1}(\vartheta(k))x_{1}(k),\ k\in\mathbb{N},
\end{array}
\right.
\label{575systemg1}\\
&\left\{
\begin{array}{ll}
x_{2}(k+1)=A_{2}(\vartheta(k))x_{2}(k)+B_{2}(\vartheta(k))v_{2}(k), \\
z_{2}(k)=C_{2}(\vartheta(k))x_{2}(k)+D_{2}(\vartheta(k))v_{2}(k).
\end{array}
\right. \label{583systemg2}
\end{align}


Suppose that MJLSs \eqref{575systemg1} and \eqref{583systemg2} are internally stable. Let $\mathfrak{L}_{1}$ and $\mathfrak{L}_{2}$ be the input-to-output operators  from $l^{2}(\mathbb{N};\mathbb{R}^{r})$ to $l^{2}(\mathbb{N};\mathbb{R}^{r})$ associated with MJLSs \eqref{575systemg1} and \eqref{583systemg2}, respectively.
\begin{figure}
\centering
\vspace{-5pt}
\includegraphics[width=0.15\textwidth]{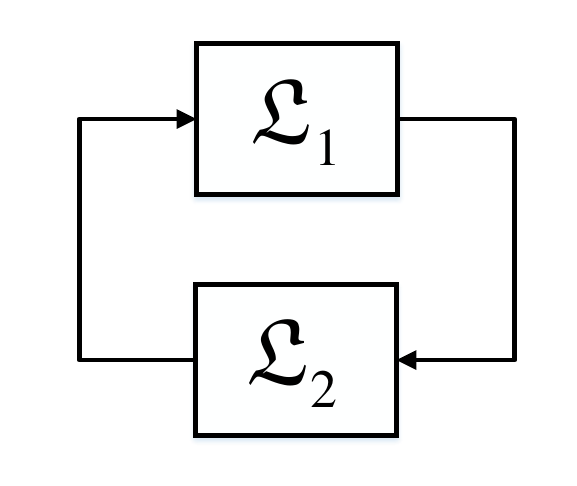}\\
\caption{The interconnection.}\label{InCo597}
\end{figure}
Next, we turn our attention to the interconnection in Fig. \ref{InCo597}, in which \eqref{575systemg1} and \eqref{583systemg2} are coupled by
$v_{2}=z_{1}$ and $v_{1}=z_{2}.$
For any $\ell\in\Theta$, define
$$
\hat{A}(\ell)=\left[
   \begin{array}{cc}
A_{1}(\ell)
+B_{1}(\ell)D_{2}(\ell)C_{1}(\ell)
 & B_{1}(\ell)C_{2}(\ell) \\
     B_{2}(\ell)C_{1}(\ell) & A_{2}(\ell) \\
   \end{array}
 \right].
 $$
The dynamics for the interconnection
is described as
\begin{equation}\label{InCo599}
\hat{x}(k+1)
=\hat{A}(\vartheta(k))\hat{x}(k),\ k\in\mathbb{N},
\end{equation}
where
$\hat{x}(k)=\left[
                  \begin{array}{cc}
                    x_{1}(k)^{T}&
                      x_{2}(k)^{T} \\
                  \end{array}
                \right]^{T}$.

We are now in a position to give the main result of this section, called the small gain theorem.
\begin{theorem}\label{smg}
If the following assumptions hold: \\
$(i)$~Systems \eqref{575systemg1} and \eqref{583systemg2} are internally stable;\\
$(ii)$~$\|\mathfrak{L}_{1}\|_{\infty} \|\mathfrak{L}_{2}\|_{\infty}<1.$\\
Then the interconnection \eqref{InCo599} is EMSS.
\end{theorem}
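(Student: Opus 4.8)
The plan is to realize the interconnection \eqref{InCo599} as the closed-loop (feedback) system generated by a single open-loop \emph{cascade} operator, and then to invoke Proposition \ref{usedinSMG}, which is tailored precisely to this situation. Concretely, I would regard system \eqref{575systemg1} followed by system \eqref{583systemg2} as a series connection: feed $v_{1}$ into \eqref{575systemg1}, use its output $z_{1}$ as the input $v_{2}$ of \eqref{583systemg2}, and take $z_{2}$ as the overall output. Writing $\mathfrak{L}:=\mathfrak{L}_{2}\mathfrak{L}_{1}$ for the resulting input-to-output operator on $l^{2}(\mathbb{N};\mathbb{R}^{r})$, the loop equation $v_{1}=z_{2}$ is exactly the feedback encoded by $\mathfrak{I}-\mathfrak{L}$, so that \eqref{InCo599} should coincide with the autonomous part of a state-space realization of $(\mathfrak{I}-\mathfrak{L})^{-1}$.

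First I would exhibit an explicit state-space realization of $\mathfrak{L}$. Substituting $v_{2}=z_{1}=C_{1}(\vartheta(k))x_{1}(k)$ into \eqref{583systemg2} and stacking the states as $(x_{1},x_{2})$ gives a cascade MJLS with system matrices
\begin{equation*}
A_{s}=\begin{bmatrix} A_{1} & 0 \\ B_{2}C_{1} & A_{2}\end{bmatrix},\quad B_{s}=\begin{bmatrix} B_{1} \\ 0 \end{bmatrix},\quad C_{s}=\begin{bmatrix} D_{2}C_{1} & C_{2}\end{bmatrix},\quad D_{s}=0,
\end{equation*}
all evaluated at $\vartheta(k)$. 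Since the diagonal blocks $(A_{1}|\mathbb{G})$ and $(A_{2}|\mathbb{G})$ are EMSS by hypothesis $(i)$ and $A_{s}$ is block lower triangular, I would argue that the cascade $(A_{s}|\mathbb{G})$ is itself internally stable: the $x_{1}$-subsystem decays exponentially in mean square on its own, while the $x_{2}$-subsystem is an EMSS system forced by the exponentially mean-square decaying signal $B_{2}C_{1}x_{1}$, so that $\mathbf{E}\|(x_{1}(k),x_{2}(k))\|^{2}$ also decays geometrically. Hence $\mathfrak{L}$ is a well-defined bounded input-to-output operator of an internally stable MJLS with zero initial state.

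Next I would use hypothesis $(ii)$. Because $\|\cdot\|_{\infty}$ coincides with the $l^{2}$-induced operator norm, it is submultiplicative, so $\|\mathfrak{L}\|_{\infty}=\|\mathfrak{L}_{2}\mathfrak{L}_{1}\|_{\infty}\le\|\mathfrak{L}_{2}\|_{\infty}\|\mathfrak{L}_{1}\|_{\infty}<1$. With the internal stability of the realization $(A_{s},B_{s},C_{s},0)$ just established and $\|\mathfrak{L}\|_{\infty}<1$, Proposition \ref{usedinSMG} applies and guarantees that $(\mathfrak{I}-\mathfrak{L})$ is invertible and that the state-space realization of $(\mathfrak{I}-\mathfrak{L})^{-1}$ is internally stable. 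The key algebraic check is that this realization has precisely the matrix $\hat{A}$ of \eqref{InCo599}: since $D_{s}=0$, the feedback is well posed and the realization of $(\mathfrak{I}-\mathfrak{L})^{-1}$ has state matrix $A_{s}+B_{s}(I-D_{s})^{-1}C_{s}=A_{s}+B_{s}C_{s}=\hat{A}$, as a direct block multiplication confirms. Therefore $(\hat{A}|\mathbb{G})$ is EMSS, which is exactly the claim.

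The main obstacle I anticipate is the internal-stability step for the cascade in the Borel setting: unlike the finite case, one cannot simply invoke a finite-dimensional spectral-radius argument, and I expect to need a Lyapunov-type or $l^{2}$-summability characterization of EMSS (in the spirit of the stability results of \cite{Xiao2023} and \cite{Li2012}) to certify that an EMSS subsystem driven by an exponentially mean-square decaying input retains geometric decay, all while preserving the measurability, boundedness, and integrability of the matrix-valued data. A secondary point requiring care is to confirm that the realization whose stability Proposition \ref{usedinSMG} asserts is indeed the natural feedback realization with matrix $\hat{A}$, rather than some other realization of the same operator; this is settled by the explicit identity $A_{s}+B_{s}C_{s}=\hat{A}$ together with the fact that $D_{s}=0$ makes the interconnection well posed.
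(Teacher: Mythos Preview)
Your proposal is correct and follows essentially the same route as the paper: realize the cascade $\mathfrak{L}_{2}\mathfrak{L}_{1}$ (the paper uses $\mathfrak{L}_{1}\mathfrak{L}_{2}$, which is immaterial by symmetry) as a block-triangular MJLS, verify that its closed-loop matrix under unity feedback is exactly $\hat{A}$, and invoke Proposition~\ref{usedinSMG}. The obstacle you flag---EMSS of the block-triangular cascade in the Borel setting---is precisely where the paper spends most of its effort, and it is resolved exactly as you anticipate: via Theorem~4 of \cite{Xiao2023} one obtains $P_{1},P_{2}\in\mathcal{H}^{n+*}_{\infty}$ satisfying strict Lyapunov inequalities for $(A_{1}|\mathbb{G})$ and $(A_{2}|\mathbb{G})$, and then a weighted sum $V_{1}+cV_{2}$ (with $c$ chosen large enough to dominate the cross term $B_{2}C_{1}$, bounded uniformly via the $\mathcal{H}_{\infty}$ norms of the data) serves as a Lyapunov function for the triangular system.
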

\begin{proof}
It can be easily seen from $(ii)$ that $\|\mathfrak{L}_{1}\mathfrak{L}_{2}\|_{\infty}<1.$
For any $\ell\in\Theta$,
define
$\hat{B}(\ell)=
\left[
  \begin{array}{cc}
    [B_{1}(\ell)D_{2}(\ell)]^{T} & B_{2}(\ell)^{T} \\
  \end{array}
\right]^{T}.$
If the state space representation of $\mathfrak{L}_{1}\mathfrak{L}_{2}$:
\begin{equation}\label{694nistat}
\left\{
\begin{array}{ll}
\begin{split}
\hat{x}(k+1)
=&\left[
   \begin{array}{cc}
  A_{1}(\vartheta(k))
  & B_{1}(\vartheta(k))C_{2}(\vartheta(k)) \\
    0 & A_{2}(\vartheta(k)) \\
   \end{array}
 \right]
 \hat{x}(k)\\
& +\hat{B}(\vartheta(k))v_{2}(k),
\end{split}\\
z(k)=C_{1}(\vartheta(k))x_{1}(k),\  k\in\mathbb{N}
\end{array}
\right.
\end{equation}
is internally stable, applying Proposition \ref{usedinSMG}, one can get that the state space realization of $(\mathfrak{I}-\mathfrak{L}_{1}\mathfrak{L}_{2})^{-1}$, given as
\begin{equation}\label{694nistatre}
\left\{
\begin{array}{ll}
\hat{x}(k+1)
=\hat{A}(\vartheta(k))\hat{x}(k)+\hat{B}(\vartheta(k))v(k), \\
z(k)=C_{1}(\vartheta(k))x_{1}(k)+v(k),\  k\in\mathbb{N},
\end{array}
\right.
\end{equation}
is internally stable.
Note that system \eqref{694nistatre} being internally stable is equivalent to system \eqref{InCo599} being EMSS.
To obtain the desired result,
we will prove that system \eqref{694nistat} is internally stable, i.e.,
system
\begin{equation}\label{check736}
\hat{x}(k+1)
=\left[
   \begin{array}{cc}
 A_{1}(\vartheta(k))
 & B_{1}(\vartheta(k))C_{2}(\vartheta(k)) \\
    0 & A_{2}(\vartheta(k)) \\
   \end{array}
 \right]
 \hat{x}(k)
\end{equation}
is EMSS.
Since systems \eqref{575systemg1} and \eqref{583systemg2} are internally stable, according to Theorem 4 in \cite{Xiao2023}, there exist $\xi_1>0$, $\xi_2>0$, $P_{1}\in\mathcal{H}^{n+*}_{\infty}$, and $P_{2}\in\mathcal{H}^{n+*}_{\infty}$ such that
\begin{equation}\label{743P1}
\mathcal{T}_{A_{1}}(P_{1})(\ell)-P_{1}(\ell)\leq -\xi_{1} I\  \mu\text{-}a.e.
\end{equation}
and
\begin{equation}\label{747P2}
\mathcal{T}_{A_{2}}(P_{2})(\ell)-P_{2}(\ell)\leq -\xi_{2} I\  \mu\text{-}a.e.,
\end{equation}
where $\mathcal{T}_{A_{1}}$ and $\mathcal{T}_{A_{2}}$ are given by \eqref{OpTA355} with $A=A_{1}$ and $A=A_{2}$, respectively.
Set $V_{1}(k,x_{1}(k),\vartheta(k))$ $=x_{1}(k)^{T}P_{1}(\vartheta(k))x_{1}(k)$ and $V_{2}(k,x_{2}(k),\vartheta(k))=x_{2}(k)^{T}P_{2}(\vartheta(k))x_{2}(k).$
In view of $$\mathbf{E}\left\{P_{1}(\vartheta(k+1))|\mathfrak{F}_{k}\right\}
=\mathcal{E}(P_{1})(\vartheta(k))\ a.s.,$$
one can establish that
\begin{equation*}
\begin{split}
&\mathbf{E}\left\{V_{1}(k+1,x_{1}(k+1),\vartheta(k+1))
-V_{1}(k,x_{1}(k),\vartheta(k))\right\}\\
&=\mathbf{E}\{ x_{1}(k)^{T}[\mathcal{T}_{A_{1}}(P_{1})(\vartheta(k))-P_{1}(\vartheta(k))]x_{1}(k) \\& +2x_{1}(k)^{T}A_{1}(\vartheta(k))^{T}\mathcal{E}(P_{1})(\vartheta(k))
B_{1}(\vartheta(k))C_{2}(\vartheta(k))x_{2}(k)\\
&+x_{2}(k)^{T}C_{2}(\vartheta(k))^{T}B_{1}(\vartheta(k))^{T}
\mathcal{E}(P_{1})(\vartheta(k))B_{1}(\vartheta(k))\\
&\ \ \cdot C_{2}(\vartheta(k))x_{2}(k)
\}.
\end{split}
\end{equation*}
Furthermore, for any $\zeta_{0}>0$, it holds that
\begin{equation*}
\begin{split}
&\mathbf{E}\{2x_{1}(k)^{T}A_{1}(\vartheta(k))^{T}\mathcal{E}(P_{1})(\vartheta(k))
B_{1}(\vartheta(k))C_{2}(\vartheta(k))x_{2}(k)\}\\
&\leq\mathbf{E}\{\zeta_{0}^{-1}\|A_{1}(\vartheta(k))^{T}\mathcal{E}(P_{1})(\vartheta(k))
B_{1}(\vartheta(k))C_{2}(\vartheta(k))\|^{2}_{\mathbb{R}^{n\times n}}\\
&\ \ \ \cdot \|x_{2}(k)\|^{2}_{\mathbb{R}^{n}}
+\zeta_{0}\|x_{1}(k)\|^{2}_{\mathbb{R}^{n}}
\}.
\end{split}
\end{equation*}
Additionally, from the inequality
$\|\mathcal{E}(P_{1})(\ell)\|_{\mathbb{R}^{n\times n}}
\leq \|P_{1}\|_{\mathcal{H}_{\infty}^{n \times n}}\ \mu\text{-}a.e.,$
it follows that
$$\|A_{1}(\ell)^{T}\mathcal{E}(P_{1})(\ell)
B_{1}(\ell)C_{2}(\ell)\|^{2}_{\mathbb{R}^{n\times n}}
\leq \zeta_{1}^{2}\ \mu\text{-}a.e.$$
and
$$\|C_{2}(\ell)^{T}B_{1}(\ell)^{T}
\mathcal{E}(P_{1})(\ell)B_{1}(\ell)C_{2}(\ell)\|_{\mathbb{R}^{n\times n}}\leq \zeta_{2} \ \mu\text{-}a.e.,$$
where $\zeta_{1}=\|A_{1}\|_{\mathcal{H}^{n\times n}_{\infty}}\|B_{1}\|_{\mathcal{H}^{n\times r}_{\infty}}\|C_{2}\|_{\mathcal{H}^{r\times n}_{\infty}}
\|P_{1}\|_{\mathcal{H}^{n\times n}_{\infty}}$ and
$\zeta_{2}=\|B_{1}\|_{\mathcal{H}^{n\times r}_{\infty}}^{2}\|C_{2}\|_{\mathcal{H}^{r\times n}_{\infty}}^{2}
\|P_{1}\|_{\mathcal{H}^{n\times n}_{\infty}}.$
Through the obtained relations, we arrive at
\begin{equation}\label{V1cha734}
\begin{split}
&\mathbf{E}\left\{V_{1}(k+1,x_{1}(k+1),\vartheta(k+1))
-V_{1}(k,x_{1}(k),\vartheta(k))\right\}\\
&\leq (\zeta_{2}+\zeta_{0}^{-1}\zeta_{1}^{2})\mathbf{E}\{\|x_{2}(k)\|^{2}_{\mathbb{R}^{n}}\}
-(\xi_{1}-\zeta_{0})\mathbf{E}\{\|x_{1}(k)\|^{2}_{\mathbb{R}^{n}}\}.
\end{split}
\end{equation}
Similarly, from \eqref{747P2} one can get that
\begin{align}
&\mathbf{E}\left\{V_{2}(k+1,x_{2}(k+1),\vartheta(k+1))
-V_{2}(k,x_{2}(k),\vartheta(k))\right\} \nonumber \\
&\leq -\xi_{2}\mathbf{E}\{\|x_{2}(k)\|^{2}_{\mathbb{R}^{n}}\}.
\label{V2cha744}
\end{align}
Next, take $0<\zeta_{0}<\xi_{1}$ and let $V(k,x_{1}(k),x_{2}(k),\vartheta(k))
=V_{1}(k,x_{1}(k),\vartheta(k))+\frac{2
(\zeta_{2}+\zeta_{0}^{-1}\zeta_{1}^{2})}{\xi_2}
V_{2}(k,x_{2}(k),\vartheta(k)).$
Combining \eqref{V1cha734} with \eqref{V2cha744} gives
\begin{align*}
&\mathbf{E}\{V(k+1,x_{1}(k+1),x_{2}(k+1),\vartheta(k+1))\\
&\ \ \ \ -V(k,x_{1}(k),x_{2}(k),\vartheta(k))\}\\
&\leq-(\xi_{1}-\zeta_{0})\mathbf{E}\{
\|x_{1}(k)\|^{2}_{\mathbb{R}^{n}}\}\\
&\ \ \ -(\zeta_{2}+\zeta_{0}^{-1}\zeta_{1}^{2})\mathbf{E}\{\|x_{2}(k)\|^{2}_{\mathbb{R}^{n}}\}.
\end{align*}
 We know that there exist
$\xi_3>0$ and $\xi_4>0$ such that
$\xi_3 I\leq P_{1}(\ell)\leq\|P_{1}\|_{\mathcal{H}^{n\times n}_{\infty}}I \ \mu\text{-} a.e.$
and
$\xi_4 I\leq P_{2}(\ell)\leq \|P_{2}\|_{\mathcal{H}^{n\times n}_{\infty}}I\ \mu\text{-} a.e.$ via observing $P_{1}\in\mathcal{H}^{n+*}_{\infty}$ and $P_{2}\in\mathcal{H}^{n+*}_{\infty}$.
Then it follows that
\begin{equation}\label{lyV769}
\begin{aligned}
&\mathbf{E}\left\{V(k+1,x_{1}(k+1),x_{2}(k+1),\vartheta(k+1))\right\}\\
&\leq (1-\bar{\alpha})\mathbf{E}\left\{V(k,x_{1}(k),x_{2}(k),\vartheta(k))\right\},
\end{aligned}
\end{equation}
where
$\bar{\alpha}=\min\left\{\frac{\xi_{1}-\zeta_{0}}
{\|P_{1}\|_{\mathcal{H}^{n\times n}_{\infty}}},\frac{\xi_2}{2\|P_{2}\|_{\mathcal{H}^{n\times n}_{\infty}}}\right\}.$
Moreover, it can be seen from \eqref{743P1} and \eqref{747P2} that
$\bar{\alpha}\in(0,1).$
Setting $\alpha=1-\bar{\alpha}$ and then recursively applying \eqref{lyV769}, one obtains that
\begin{align*}
&\mathbf{E}\left\{V(k,x_{1}(k),x_{2}(k),\vartheta(k))\right\}\\
&\leq\alpha^{k}\mathbf{E}\left\{V(0,x_{1}(0),x_{2}(0),\vartheta(0))\right\}\\
&\leq\bar{\beta}\alpha^{k}[\|x_{1}(0)\|_{\mathbb{R}^{n}}^{2}+\|x_{2}(0)\|_{\mathbb{R}^{n}}^{2}],
\end{align*}
where
$\bar{\beta}=\max\left\{\|P_{1}\|_{\mathcal{H}^{n\times n}_{\infty}},\frac{2(\zeta_{2}+\zeta_{0}^{-1}\zeta_{1}^{2})}{\xi_2}\|P_{2}\|_{\mathcal{H}^{n\times n}_{\infty}}
\right\}.$
This yields
$$\mathbf{E}\left\{\|x_{1}(k)\|^{2}_{\mathbb{R}^{n}}
+\|x_{2}(k)\|^{2}_{\mathbb{R}^{n}}\right\}
\leq \beta\alpha^{k}[\|x_{1}(0)\|_{\mathbb{R}^{n}}^{2}
+\|x_{2}(0)\|_{\mathbb{R}^{n}}^{2}],
$$
where $\beta=\frac{\bar{\beta}}{\hat{\beta}},$ $\hat{\beta}=\min\{\xi_{3},\frac{2(\zeta_{2}
+\zeta_{0}^{-1}\zeta_{1}^{2})\xi_4}{\xi_2}\},$
 and  $\alpha\in(0,1).$
Thus, system \eqref{check736} is EMSS, and the desired result is obtained.
\end{proof}

\begin{remark}
Theorem \ref{smg} can be viewed as an extended version of the small gain theorem established in \cite{Aberkane2015SIAM}, specifically tailored for MJLSs with the Markov chain on a Borel space. Moreover, Theorem \ref{smg} is still valid for the case where the output of system \eqref{575systemg1} is constructed as $z_{1}(k)=C_{1}(\vartheta(k))x_{1}(k)+D_{1}(\vartheta(k))v_{1}(k).$ In this scenario, by Theorem A.4.9 in \cite{Curtain1995}, $(\mathcal{I}-D_{1}D_{2})^{-1}\in{\mathcal{H}^{r\times r}_{\infty}}$ is naturally satisfied, as guaranteed by assumption $(ii)$ of Theorem \ref{smg}.
\end{remark}

\section{Stability Radii}\label{SecStabilityRadii}
In this section, we will study the stability radius of uncertain MJLSs, which plays the role of the robustness measure of stability. It will be shown that a lower bound of the stability radius can be derived as an application of the established small gain theorem.

Consider the system with parametric uncertainties as follows:
\begin{equation}\label{disturb906}
x(k+1)=[A(\vartheta(k))+B(\vartheta(k))\Delta(\vartheta(k)) C(\vartheta(k))]x(k),
\end{equation}
$\ k\in\mathbb{N},$
where $B, C$ model the available information about
the structure and scale of the uncertainty, and $\Delta$ represents the unknown part of
the admissible uncertainty, that is, $\Delta=\{\Delta(\ell)\}_{\ell\in\Theta}\in\mathcal{H}^{r\times r}_{\infty}.$ In this section, the nominal system $(A|\mathbb{G})$ is assumed to be EMSS. System \eqref{disturb906} can be viewed as the perturbed model of the nominal system  $(A|\mathbb{G}),$  and we are interested in the range of the unknown perturbation $\Delta$ within which the system \eqref{disturb906}
can still maintain satisfactory stability.
To this end, the concept of the stability radius is put forward, interpreted as the norm of the maximum perturbation $\Delta$ such that the perturbed systems are stable. Next, the formal definition of the stability radius is presented.
\begin{definition}
The stability radius of $(A|\mathbb{G})$ w.r.t. parametric uncertainties as in \eqref{disturb906}, is
$
r(A;B,C|\mathbb{G})=\inf\{\|\Delta\|_{\mathcal{H}^{r\times r}_{\infty}}|\Delta\in\mathcal{H}^{r\times r}_{\infty}, \text{ \eqref{disturb906} is not EMSS}\}.
$
\end{definition}

Consider MJLS \eqref{414SGsystem} with $C=\{\Delta(\ell)\}_{\ell\in\Theta}\in\mathcal{H}^{r\times n}_{\infty}$ defined as $C(\ell)=0,\ \ell\in\Theta.$ The resulting input-output operator is
$\mathfrak{L}_{C=0}(v)(k)=D(\vartheta(k))v(k),\ k\in\mathbb{N},$
whose $H_{\infty}$ norm can be calculated as in Proposition \ref{928radpro}.

\begin{proposition}\label{928radpro}
If $(A|\mathbb{G})$ is EMSS, then $\|\mathfrak{L}_{C=0}\|_{\infty}=\|D\|_{\mathcal{H}^{r\times r}_{\infty}}$.
\end{proposition}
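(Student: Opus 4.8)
The plan is to work directly from the definition of $\|\mathfrak{L}\|_{\infty}$ through the performance function $J_{\gamma}$ in \eqref{711J}, and to establish the equality by proving the two one-sided bounds separately. The starting observation is that with $C\equiv 0$ the output decouples from the state dynamics, so that $\mathfrak{L}_{C=0}(v)(k)=D(\vartheta(k))v(k)$ and hence $\|\mathfrak{L}_{C=0}(v)\|_{l^{2}(\mathbb{N};\mathbb{R}^{r})}^{2}=\sum_{k=0}^{\infty}\mathbf{E}(\|D(\vartheta(k))v(k)\|_{\mathbb{R}^{r}}^{2})$, which I will compare termwise against $\|v\|_{l^{2}(\mathbb{N};\mathbb{R}^{r})}^{2}=\sum_{k=0}^{\infty}\mathbf{E}(\|v(k)\|_{\mathbb{R}^{r}}^{2})$.

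For the upper bound, fix any $\gamma>\|D\|_{\mathcal{H}^{r\times r}_{\infty}}^{2}$. Since $\|D(\ell)\|_{\mathbb{R}^{r\times r}}\leq\|D\|_{\mathcal{H}^{r\times r}_{\infty}}$ holds $\mu$-a.e.\ and each $\vartheta(k)$ has a distribution absolutely continuous w.r.t.\ $\mu$ with density $\nu_{k}$, the estimate $\|D(\vartheta(k))v(k)\|_{\mathbb{R}^{r}}^{2}\leq\|D\|_{\mathcal{H}^{r\times r}_{\infty}}^{2}\|v(k)\|_{\mathbb{R}^{r}}^{2}$ holds $a.s.$; summing over $k$ yields $\|\mathfrak{L}_{C=0}(v)\|^{2}\leq\|D\|_{\mathcal{H}^{r\times r}_{\infty}}^{2}\|v\|^{2}<\gamma\|v\|^{2}$ for every nonzero $v$ and every $\vartheta_{0}$, so $J_{\gamma}(0,\vartheta_{0},v)<0$ and such $\gamma$ is admissible in the infimum. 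This gives $\|\mathfrak{L}_{C=0}\|_{\infty}\leq\|D\|_{\mathcal{H}^{r\times r}_{\infty}}$.

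The lower bound is the crux. Fix $\gamma<\|D\|_{\mathcal{H}^{r\times r}_{\infty}}^{2}$ and choose $\epsilon,\delta>0$ small enough that $(1-\delta)^{2}(\|D\|_{\mathcal{H}^{r\times r}_{\infty}}-\epsilon)^{2}>\gamma$. By definition of the essential supremum, the set $\Lambda_{\epsilon}:=\{\ell\in\Theta:\|D(\ell)\|_{\mathbb{R}^{r\times r}}>\|D\|_{\mathcal{H}^{r\times r}_{\infty}}-\epsilon\}$ satisfies $\mu(\Lambda_{\epsilon})>0$. The delicate point is to select an input direction that is simultaneously near-optimal for $D(\ell)$ and measurable in $\ell$: I will fix a finite $\delta$-net $\{w_{1},\dots,w_{m}\}$ of the unit sphere of $\mathbb{R}^{r}$ and set $w(\ell):=w_{j(\ell)}$, where $j(\ell)$ is the least index maximizing $\|D(\ell)w_{j}\|_{\mathbb{R}^{r}}$. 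Since each $\ell\mapsto\|D(\ell)w_{j}\|_{\mathbb{R}^{r}}$ is measurable, $w(\cdot)$ is Borel measurable, and the net property forces $\|D(\ell)w(\ell)\|_{\mathbb{R}^{r}}\geq(1-\delta)\|D(\ell)\|_{\mathbb{R}^{r\times r}}$ with $\|w(\ell)\|_{\mathbb{R}^{r}}=1$ for all $\ell$. Then I take the single-step input $v(0)=w(\vartheta_{0})\mathbf{1}_{\Lambda_{\epsilon}}(\vartheta_{0})$ (the indicator of $\Lambda_{\epsilon}$) and $v(k)=0$ for $k\geq 1$, which is $\mathfrak{F}_{0}$-measurable and hence admissible.

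With this input only the $k=0$ term survives, and using \eqref{ProMa206} I can rewrite both norms as integrals against $\nu_{0}$ over $\Lambda_{\epsilon}$, namely $\|\mathfrak{L}_{C=0}(v)\|^{2}=\int_{\Lambda_{\epsilon}}\|D(\ell)w(\ell)\|_{\mathbb{R}^{r}}^{2}\nu_{0}(\ell)\mu(d\ell)$ and $\|v\|^{2}=\int_{\Lambda_{\epsilon}}\nu_{0}(\ell)\mu(d\ell)$. Here Assumption \ref{Asspositive}$(i)$ together with Proposition \ref{341proposition} guarantees $\nu_{0}>0$ $\mu$-a.e., so that $\int_{\Lambda_{\epsilon}}\nu_{0}\,d\mu=\mathbb{P}\{\vartheta_{0}\in\Lambda_{\epsilon}\}>0$ and the input is genuinely nonzero. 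On $\Lambda_{\epsilon}$ the integrand obeys $\|D(\ell)w(\ell)\|_{\mathbb{R}^{r}}^{2}\geq(1-\delta)^{2}(\|D\|_{\mathcal{H}^{r\times r}_{\infty}}-\epsilon)^{2}>\gamma$, whence $\|\mathfrak{L}_{C=0}(v)\|^{2}>\gamma\|v\|^{2}$, i.e.\ $J_{\gamma}(0,\vartheta_{0},v)>0$; therefore no $\gamma<\|D\|_{\mathcal{H}^{r\times r}_{\infty}}^{2}$ is admissible, giving $\|\mathfrak{L}_{C=0}\|_{\infty}\geq\|D\|_{\mathcal{H}^{r\times r}_{\infty}}$, and combining the two bounds finishes the proof. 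I expect the measurable near-optimal selection of $w(\cdot)$ and the verification that $\vartheta_{0}$ charges $\Lambda_{\epsilon}$ with positive probability (exactly where the positivity in Assumption \ref{Asspositive} enters) to be the only nontrivial points; the remaining estimates are routine.
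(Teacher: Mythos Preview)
Your proof is correct, but it follows a genuinely different route from the paper's. The paper establishes the upper bound exactly as you do, but for the lower bound it argues by contradiction via the bounded real lemma (Lemma~\ref{lemmaBRL}): assuming $\|\mathfrak{L}_{C=0}\|_{\infty}<\|D\|_{\mathcal{H}^{r\times r}_{\infty}}$, it invokes Lemma~\ref{lemmaBRL} with $\gamma=\|D\|_{\mathcal{H}^{r\times r}_{\infty}}^{2}$, observes that (since $C=0$) $\bar{P}=0$ is the stabilizing solution of \eqref{lmiARE}, and then reads off from the sign condition $\Psi_{3}^{\gamma}(0)\gg 0$ that $D(\ell)^{T}D(\ell)\leq(\|D\|_{\mathcal{H}^{r\times r}_{\infty}}^{2}-\eta)I$ $\mu$-a.e.\ for some $\eta>0$, a contradiction. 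In fact the remark following the proposition explicitly says the direct construction used for finite MJLSs in \cite{Aberkane2015SIAM} was deemed ``limited due to the uncountable nature of the considered Borel space,'' which is why the authors resorted to the BRL.

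Your argument shows that the direct route is in fact available: the obstacle in the uncountable setting is the measurable selection of a near-optimal input direction $w(\ell)$, and you sidestep it neatly by restricting to a finite $\delta$-net of the unit sphere and taking the least-index maximizer, which is automatically Borel measurable. What you gain is an elementary, self-contained proof that does not rely on the BRL (a substantially deeper result); what the paper's route gains is brevity once the BRL is already in hand. One small remark: Assumption~\ref{Asspositive}$(i)$ already gives $\nu_{0}>0$ $\mu$-a.e.\ directly, so the appeal to Proposition~\ref{341proposition} is unnecessary at that step.
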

\begin{proof}
$\|\mathfrak{L}_{C=0}\|_{\infty}\leq \|D\|_{\mathcal{H}^{r\times r}_{\infty}}$ can be obtained by that
$\|\mathfrak{L}_{C=0}(v)\|_{l^{2}(\mathbb{N};\mathbb{R}^{r})}
\leq \|D\|_{\mathcal{H}^{r\times r}_{\infty}}{\|v\|_{l^{2}(\mathbb{N};\mathbb{R}^{r})}}$
holds for any $\vartheta_{0}$ and nonzero input  $v \in l^{2}(\mathbb{N};\mathbb{R}^{r}).$
We assert that $\|\mathfrak{L}_{C=0}\|_{\infty}< \|D\|_{\mathcal{H}^{r\times r}_{\infty}}$ does not hold. Otherwise, according to Lemma \ref{lemmaBRL}, since $(A|\mathbb{G})$ is EMSS, then $P=0$ is the  unique stabilizing solution of \eqref{lmiARE}, which yields that for some $\eta>0$,
$D(\ell)^{T}D(\ell)\leq \|D\|_{\mathcal{H}^{r\times r}_{\infty}}^{2}I-\eta I \ \mu\text{-}a.e..$
This leads to
$\|D\|_{\mathcal{H}^{r\times r}_{\infty}}^{2}\leq \|D\|_{\mathcal{H}^{r\times r}_{\infty}}^{2}-\eta$
with $\eta>0$.
It is apparently a contradiction, and the desired result is established.
\end{proof}
\begin{remark}
In Proposition 3.4 of \cite{Aberkane2015SIAM}, a result similar to Proposition \ref{928radpro} was established for finite MJLSs.
However, during the proof of Proposition \ref{928radpro}, the technique employed in \cite{Aberkane2015SIAM} is limited due to the uncountable nature of the considered Borel space. Therefore, we have adopted a proof by contradiction to achieve the desired result.
\end{remark}

In the following theorem, a lower bound of $r(A;B,C|\mathbb{G})$ is given.
\begin{theorem}\label{lowboundsta963}
Suppose that $(A|\mathbb{G})$ is EMSS.
Consider MJLS
\begin{equation}\label{D0973}
\left\{
\begin{array}{ll}
x(k+1)=A(\vartheta(k))x(k)+B(\vartheta(k))v(k), \\
z(k)=C(\vartheta(k))x(k),\ k\in\mathbb{N}.
\end{array}
\right.
\end{equation}
Its input-output operator is denoted by $\mathfrak{L}_{D=0}$. Then, $r(A;B,C|\mathbb{G})\geq \|\mathfrak{L}_{D=0}\|_{\infty}^{-1}.$
\end{theorem}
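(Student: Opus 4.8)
The plan is to prove the equivalent statement that small perturbations preserve stability: I will show that whenever $\|\Delta\|_{\mathcal{H}^{r\times r}_{\infty}}<\|\mathfrak{L}_{D=0}\|_{\infty}^{-1}$, the perturbed system \eqref{disturb906} is still EMSS. Granting this, every admissible $\Delta\in\mathcal{H}^{r\times r}_{\infty}$ that makes \eqref{disturb906} fail to be EMSS must satisfy $\|\Delta\|_{\mathcal{H}^{r\times r}_{\infty}}\geq\|\mathfrak{L}_{D=0}\|_{\infty}^{-1}$; taking the infimum over all such $\Delta$ in the definition of $r(A;B,C|\mathbb{G})$ then yields the stated lower bound. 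The driving tool for the preservation statement is the small gain theorem (Theorem \ref{smg}).

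The key step is to recast \eqref{disturb906} as the feedback interconnection of two internally stable MJLSs. I take \eqref{575systemg1} to be the realization of $\mathfrak{L}_{D=0}$, that is $A_{1}=A$, $B_{1}=B$, $C_{1}=C$, so that $\mathfrak{L}_{1}=\mathfrak{L}_{D=0}$; and I take \eqref{583systemg2} to be the purely static gain $\Delta$, that is $A_{2}=0$, $B_{2}=0$, $C_{2}=0$, $D_{2}=\Delta$, whose input-to-output operator is $\mathfrak{L}_{2}(v)(k)=\Delta(\vartheta(k))v(k)$. With these choices, and with the coupling $v_{2}=z_{1}$, $v_{1}=z_{2}$, the interconnection matrix $\hat{A}$ collapses to $\diag\{A+B\Delta C,\,0\}$, so \eqref{InCo599} decouples and its $x_{1}$-block is precisely $x_{1}(k+1)=[A(\vartheta(k))+B(\vartheta(k))\Delta(\vartheta(k))C(\vartheta(k))]x_{1}(k)$. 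Hence EMSS of \eqref{InCo599} is equivalent to EMSS of \eqref{disturb906}.

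It then remains to check the two hypotheses of Theorem \ref{smg}. System \eqref{575systemg1} is internally stable because $(A|\mathbb{G})$ is EMSS by assumption, and \eqref{583systemg2} is internally stable trivially since $A_{2}=0$. For the gain condition, $\|\mathfrak{L}_{1}\|_{\infty}=\|\mathfrak{L}_{D=0}\|_{\infty}$, while Proposition \ref{928radpro}, applied to the static realization (with vanishing state matrix and with the output gain $\Delta$ playing the role of $D$), gives $\|\mathfrak{L}_{2}\|_{\infty}=\|\Delta\|_{\mathcal{H}^{r\times r}_{\infty}}$. The assumed bound on $\|\Delta\|_{\mathcal{H}^{r\times r}_{\infty}}$ therefore forces $\|\mathfrak{L}_{1}\|_{\infty}\|\mathfrak{L}_{2}\|_{\infty}<1$, so Theorem \ref{smg} delivers EMSS of \eqref{InCo599}, hence of \eqref{disturb906}, completing the preservation step.

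I expect the delicate points to be bookkeeping rather than analysis. First, one must confirm that the degenerate static block $(A_{2},B_{2},C_{2},D_{2})=(0,0,0,\Delta)$ legitimately fits the setting behind Theorem \ref{smg} and Proposition \ref{928radpro}: membership of the data in the relevant $\mathcal{H}_{\infty}$ spaces, internal stability, and the structural condition $C_{2}^{T}D_{2}=0$ (which holds vacuously), so that $\mathfrak{L}_{2}$ is well defined and its $H_{\infty}$ norm is exactly $\|\Delta\|_{\mathcal{H}^{r\times r}_{\infty}}$. Second, the boundary case must be handled with care: Theorem \ref{smg} only guarantees EMSS under the strict inequality $\|\mathfrak{L}_{1}\|_{\infty}\|\mathfrak{L}_{2}\|_{\infty}<1$, which is precisely what renders the infimum in $r(A;B,C|\mathbb{G})$ bounded below by $\|\mathfrak{L}_{D=0}\|_{\infty}^{-1}$; the trivial case $\|\mathfrak{L}_{D=0}\|_{\infty}=0$ is subsumed under the convention that the bound is $+\infty$, since then the small gain condition holds for every finite-norm $\Delta$ and no perturbation destroys EMSS.
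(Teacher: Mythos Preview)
Your proposal is correct and follows essentially the same route as the paper's proof: both take \eqref{D0973} as system \eqref{575systemg1} and the static gain $z_{2}(k)=\Delta(\vartheta(k))v_{2}(k)$ as system \eqref{583systemg2}, then invoke Proposition \ref{928radpro} to compute $\|\mathfrak{L}_{2}\|_{\infty}=\|\Delta\|_{\mathcal{H}^{r\times r}_{\infty}}$ and apply Theorem \ref{smg}. Your write-up is more explicit than the paper's terse proof (you spell out that $\hat{A}=\diag\{A+B\Delta C,0\}$, verify the hypotheses for the degenerate static block, and handle the strict-inequality/infimum bookkeeping), but the underlying argument is identical.
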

\begin{proof}
We view  \eqref{D0973} and system $z_{2}=\Delta(\vartheta(k))v_{2}(k)$ as systems \eqref{575systemg1} and \eqref{583systemg2}, respectively.
The proof can be completed by combining Theorem \ref{smg} and Proposition \ref{928radpro}.
\end{proof}
\begin{remark}
Another way to interpret Theorem \ref{lowboundsta963} is
from the perspective of the output feedback control. To be precise, note that one can write the uncertain system \eqref{disturb906} as
system \eqref{D0973} with control $v(k)=\Delta(\vartheta(k))z(k),\  k\in\mathbb{N}.$
Theorem \ref{lowboundsta963} implies that system \eqref{D0973} remains stable with $v(k)=\Delta(\vartheta(k))z(k),\  k\in\mathbb{N}$ when $\Delta$ satisfies $\|\Delta\|_{\mathcal{H}^{r\times r}_{\infty}}\leq\|\mathfrak{L}_{D=0}\|_{\infty}^{-1}.$
Certainly, to achieve the implementation of this mode-dependent controller,  $(ii)$ in Assumption \ref{Assumption1} is needed.
\end{remark}

\section{Robust Stability}\label{SecRobustStability}
This section will continue to focus on the robustness of system \eqref{disturb906}. With the help of Theorem \ref{lowboundsta963}, we get that, under the hypothesis of $(A|\mathbb{G})$ being EMSS, the uncertain system \eqref{disturb906} remains stable whenever the uncertainty $\Delta$ satisfies $\|\Delta\|_{\mathcal{H}^{r\times r}_{\infty}} \leq \|\mathfrak{L}_{D=0}\|_{\infty}^{-1}$. Unfortunately, determining the value of $\|\mathfrak{L}_{D=0}\|_{\infty}$ is not a straightforward task.
An alternative approach to addressing this problem, which may introduce conservatism, is as follows:
When $(A|\mathbb{G})$ is EMSS, imposing the constraint $\|\mathfrak{L}_{D=0}\|_{\infty}< \sqrt{\gamma}$ with $\gamma>0$, we find that $(\sqrt{\gamma})^{-1} < \|\mathfrak{L}_{D=0}\|_{\infty}^{-1}$; as long as the uncertainty $\Delta$ satisfies $\|\Delta\|_{\mathcal{H}^{r\times r}_{\infty}}\leq (\sqrt{\gamma})^{-1}$, it can be concluded that the uncertain system \eqref{disturb906} is EMSS. Due to this, given $ \gamma>0$,
we assume that the considered admissible uncertainties satisfy
the following form:
\begin{equation}\label{eqUncertaintyCon}
\|\Delta\|_{\mathcal{H}^{r\times r}_{\infty}}\leq (\sqrt{\gamma})^{-1}.
\end{equation}
Next, give the notion of robust stability, which is inspired by Definition 2.1 in \cite{todorov2012new}.
\begin{definition}\label{DefRobuststability}
Given $\gamma>0$, system \eqref{disturb906} is said to be robustly stable whenever it is EMSS, regardless of $\Delta$ as in \eqref{eqUncertaintyCon}.
\end{definition}

To proceed with investigating the robust stability of \eqref{disturb906}, according to the above discussion, of primary interest will be establishing the feasible conditions for ensuring $(A|\mathbb{G})$ being EMSS and $\|\mathfrak{L}_{D=0}\|_{\infty}<\sqrt{\gamma}$. Fortunately, Lemma \ref{lemmaBRL} provides two alternative options. However,
as mentioned in \cite{Xiao2023}, determining the existence of the stabilizing solution to the coupled-AREs \eqref{lmiARE} remains challenging. Here, we pick another equivalent way: Verify the existence of a uniformly positive solution $P\in\mathcal{H}_{\infty}^{n+*}$ to the inequality \eqref{LMI741} that satisfies the sign condition $\Psi_{3}^{\gamma}(P)(\ell)\gg 0\ \mu\text{-}a.e..$ This can be formally expressed as follows:

\begin{proposition}\label{pro1091}
Given $\gamma>0$, $(A|\mathbb{G})$ is EMSS and system \eqref{D0973} satisfies $\|\mathfrak{L}_{D=0}\|_{\infty}<\sqrt{\gamma}$ iff there exists $P\in\mathcal{H}_{\infty}^{n+*}$ satisfying the  following inequality:
\begin{equation}\label{1877noLinearEP}
\Xi(\ell)\gg0 \ \mu\text{-}a.e.,
\end{equation}
where
\begin{equation}\label{xi816}
\Xi(\ell)=\left[
  \begin{array}{cccc}
    \mathcal{E}(P)(\ell)^{-1} & 0 & A(\ell) & B(\ell)\\
    0 & I &  C(\ell) & 0 \\
   A(\ell)^{T} & C(\ell)^{T} & P(\ell) & 0 \\
   B(\ell)^{T} & 0 & 0 & \gamma I \\
  \end{array}
\right].
\end{equation}
\end{proposition}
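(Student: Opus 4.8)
The plan is to read the claim off the bounded real lemma (Lemma \ref{lemmaBRL}) once I recognize that inequality \eqref{1877noLinearEP} is nothing but a two-fold generalized Schur complement of condition \eqref{LMI741}. First I would observe that system \eqref{D0973} is exactly MJLS \eqref{414SGsystem} with $D(\ell)\equiv 0$, so that $(iii)$ of Assumption \ref{Assumption1} holds trivially and the operators in \eqref{OpTA355} specialize to $\Psi_{1}(P)=A^{T}\mathcal{E}(P)A+C^{T}C$, $\Psi_{2}(P)=A^{T}\mathcal{E}(P)B$, and $\Psi_{3}^{\gamma}(P)=\gamma I-\mathcal{T}_{B}(P)=\gamma I-B^{T}\mathcal{E}(P)B$. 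With this specialization, the equivalence $(i)\Leftrightarrow(iii)$ of Lemma \ref{lemmaBRL} asserts that ``$(A|\mathbb{G})$ is EMSS and $\|\mathfrak{L}_{D=0}\|_{\infty}<\sqrt{\gamma}$'' is the same as the existence of $P\in\mathcal{H}_{\infty}^{n+*}$ satisfying \eqref{LMI741} together with the sign condition $\Psi_{3}^{\gamma}(P)(\ell)\gg 0$ $\mu$-$a.e.$ Hence it suffices to prove, for any $P\in\mathcal{H}_{\infty}^{n+*}$, that $\Xi(\ell)\gg 0$ $\mu$-$a.e.$ is equivalent to \eqref{LMI741} together with $\Psi_{3}^{\gamma}(P)(\ell)\gg 0$ $\mu$-$a.e.$

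I would establish this algebraic equivalence by applying Proposition \ref{Schurcom} twice. Partition $\Xi$ in \eqref{xi816} into $2\times2$ super-blocks, with leading block $M_{1}=\diag\{\mathcal{E}(P)^{-1},I\}$, coupling block $M_{2}=\left[\begin{smallmatrix}A & B\\ C & 0\end{smallmatrix}\right]$, and trailing block $M_{3}=\diag\{P,\gamma I\}$. Since $P\in\mathcal{H}_{\infty}^{n+*}$ gives $P\geq\xi I$ for some $\xi>0$ and the stochastic kernel satisfies $\int_{\Theta}g(\ell,t)\mu(dt)=\mathbb{G}(\ell,\Theta)=1$, one has $\mathcal{E}(P)(\ell)\geq\xi I$, so $\mathcal{E}(P)^{-1}\in\mathcal{H}_{\infty}^{n+*}$ is well defined and $M_{1}\gg 0$ holds automatically; moreover $M_{1}\in\mathcal{SH}_{\infty}^{n+r}$, $M_{2}\in\mathcal{H}_{\infty}^{(n+r)\times(n+r)}$, and $M_{3}\in\mathcal{SH}_{\infty}^{n+r}$ by Assumption \ref{Assumption1}$(i)$ and the boundedness of the operators in \eqref{OpTA355}, so Proposition \ref{Schurcom} applies. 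By item $(iii)$ of Proposition \ref{Schurcom}, $\Xi\gg 0$ is equivalent to $M_{3}-M_{2}^{T}M_{1}^{-1}M_{2}\gg 0$, and a direct computation using $M_{1}^{-1}=\diag\{\mathcal{E}(P),I\}$ identifies this Schur complement with
\begin{equation*}
\left[\begin{array}{cc}P-\Psi_{1}(P) & -\Psi_{2}(P)\\ -\Psi_{2}(P)^{T} & \Psi_{3}^{\gamma}(P)\end{array}\right].
\end{equation*}

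Applying Proposition \ref{Schurcom}$(ii)$ to this last matrix, with $P_{1}=P-\Psi_{1}(P)$, $P_{2}=-\Psi_{2}(P)$, and $P_{3}=\Psi_{3}^{\gamma}(P)$, shows it is positive definite (in the $\mu$-$a.e.$ sense) iff $\Psi_{3}^{\gamma}(P)(\ell)\gg 0$ and $P-\Psi_{1}(P)-\Psi_{2}(P)\Psi_{3}^{\gamma}(P)^{-1}\Psi_{2}(P)^{T}\gg 0$ $\mu$-$a.e.$, the two factors of $-\Psi_{2}(P)$ cancelling in sign; these are precisely the sign condition and inequality \eqref{LMI741}. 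Chaining the two equivalences with Lemma \ref{lemmaBRL} then yields the claim in both directions.

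I expect the main obstacle to be one of careful bookkeeping rather than of any deep idea: one must verify that every intermediate matrix-valued function lies in the function space required by Proposition \ref{Schurcom} (in particular that $\mathcal{E}(P)^{-1}$ is well defined and uniformly positive, which rests on the kernel integrating to one and on $P\in\mathcal{H}_{\infty}^{n+*}$), and that the block partition of $\Xi$ is matched to the correct forms $(iii)$ and $(ii)$ of the generalized Schur complement so that the signs and the roles of $\Psi_{1},\Psi_{2},\Psi_{3}^{\gamma}$ line up exactly with \eqref{LMI741}.
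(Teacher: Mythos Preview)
Your proposal is correct and follows precisely the approach sketched in the paper's own proof, which simply says to apply Lemma~\ref{lemmaBRL} to system~\eqref{D0973} and then invoke Proposition~\ref{Schurcom}. You have merely unpacked the two nested Schur complement steps and the well-definedness of $\mathcal{E}(P)^{-1}$ in detail, which is exactly what the one-line proof in the paper leaves to the reader.
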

\begin{proof}
The result can be inferred by applying Lemma \ref{lemmaBRL} to system \eqref{D0973} and then using Proposition \ref{Schurcom}.
\end{proof}

\subsection{Finite MJLSs}\label{SecFiniteMJLS}
The discussion in this subsection tailors to the robustness of finite MJLSs. In this special case, $\|\mathfrak{L}_{D=0}\|_{\infty}$ can be easily determined by solving a linear programming problem
constrained by certain LMIs.
To be specific, adapting proposition \ref{pro1091} to make it applicable for finite MJLSs, we can derive the next result as a corollary of Theorem \ref{lowboundsta963}:
\begin{proposition}\label{finiteMarkovchain135}
Consider the scenario of the Markov chain $\{\vartheta(k), k\in\mathbb{N}\}$ taking values in a finite set $\Theta_{N}=\overline{1,N}$ with
the initial distribution given by
$\pi_{i}=\mathbb{P}(\vartheta_0=i),\ i\in\Theta_{N}$ and
the transition probability matrix $[p_{ij}]_{N\times N}$
given by $p_{ij}=
\mathbb{P}(\vartheta(k+1)=j|\vartheta(k)=i),\ i\in\Theta_{N},\
j\in\Theta_{N}.$
Regarding the system \eqref{disturb906}, the notions
$A(i):=A(\vartheta(k)=i),\ B(i):=B(\vartheta(k)=i)$, $C(i):=C(\vartheta(k)=i),$ and $\Delta(i):=\Delta(\vartheta(k)=i)$ are adopted for any $i\in\Theta_{N}.$ In this case,
the uncertain system \eqref{disturb906} is EMSS when the uncertainty $\Delta$ satisfies
$$\|\Delta\|_{\max}:=\max_{i\in\Theta_{N}}\|\Delta(i)\|
_{\mathbb{R}^{r\times r}}\leq \|\mathfrak{L}_{D=0}\|_{\infty}^{-1},$$
 where
$\|\mathfrak{L}_{D=0}\|_{\infty}$ can be calculated from the optimal solution of a convex programming problem expressed as follows:
\begin{equation*}
\|\mathfrak{L}_{D=0}\|_{\infty}^{2}=\inf_{(\gamma, P)\in \mathbb{U}} \gamma.
\end{equation*}
Here, $\gamma>0$,
$P=\{P_{i}\}_{i\in\Theta_{N}}$ with  $P_{i}$ being a positive definite matrix for any $i\in\Theta_{N}$, and
$\mathbb{U}$ is the set of $(\gamma, P)$ satisfying the following LMIs:
\begin{equation}\label{finite1125eq}
\left[
  \begin{array}{cccc}
    P_{sum}(i) & 0 & {P_{sum}(i)}A(i) & {P_{sum}(i)}B(i)\\
    0 &  I &  C(i) & 0 \\
   A(i)^{T}{P_{sum}(i)} &  C(i)^{T} & P_{i} & 0 \\
   B(i)^{T}{P_{sum}(i)} & 0 & 0 &  \gamma I \\
  \end{array}
\right]
>0,
\end{equation}
where $P_{sum}(i)=\Sigma_{j=1}^{N}p_{ij}P_{j},\ i\in\Theta_{N}.$
\end{proposition}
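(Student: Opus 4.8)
The plan is to specialize the general Borel-space result (Proposition~\ref{pro1091}) to the finite case $\Theta_N=\overline{1,N}$ and then recast the resulting operator inequality as the stated finite family of LMIs. The guiding principle is that everything in the paper reduces, when $\Theta$ is finite with counting-type measure, to a collection of $N$ ordinary matrix inequalities, one per mode $i\in\Theta_N$.

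First I would make the dictionary between the general and finite settings explicit. When $\{\vartheta(k)\}$ takes values in $\Theta_N$, the stochastic kernel is encoded by the transition probabilities $p_{ij}$, and the operator $\mathcal{E}$ defined by $\mathcal{E}(P)(\ell)=\int_\Theta g(\ell,t)P(t)\mu(dt)$ becomes the finite sum $\mathcal{E}(P)(i)=\sum_{j=1}^N p_{ij}P_j=P_{sum}(i)$. Thus the matrix-valued function $P\in\mathcal{H}^{n+*}_\infty$ is just the tuple $\{P_i\}_{i\in\Theta_N}$ of positive definite matrices, the ``$\mu$-$a.e.$'' qualifier collapses to ``for every $i\in\Theta_N$,'' and the uniform positivity $P\in\mathcal{H}^{n+*}_\infty$ is automatic once each $P_i>0$ since the index set is finite. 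With this dictionary, $\|\cdot\|_{\mathcal{H}^{r\times r}_\infty}$ reduces to $\|\Delta\|_{\max}=\max_{i\in\Theta_N}\|\Delta(i)\|_{\mathbb{R}^{r\times r}}$, which gives the stated uncertainty bound directly from Theorem~\ref{lowboundsta963}.

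Next I would apply Proposition~\ref{pro1091} in this finite setting. The matrix $\Xi(\ell)$ in~\eqref{xi816} becomes, for each mode $i$, the block matrix with $\mathcal{E}(P)(i)^{-1}=P_{sum}(i)^{-1}$ in the top-left corner and $P_i$ in the third diagonal block. Proposition~\ref{pro1091} then states that $(A|\mathbb{G})$ is EMSS with $\|\mathfrak{L}_{D=0}\|_\infty<\sqrt{\gamma}$ iff there exist positive definite $\{P_i\}$ with $\Xi(i)>0$ for all $i\in\Theta_N$. The remaining work is to convert the $P_{sum}(i)^{-1}$ appearing in $\Xi(i)$ into the $P_{sum}(i)$ appearing in~\eqref{finite1125eq}. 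I would accomplish this by a congruence transformation: multiply $\Xi(i)$ on both sides by $\diag\{P_{sum}(i),\,I,\,I,\,I\}$, which preserves positive definiteness (since $P_{sum}(i)>0$) and turns the top-left block $P_{sum}(i)^{-1}$ into $P_{sum}(i)$ while scaling the first block-row and block-column, yielding precisely the LMI~\eqref{finite1125eq}. The equivalence of feasibility of the two forms then follows because $P_{sum}(i)$ is invertible.

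Finally, I would assemble the optimization statement. Having shown that $\|\mathfrak{L}_{D=0}\|_\infty<\sqrt{\gamma}$ is equivalent to feasibility of~\eqref{finite1125eq}, the definition of $\|\mathfrak{L}_{D=0}\|_\infty$ as an infimum over admissible $\sqrt{\gamma}$ gives $\|\mathfrak{L}_{D=0}\|_\infty^2=\inf_{(\gamma,P)\in\mathbb{U}}\gamma$, and the constraints~\eqref{finite1125eq} are linear in the decision variables $(\gamma,\{P_i\})$ so the problem is genuinely a convex (LMI) program. The main obstacle I anticipate is the congruence step: one must verify that the off-diagonal blocks transform correctly and that the sign condition $\Psi_3^\gamma(P)(\ell)\gg0$ embedded in Proposition~\ref{pro1091}—which corresponds to the $\gamma I$ block together with the Schur complement structure—is faithfully captured by the positive definiteness of~\eqref{finite1125eq}, rather than being lost in the transformation. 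Everything else is bookkeeping enabled by the finiteness of $\Theta_N$.
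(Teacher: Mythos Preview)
Your proposal is correct and follows exactly the route the paper indicates: the paper presents Proposition~\ref{finiteMarkovchain135} without a detailed proof, stating only that it is obtained ``as a corollary of Theorem~\ref{lowboundsta963}'' by ``adapting Proposition~\ref{pro1091} to make it applicable for finite MJLSs.'' Your dictionary specialization, the congruence by $\diag\{P_{sum}(i),I,I,I\}$ to pass from $\Xi(i)$ to~\eqref{finite1125eq}, and the invocation of Theorem~\ref{lowboundsta963} for the bound $\|\Delta\|_{\max}\leq\|\mathfrak{L}_{D=0}\|_\infty^{-1}$ are precisely the steps this entails; the worry you flag about the sign condition $\Psi_3^\gamma(P)(\ell)\gg 0$ is unfounded, since it is already absorbed into the Schur-complement equivalence that produced Proposition~\ref{pro1091}.
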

\begin{remark}
For finite MJLSs, the above LMI method is both effective and easy to implement.
Specifically, the LMI constraints \eqref{finite1125eq} involve $N$ LMIs with $N$ matrix variables, where $N$ is the cardinality of the state set of the Markov chain. Additionally, the coupling term $\Sigma_{j=1}^{N} p_{ij} P_{j}$ is a linear combination of $N$ matrix variables.
These allow the convex programming problem to be solved using LMI solvers.
\end{remark}

\subsection{ MJLS with  Continuous-Valued Markov Jump Parameters}\label{SecRobustofConCase}
In Subsection \ref{SecFiniteMJLS}, a lower bound of the stability radius for the finite MJLS is determined by the optimal solution of a convex programming problem, in which the constraint functions are LMIs involving a finite number of matrix variables. The key to obtaining this result lies in the fact that, for the finite MJLS, the inequality \eqref{1877noLinearEP} can be deduced to solvable LMIs \eqref{finite1125eq}. Nevertheless, under the general setting of the Markov chain taking values in a Borel set, further investigation is needed to establish the conditions for the feasibility of the inequality \eqref{1877noLinearEP}.
Basically, this general setting not only covers the special case of the Markov chain taking values in a finite set but also includes the case where the Markov chain takes values in a continuous set. As we will see, the latter case, which is  the focus of this subsection,
presents some difficulties to be circumvented.

Assume that the considered state set of the Markov chain is a finite interval $\Theta_{[a,b]}:=[a,b],$ where $a<b\in\mathbb{R}^{1}.$ It is known that $\Theta_{[a,b]}$ has a cardinality of $c=2^{\aleph_{0}}$.
In this case, inequality \eqref{1877noLinearEP} unavoidably involves infinite (in fact uncountably infinite) inequalities as well as matrix variables. Additionally, the coupling term $\int_{\Theta}g(\ell,t)P(t)\mu(dt)$, which involves the integration of uncountably infinite matrix variables, poses certain challenges that need to be addressed.
Consequently, effective numerical methods are needed to handle these infinite inequalities involving uncountably many matrix variables. To this end, a gridding method will be introduced in the following, as previously carried out in \cite{Costa2015, Costa2016,Costa2017} and fully discussed in \cite{Masashi2018}. More formally,
by implementing grid partitioning to the state set $\Theta_{[a,b]}$, we divide the interval into a finite number of disjoint subintervals, and the originally infinite-dimensional problem can be converted into a finite-dimensional one.

We begin by selecting
$N+1$  grid points $h_0< h_1<\ \dots<h_N$ to form
a partition $H=\{h_0, h_1,\cdots, h_N\}$  of $\Theta_{[a,b]}$ such that $h_0 = a$ and $h_N = b$. This partition leads to the formation of a set of subintervals
$$
\Lambda_{i}:=
\begin{cases}
[h_{i-1}, h_i), &  i\in{\overline{1, N-1}}, \\
[h_{N-1}, h_N],& i=N.
\end{cases}
$$
Next, for each $i\in\overline{1, N},$ select a discrete point $\bar{h}_{i}\in{\Lambda_{i}}$,  e.g., $\bar{h}_{i}$ is the center of $\Lambda_{i}.$
For simplicity, in what follows, denote
$
q_{i}(\ell):=\int_{\Lambda_{i}}g(\ell,t)\mu(dt),
$
$$
Q(\ell):=\left[
          \begin{array}{cccc}
            \sqrt{q_{1}(\ell)}I & \sqrt{q_{2}(\ell)}I & \cdots & \sqrt{q_{N}(\ell)}I \\
          \end{array}
        \right], \ \ell\in\Theta_{[a,b]},
$$
and $A_{i}:=A(\bar{h}_{i}),\ B_{i}:=B(\bar{h}_{i}),\ C_{i}:=C(\bar{h}_{i}),\ Q_{i}:=Q(\bar{h}_{i}),\ i\in\overline{1, N}.$
Regarding the defined function $Q$, one can verify that
$Q\in{\mathcal{H}^{n\times (nN)}_{\infty}}.$
To use the gridding technique, we make the following assumption.
\begin{assumption}\label{ass1139finitegrid}
For the given $\{\Lambda_{i},\ i\in\overline{1, N}\}$ and $\{\bar{h}_{i},\ i\in\overline{1, N}\},$
there exist scalars $\sigma_{A,i}>0,\ \sigma_{B,i}>0,\ \sigma_{C,i}>0$ and $\sigma_{Q,i}>0$ such that
\begin{align}
&\|A(\ell)-A_{i}\|_{\mathbb{R}^{n\times n}}
\leq \sigma_{A,i},
\|B(\ell)-B_{i}\|_{\mathbb{R}^{n\times r}}\leq \sigma_{B,i}, \label{assABCQ} \\
&\|C(\ell)-C_{i}\|_{\mathbb{R}^{r\times n}}\leq \sigma_{C,i},
\|Q(\ell)-Q_{i}\|_{\mathbb{R}^{n\times (nN)}}\leq \sigma_{Q,i}
\nonumber
\end{align}
hold for $\mu$-almost all $\ell\in \Lambda_{i}$ and $i\in\overline{1, N}$.
\end{assumption}

In Proposition \ref{promain}, we will show that the feasibility of a particular LMI problem can guarantee the existence of a uniformly positive solution to the inequality \eqref{1877noLinearEP}. The exact formulation of the LMI feasibility problem (LMI-Problem) is given as follows:

\begin{definition}\label{LMIfeasP}
Given scalars $\gamma>0$, $\sigma_{A,i}>0,\ \sigma_{B,i}>0,\ \sigma_{C,i}>0$ and $\sigma_{Q,i}>0$, the LMI-Problem consists of finding, if possible, nonsingular matrices $X_{i}\in\mathbb{R}^{n\times n}$, positive definite matrices $P_{i}\in\mathbb{S}^{n+*}$,
as well as scalars $\alpha_{i}>0$, $\beta_{i}>0$, and $\rho_{i}>0$ satisfying the following LMIs:
\begin{equation}\label{1171PLMI}
\Pi^{1}_{i}>0,\ i\in\overline{1, N},
\end{equation}
where $\Pi^{1}_{i}$ is given by \eqref{Pi1907} with  $\Upsilon=\diag\{P_1, P_2, \cdots,  P_{N}\}.$
\end{definition}
\begin{figure*}
\small
\vspace{-5pt}
\begin{equation}\label{Pi1907}
\begin{split}
\Pi^{1}_{i}:=\left[
   \begin{array}{cccccccccccccc}
     X_{i}+X_{i}^{T} & 0 & X_{i}A_{i}  & X_{i}B_{i} & Q_{i}\Upsilon  & 0 & 2\sigma_{A,i}X_{i} & 0 & \sigma_{B,i}X_{i} & 0 & 0 & 0 & \rho_{i}I & 0\\
     0 & I & C_{i} & 0 & 0 & 2\sigma_{C,i}I & 0 & 0 & 0 & 0 & 0 & 0 & 0 & \rho_{i}I \\
      A_{i}^{T}X_{i}^{T} & C_{i}^{T} &  P_{i}  & 0  & 0 & 0 & 0 & \alpha_{i} I & 0 & 0 & 0 & 0 & 0 & 0\\
     B_{i}^{T}X_{i}^{T} &  0 &  0     & \gamma I & 0 & 0 & 0 & 0 & 0 & 0 & \beta_{i}I & 0 & 0 & 0\\
    \Upsilon^{T}Q_{i}^{T} & 0 & 0 & 0     &  \Upsilon  & 0 & 0 & 0 & 0 & 0 & 0 &  \sigma_{Q,i}\Upsilon & 0 & 0\\
     0 & 2\sigma_{C,i}I & 0 & 0 & 0 &  2\alpha_{i} I & 0 & 0 & 0 & 0 & 0 & 0 & 0 & 0\\
     2\sigma_{A,i}X_{i}^{T} & 0 & 0 & 0 & 0 & 0 &  2\alpha_{i} I & 0 & 0 & 0 & 0 & 0 & 0 & 0\\
     0 & 0 & \alpha_{i}I & 0 & 0 & 0 & 0 & \alpha_{i} I & 0 & 0 & 0 & 0 & 0 & 0\\
     \sigma_{B,i}X_{i}^{T} & 0 & 0 & 0 & 0 & 0 & 0 & 0 & \beta_{i} I & 0 & 0 & 0 & 0 & 0 \\
     0 & 0 & 0 & 0 & 0 & 0 & 0 & 0 & 0 & \beta_{i} I & 0 & 0 & 0  & 0\\
     0 & 0 & 0 & \beta_{i} I & 0 & 0 & 0 & 0 & 0 & 0 & \beta_{i}I & 0 & 0 & 0\\
     0 & 0 & 0 & 0 & \sigma_{Q,i} \Upsilon & 0 & 0 & 0 & 0 & 0 & 0 & \rho_{i}I& 0 & 0 \\
\rho_{i}I & 0 & 0 & 0 & 0 & 0 & 0 & 0 & 0 & 0 & 0 & 0 & \rho_{i}I  & 0\\
 0 & \rho_{i}I  & 0 & 0 & 0 & 0 & 0 & 0 & 0 & 0 & 0 & 0 & 0  & \rho_{i}I\\
\end{array}
\right]
\end{split}
\end{equation}
\vspace{-5pt}
\end{figure*}

\begin{proposition}\label{promain}
Suppose that Assumption \ref{ass1139finitegrid} is fulfilled.
If the LMI-Problem stated in Definition \ref{LMIfeasP} is feasible,
then there exists a solution
$P=\{P(\ell)\}_{\ell\in\Theta}\in\mathcal{H}_{\infty}^{n+*}$ to \eqref{1877noLinearEP}.
\end{proposition}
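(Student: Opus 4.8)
The plan is to produce an explicit piecewise-constant solution out of the grid data $\{X_i,P_i,\alpha_i,\beta_i,\rho_i\}$ and then to show that the $14\times14$ LMI \eqref{Pi1907}, read through a structured chain of Schur complements, is exactly a robust certificate for the target inequality \eqref{1877noLinearEP} on each cell $\Lambda_i$. First I would set $P(\ell):=\sum_{i=1}^{N}P_i\,\mathbf{1}_{\Lambda_i}(\ell)$. This $P$ is measurable and, being built from finitely many $P_i\in\mathbb{S}^{n+*}$, obeys $\min_i\lambda_{\min}(P_i)\,I\le P(\ell)\le\max_i\lambda_{\max}(P_i)\,I$, so $P\in\mathcal{H}_\infty^{n+*}$. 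On $\Theta_{[a,b]}$ one computes $\mathcal{E}(P)(\ell)=\int_{\Theta}g(\ell,t)P(t)\mu(dt)=\sum_{j=1}^{N}q_j(\ell)P_j=Q(\ell)\Upsilon Q(\ell)^{T}$, and since $\sum_jq_j(\ell)=\int_{\Theta}g(\ell,t)\mu(dt)=\mathbb{G}(\ell,\Theta)=1$ we get $\mathcal{E}(P)(\ell)\ge\min_j\lambda_{\min}(P_j)\,I\gg0$; hence $\mathcal{E}(P)(\ell)^{-1}$ in \eqref{xi816} is well defined and uniformly bounded. It then suffices to establish $\Xi(\ell)\gg0$ for $\mu$-almost every $\ell$ in each $\Lambda_i$.

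Fix $i$ and $\ell\in\Lambda_i$, and write the true data as grid value plus bounded perturbation, $A(\ell)=A_i+\delta_A$, $B(\ell)=B_i+\delta_B$, $C(\ell)=C_i+\delta_C$, $Q(\ell)=Q_i+\delta_Q$, where $\|\delta_A\|\le\sigma_{A,i}$, $\|\delta_B\|\le\sigma_{B,i}$, $\|\delta_C\|\le\sigma_{C,i}$, $\|\delta_Q\|\le\sigma_{Q,i}$ by \eqref{assABCQ}. The heart of the argument is to interpret $\Pi^1_i$ as a dilated, slack-variable form: Schur-complementing the pivot $\Upsilon>0$ in its $(5,5)$ position subtracts $Q_i\Upsilon Q_i^{T}$ from the $(1,1)$ block, while the auxiliary block groups weighted by $\alpha_i$ (indices $6,7,8$), by $\beta_i$ (indices $9,10,11$) and by $\rho_i$ (indices $12,13,14$) are arranged so that, upon elimination, each perturbation term $X_i\delta_A$ and $\delta_C$, then $X_i\delta_B$, then the quadratic $Q$-increment $Q(\ell)\Upsilon Q(\ell)^{T}-Q_i\Upsilon Q_i^{T}$ is absorbed through a matrix Young inequality of the type $U\delta V^{T}+V\delta^{T}U^{T}\le\epsilon\,UU^{T}+\epsilon^{-1}\sigma^{2}VV^{T}$ (using $\delta^{T}\delta\le\sigma^{2}I$), with $\epsilon$ the corresponding scalar multiplier $\alpha_i,\beta_i,\rho_i$. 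Carrying out these eliminations upgrades every grid quantity to its true value at $\ell$ and yields
\begin{equation*}
\left[\begin{array}{cccc}
X_i+X_i^{T}-\mathcal{E}(P)(\ell) & 0 & X_iA(\ell) & X_iB(\ell)\\
0 & I & C(\ell) & 0\\
A(\ell)^{T}X_i^{T} & C(\ell)^{T} & P(\ell) & 0\\
B(\ell)^{T}X_i^{T} & 0 & 0 & \gamma I
\end{array}\right]>0 .
\end{equation*}

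Finally I would remove the slack variable. Since $X_i$ is nonsingular and $\mathcal{E}(P)(\ell)\gg0$, the elementary identity
$X_i\mathcal{E}(P)(\ell)^{-1}X_i^{T}-\bigl(X_i+X_i^{T}-\mathcal{E}(P)(\ell)\bigr)=(X_i-\mathcal{E}(P)(\ell))\mathcal{E}(P)(\ell)^{-1}(X_i-\mathcal{E}(P)(\ell))^{T}\ge0$
lets me enlarge the $(1,1)$ block to $X_i\mathcal{E}(P)(\ell)^{-1}X_i^{T}$ without destroying positive definiteness; a congruence by $\diag\{X_i^{-T},I,I,I\}$ then sends this matrix precisely to $\Xi(\ell)$, so $\Xi(\ell)>0$. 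A uniform bound $\Xi(\ell)\ge\xi I$ with $\xi>0$ follows because there are only finitely many strict LMIs \eqref{1171PLMI} and the perturbation bounds in Assumption \ref{ass1139finitegrid} are uniform on each $\Lambda_i$; hence $\Xi(\ell)\gg0$ $\mu$-a.e., which is \eqref{1877noLinearEP}.

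I expect the genuine obstacle to lie in the second step: correctly matching each auxiliary block of the $14\times14$ matrix \eqref{Pi1907} to the perturbation it compensates, and verifying that the successive Schur complements reproduce \emph{exactly} the robustified bilinear and quadratic terms rather than merely dominating them. In particular the $\rho_i$ group must handle $\delta_Q$ entering $\mathcal{E}(P)(\ell)=Q(\ell)\Upsilon Q(\ell)^{T}$ quadratically, which forces a two-stage elimination; by contrast the construction of $P$, the reduction to per-cell inequalities, and the concluding congruence are routine.
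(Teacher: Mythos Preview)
Your proposal is correct and follows the same strategy as the paper: define $P$ piecewise constant from the $P_i$, use the linearization $X_i+X_i^{T}-\mathcal{E}(P)(\ell)\le X_i\mathcal{E}(P)(\ell)^{-1}X_i^{T}$ to pass between $\Xi(\ell)$ and the $4\times4$ dilated form, and read the $14\times14$ LMI $\Pi^1_i$ as a robust certificate that absorbs the grid-to-true perturbations $\delta_A,\delta_B,\delta_C,\delta_Q$.

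The only difference is organizational. Where you propose successive Schur complements on the auxiliary blocks together with matrix Young inequalities (one group for $\alpha_i$, one for $\beta_i$, one for $\rho_i$), the paper carries out a single $\ell$-dependent congruence: it exhibits an explicit $9\times5$ matrix $\Xi^7_i(\ell)$ built from the normalized perturbations $\Gamma_{A,i},\Gamma_{B,i},\Gamma_{C,i},\Gamma_{Q,i}$ and verifies the identity
\[
\begin{bmatrix}I & \Xi^7_i(\ell)^{T}\end{bmatrix}\Pi^1_i\begin{bmatrix}I\\ \Xi^7_i(\ell)\end{bmatrix}
=\Xi^2_i(\ell)-\diag\{\Xi^3_i(\ell),0,0,0\},
\]
where $\Xi^2_i(\ell)$ is the $5\times5$ Schur-lifted form of your target $4\times4$ matrix and $\Xi^3_i(\ell)\ge0$ by the norm bounds $\|\Gamma_{\bullet,i}\|\le1$. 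Thus $\Pi^1_i>0$ yields $\Xi^2_i(\ell)>0$ in one stroke. The two routes are equivalent---both are an S-procedure for the norm-bounded uncertainties---but the paper's single congruence sidesteps precisely the block-by-block bookkeeping you correctly identify as the delicate part, including the two-stage handling of the quadratic $Q$-increment.
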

\begin{proof}
Assume that the LMI-Problem is feasible with
$P_{i}\in\mathbb{S}^{n+*}$ and nonsingular matrices
$X_{i}\in\mathbb{R}^{n\times n},\ i\in\overline{1, N}.$
Define piecewise constant matrix-valued functions
$P\in\mathcal{H}_{\infty}^{n+*}$ and  $X\in\mathcal{H}_{\infty}^{n\times n}$ by $P(\ell)=P_{i}$ and $X(\ell)=X_{i}$ for any $\ell\in\Lambda_{i}$ and $i\in\overline{1, N}$. We will show that, under the given conditions,
\eqref{1877noLinearEP} holds with the defined $P\in\mathcal{H}_{\infty}^{n+*}$.
To handle the nonlinear matrix function $\mathcal{E}(P)(\ell)^{-1}$ in \eqref{1877noLinearEP}, we consider the non-convex inequality of that
$$X_{i}\mathcal{E}(P)(\ell)^{-1}X_{i}^{T}\geq X_{i}+X_{i}^{T}-\mathcal{E}(P)(\ell)$$
holds for $\mu$-almost all $\ell\in\Lambda_{i}$ and $i\in\overline{1, N}$. Due to $\mathcal{E}(P)\in\mathcal{H}_{\infty}^{n+*}$, this can be directly derived from
that
$$[X(\ell)-\mathcal{E}(P)(\ell)]\mathcal{E}(P)(\ell)^{-1}
[X(\ell)-\mathcal{E}(P)(\ell)]^{T}\geq 0  \ \mu\text{-}a.e..$$
Hence, it is established that
\begin{equation}\label{1188diag}
\begin{split}
\diag\{X_{i},I, I, I\}\cdot
\Xi(\ell)\cdot
\diag\{X_{i}^{T},I, I, I\}
\geq\Xi^{1}_{i}(\ell)
\end{split}
\end{equation}
holds for $\mu$-almost all $\ell\in\Lambda_{i}$ and $i\in\overline{1, N},$ where
$\Xi(\ell)$ is given in \eqref{xi816} and
$$\Xi^{1}_{i}(\ell)=\left[
  \begin{array}{cccc}
     X_{i}+ X_{i}^{T}-\mathcal{E}(P)(\ell) & 0 & X_{i}A(\ell) & X_{i}B(\ell)\\
    0 & I &  C(\ell) & 0 \\
   A(\ell)^{T}X_{i}^{T} & C(\ell)^{T} & P_{i} & 0 \\
   B(\ell)^{T}X_{i}^{T} & 0 & 0 & \gamma I \\
  \end{array}
\right].
$$
Note that \eqref{1877noLinearEP} is equivalent to that for any $i\in\overline{1, N}$, $\diag\{X_{i},I, I, I\}\cdot\Xi(\ell)\cdot\diag\{X_{i}^{T},I, I, I\}\gg0$ holds for $\mu$-almost all $\ell\in\Lambda_{i}$.
Using the identity
$$\mathcal{E}(P)(\ell)=\Sigma_{j=1}^{N}q_{j}(\ell)P_{j}
=Q(\ell)\Upsilon Q(\ell)^{T} \ \mu\text{-}a.e.,$$
and invoking Proposition \ref{Schurcom}, we find that
$\Xi^{1}_{i}(\ell)\gg0$ equals to
$ \Xi^{2}_{i}(\ell)\gg 0,$ where
\begin{equation*}\label{1267xi2}
 \Xi^{2}_{i}(\ell)=\left[
  \begin{array}{ccccc}
     X_{i}+ X_{i}^{T} & 0 & X_{i}A(\ell) & X_{i}B(\ell)& Q(\ell)\Upsilon\\
    0 & I &  C(\ell) & 0 &0 \\
   A(\ell)^{T}X_{i}^{T} & C(\ell)^{T} & P_{i} & 0 &0 \\
   B(\ell)^{T}X_{i}^{T} & 0 & 0 & \gamma I & 0\\
    \Upsilon Q(\ell)^{T} & 0 & 0 & 0 & \Upsilon\\
  \end{array}
\right].
\end{equation*}
Therefore, to obtain \eqref{1877noLinearEP},
by recalling \eqref{1188diag},
we next need to prove that
$\Xi^{2}_{i}(\ell)\gg0$ holds for $\mu$-almost all $\ell\in\Lambda_{i}$ and $i\in\overline{1, N}.$
Since Assumption \ref{ass1139finitegrid} is fulfilled, for any $i\in\overline{1, N},$ there exist
$\Gamma_{A,i}\in{\mathcal{H}^{n\times n}_{\infty}},\ \Gamma_{B,i}\in{\mathcal{H}^{n\times r}_{\infty}},\
\Gamma_{C,i}\in{\mathcal{H}^{r\times n}_{\infty}},$ $\Gamma_{Q,i}\in{\mathcal{H}^{n\times (nN)}_{\infty}}$
defined as
$\Gamma_{A,i}(\ell)=[A(\ell)-A_{i}]/\sigma_{A,i},$
$\Gamma_{B,i}(\ell)=[B(\ell)-B_{i}]/\sigma_{B,i},$
$\Gamma_{C,i}(\ell)=[C(\ell)-C_{i}]/\sigma_{C,i},$
$\Gamma_{Q,i}(\ell)=[Q(\ell)-Q_{i}]/\sigma_{Q,i},\ \ell\in \Lambda_{i}$
satisfying
\begin{equation}\label{xishu1261}
\begin{split}
&\|\Gamma_{A,i}\|_{\mathcal{H}^{n\times n}_{\infty}}\leq 1,\ \|\Gamma_{B,i}\|_{\mathcal{H}^{n\times r}_{\infty}}\leq 1,\\
&\|\Gamma_{C,i}\|_{\mathcal{H}^{r\times n}_{\infty}}\leq 1,\ \|\Gamma_{Q,i}\|_{\mathcal{H}^{n\times (nN)}_{\infty}}\leq 1.
\end{split}
\end{equation}
In addition, for any $i\in\overline{1, N}$ and $\ell\in \Lambda_{i}$,
it holds that
\begin{equation*}
\begin{split}
&\|\left[
    \begin{array}{c}
      \Gamma_{A,i}(\ell) \\
      \Gamma_{C,i}(\ell) \\
    \end{array}
  \right]\left[
    \begin{array}{cc}
      \Gamma_{A,i}(\ell)^{T} & \Gamma_{C,i}(\ell)^{T} \\
    \end{array}
  \right]\|_{\mathbb{R}^{(n+r)\times (n+r)}}\\
&\leq
\lambda_{\max}\{\Gamma_{A,i}(\ell)^{T}\Gamma_{A,i}(\ell)
+\Gamma_{C,i}(\ell)^{T}\Gamma_{C,i}(\ell)\}\\
&\leq  \lambda_{\max}\{\Gamma_{A,i}(\ell)^{T}\Gamma_{A,i}(\ell)\}
+\lambda_{\max}\{\Gamma_{C,i}(\ell)^{T}\Gamma_{C,i}(\ell)\}\\
&\leq  \|\Gamma_{A,i}\|_{\mathcal{H}^{n\times n}_{\infty}}^{2}+
\|\Gamma_{C,i}\|_{\mathcal{H}^{r\times n}_{\infty}}^{2}
\leq 2,
\end{split}
\end{equation*}
where the second inequality is built by Weyl's inequality (for example, see Theorem 4.3.1 in \cite{BookHorn2013}) for eigenvalues.
Then, from \eqref{xishu1261}, for any scalars $\rho_{A,C,i}>0,\ \rho_{B,i}>0$, and $\rho_{i}>0$, we can establish that
$
\Xi^{3}_{i}(\ell)\geq 0
$
holds for $\mu$-almost all $\ell\in\Lambda_{i}$ and  $i\in\overline{1, N},$
where
$$\Xi^{3}_{i}(\ell):=\rho_{A,C,i}\Xi^{4}_{i}(\ell)
+\rho_{B,i}\Xi^{5}_{i}(\ell)+\rho_{i}\Xi^{6}_{i}(\ell)$$ with
\begin{align*}
&\Xi^{4}_{i}(\ell):=\\
&\left[
              \begin{array}{cc}
                \sigma_{A,i}X_{i} & 0 \\
                0 & \sigma_{C,i}I \\
              \end{array}
            \right]
\left[
2I-\left[
    \begin{array}{c}
      \Gamma_{A,i}(\ell) \\
      \Gamma_{C,i}(\ell) \\
    \end{array}
  \right]
  \left[
    \begin{array}{c}
      \Gamma_{A,i}(\ell) \\
      \Gamma_{C,i}(\ell) \\
    \end{array}
  \right]^{T}
\right]\\
&
\cdot \left[
   \begin{array}{cc}
    \sigma_{A,i}X_{i}^{T} & 0 \\
     0 & \sigma_{C,i}I \\
   \end{array}
   \right],\\
\end{align*}
\begin{align*}
&\Xi^{5}_{i}(\ell):=\\
&\left[
              \begin{array}{cc}
                \sigma_{B,i}X_{i} & 0 \\
                0 & 0 \\
              \end{array}
            \right]
\left[
I-\left[
    \begin{array}{c}
      \Gamma_{B,i}(\ell) \\
      0 \\
    \end{array}
  \right]
  \left[
    \begin{array}{c}
      \Gamma_{B,i}(\ell) \\
      0 \\
    \end{array}
  \right]^{T}
\right]
\left[
   \begin{array}{cc}
    \sigma_{B,i}X_{i}^{T} & 0 \\
     0 & 0 \\
   \end{array}
   \right],
\end{align*}
\begin{align*}
&
\Xi^{6}_{i}(\ell):=
I-\left[
    \begin{array}{c}
      \Gamma_{Q,i}(\ell) \\
       0 \\
    \end{array}
  \right]
  \left[
    \begin{array}{cc}
      \Gamma_{Q,i}(\ell)^{T} & 0 \\
    \end{array}
  \right].
\end{align*}
Denote $\alpha_{i}:=\rho_{A,C,i}^{-1}$ and $\beta_{i}:=\rho_{B,i}^{-1}, i\in\overline{1, N}$.
Observe that
$$
\Xi^{2}_{i}(\ell)-\diag\{\Xi^{3}_{i}(\ell),0 ,0,0\}
=
\left[
  \begin{array}{cc}
    I\ \  & \Xi^{7}_{i}(\ell)^{T} \\
  \end{array}
\right]
\cdot \Pi^{1}_{i}\cdot
\left[
  \begin{array}{c}
    I \\
    \Xi^{7}_{i}(\ell) \\
  \end{array}
\right],
$$
where
\begin{equation*}\label{Xi71364}
\Xi^{7}_{i}(\ell)=\left[
  \begin{array}{ccccc}
   0 & -\alpha_{i}^{-1}\sigma_{C,i}I & 0 & 0 & 0 \\
   -\alpha_{i}^{-1}\sigma_{A,i}X_{i}^{T} & 0 & 0 & 0 & 0 \\
   \alpha_{i}^{-1}\sigma_{A,i}\Gamma_{A,i}(\ell)^{T}X_{i}^{T} & \alpha_{i}^{-1}\sigma_{C,i}\Gamma_{C,i}(\ell)^{T} & 0 & 0 & 0 \\
   -\beta_{i}^{-1}\sigma_{B,i}X_{i}^{T} & 0 & 0 & 0 & 0 \\
    0 & 0 & 0 & 0 & 0 \\
   \beta_{i}^{-1}\sigma_{B,i}\Gamma_{B,i}(\ell)^{T}X_{i}^{T} & 0 & 0 & 0 & 0 \\
    \Gamma_{Q,i}(\ell)^{T} & 0 & 0 & 0 & 0 \\
    -I & 0 & 0 & 0 & 0 \\
    0 & -I & 0 & 0 & 0 \\
  \end{array}
\right].
\end{equation*}
It can be verified that if $\Pi^{1}_{i}>0$, then
$$\left[
  \begin{array}{cc}
    I\ \  & \Xi^{7}_{i}(\ell)^{T} \\
  \end{array}
\right]
\cdot \Pi^{1}_{i}\cdot
\left[
  \begin{array}{c}
    I \\
    \Xi^{7}_{i}(\ell) \\
  \end{array}\right]
$$
holds for $\mu$-almost all $\ell\in\Lambda_{i}$ and $i\in\overline{1, N}$.
Since $\Xi^{3}_{i}(\ell)\geq 0$ holds for $\mu$-almost all $\ell\in \Lambda_{i}$,
hence $\Pi^{1}_{i}>0$ implies that $\Xi^{2}_{i}(\ell)\gg 0$
holds for $\mu$-almost all $\ell\in\Lambda_{i}$ and $i\in\overline{1, N}$,
thereby concluding the proof.
\end{proof}

\begin{remark}
According to Proposition \ref{pro1091}, it is clear that in
Proposition \ref{promain}, an LMI-based BRL has been established for the case where the Markov chain takes values in a finite interval.
\end{remark}

Following this, we provide a sufficient condition for the robust stability of  \eqref{disturb906} in terms of LMIs.

\begin{theorem}\label{finitegrid1166}
Suppose that Assumption \ref{ass1139finitegrid} is fulfilled. Given $ \gamma>0$, if the LMI-Problem stated in Definition \ref{LMIfeasP} is feasible, then the uncertain system \eqref{disturb906} is robustly stable.
\end{theorem}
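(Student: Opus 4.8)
The plan is to assemble the theorem as a corollary of the gridding construction in Proposition \ref{promain} together with the stability-radius bound of Theorem \ref{lowboundsta963}, without introducing any new machinery. The logical skeleton is a three-step chain: feasibility of the LMI-Problem yields a uniformly positive solution of the infinite-dimensional inequality \eqref{1877noLinearEP}; that solution certifies internal stability of the nominal system $(A|\mathbb{G})$ together with the $H_{\infty}$ norm bound $\|\mathfrak{L}_{D=0}\|_{\infty}<\sqrt{\gamma}$; and the norm bound in turn forces the stability radius to strictly exceed $(\sqrt{\gamma})^{-1}$, which is precisely the size of the admissible uncertainty in \eqref{eqUncertaintyCon}.

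First I would invoke Proposition \ref{promain}. Since Assumption \ref{ass1139finitegrid} holds and the LMI-Problem of Definition \ref{LMIfeasP} is feasible, that proposition produces a piecewise-constant $P\in\mathcal{H}_{\infty}^{n+*}$ satisfying $\Xi(\ell)\gg 0$ $\mu$-a.e., that is, the inequality \eqref{1877noLinearEP}. I would then apply Proposition \ref{pro1091} to this $P$: the existence of a uniformly positive solution of \eqref{1877noLinearEP} is equivalent to the conjunction of $(A|\mathbb{G})$ being EMSS and the input-output operator $\mathfrak{L}_{D=0}$ of system \eqref{D0973} satisfying $\|\mathfrak{L}_{D=0}\|_{\infty}<\sqrt{\gamma}$.

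Finally I would close the argument through the stability radius. With $(A|\mathbb{G})$ EMSS, Theorem \ref{lowboundsta963} gives $r(A;B,C|\mathbb{G})\geq\|\mathfrak{L}_{D=0}\|_{\infty}^{-1}$, and combining this with $\|\mathfrak{L}_{D=0}\|_{\infty}<\sqrt{\gamma}$ yields $r(A;B,C|\mathbb{G})>(\sqrt{\gamma})^{-1}$. Hence every admissible $\Delta$ with $\|\Delta\|_{\mathcal{H}^{r\times r}_{\infty}}\leq(\sqrt{\gamma})^{-1}$ lies strictly below the stability radius, so by the definition of $r(A;B,C|\mathbb{G})$ the perturbed system \eqref{disturb906} remains EMSS; by Definition \ref{DefRobuststability} this is exactly robust stability.

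The main obstacle is not in this concatenation: the substantive work was already carried out in Proposition \ref{promain}, whose gridding estimate converts the uncountable family of inequalities into the single finite LMI \eqref{Pi1907}, and in Proposition \ref{pro1091}, which rests on the BRL of Lemma \ref{lemmaBRL}. The only point here requiring care is the strictness of the inequalities. Proposition \ref{pro1091} delivers the \emph{strict} bound $\|\mathfrak{L}_{D=0}\|_{\infty}<\sqrt{\gamma}$, which is precisely what makes $(\sqrt{\gamma})^{-1}$ strictly smaller than the lower bound on the stability radius; this strictness is essential, since it is what allows the \emph{closed} ball $\|\Delta\|_{\mathcal{H}^{r\times r}_{\infty}}\leq(\sqrt{\gamma})^{-1}$ to sit inside the open ball $\|\Delta\|_{\mathcal{H}^{r\times r}_{\infty}}<r(A;B,C|\mathbb{G})$ on which EMSS is guaranteed.
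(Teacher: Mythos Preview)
Your proposal is correct and follows essentially the same route as the paper: invoke Proposition \ref{promain} to obtain $P\in\mathcal{H}_{\infty}^{n+*}$ solving \eqref{1877noLinearEP}, then use Proposition \ref{pro1091} and Theorem \ref{lowboundsta963} to conclude that \eqref{disturb906} is EMSS for every $\Delta$ with $\|\Delta\|_{\mathcal{H}^{r\times r}_{\infty}}\leq(\sqrt{\gamma})^{-1}$. Your additional remark on the strictness of $\|\mathfrak{L}_{D=0}\|_{\infty}<\sqrt{\gamma}$ is a nice clarification that the paper leaves implicit.
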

\begin{proof}
From Proposition \ref{promain}, if the given LMI-Problem is feasible, there must exist
$P\in\mathcal{H}_{\infty}^{n+*}$ such that \eqref{1877noLinearEP} is satisfied.
Then, according to Proposition \ref{pro1091} and Theorem \ref{lowboundsta963}, the uncertain system \eqref{disturb906} is EMSS whenever the uncertainty $\Delta$ satisfies $\|\Delta\|_{\mathcal{H}^{r\times r}_{\infty}}\leq (\sqrt{\gamma})^{-1}$.
\end{proof}

\begin{remark}
It is noteworthy that when $\gamma$ is sufficiently large, Theorem \ref{finitegrid1166} is reduced to a result that provides a sufficient condition for the EMSS of MJLSs with continuous-valued Markov jump parameters.
\end{remark}

The next result is obtained directly from the Schur complements.
\begin{proposition}\label{1552pro}
Given $ \gamma>0$, LMIs \eqref{1171PLMI} are equivalent to
\begin{equation}\label{1534LMIs}
\Pi^{2}_{i}-
\Pi^{3}_{i}>0,\ i\in\overline{1, N},
\end{equation}
where
\begin{equation}\label{1552pi2}
\Pi^{2}_{i}=\left[
  \begin{array}{ccccc}
     X_{i}+X_{i}^{T} & 0 & X_{i}A_{i} & X_{i}B_{i} & Q_{i}\Upsilon \\
    0 & I & C_{i} & 0 & 0 \\
    A_{i}^{T}X_{i}^{T} &  C_{i}^{T} & P_{i} & 0 & 0 \\
    B_{i}^{T}X_{i}^{T} & 0 & 0 & \gamma I & 0 \\
    \Upsilon Q_{i}^{T} & 0 & 0 & 0 & \Upsilon \\
  \end{array}
\right]
\end{equation}
and
$\Pi^{3}_{i}=\diag\{
(\frac{2\sigma_{A,i}^{2}}{\alpha_{i}}
    +\frac{\sigma_{B,i}^{2}}{\beta_{i}})X_{i}X_{i}^{T}
    +\rho_{i}I,\  (\frac{2\sigma_{C,i}^{2}}{\alpha_{i}}+\rho_{i})I,\  \alpha_{i}I,\ \beta_{i}I,\ \frac{\sigma_{Q,i}^{2}}{\rho_{i}}\Upsilon^{2}\}.
    $
\end{proposition}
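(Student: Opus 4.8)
The plan is to read \eqref{Pi1907} as a single Schur-complement relation and to eliminate the last nine block rows and columns of $\Pi^{1}_{i}$ in one step. Since for each fixed $i$ all of $\Pi^{1}_{i},\Pi^{2}_{i},\Pi^{3}_{i},X_{i},P_{i},\Upsilon$ are ordinary constant real matrices, it suffices to invoke the classical Schur complement for constant symmetric matrices (the single-point specialization of Proposition \ref{Schurcom}). First I would check that $\Pi^{1}_{i}$ is symmetric (each auxiliary coupling in \eqref{Pi1907} appears together with its transpose) and partition
$$\Pi^{1}_{i}=\left[\begin{array}{cc} \Pi^{2}_{i} & N_{i} \\ N_{i}^{T} & R_{i}\end{array}\right],$$
where the leading block coincides with $\Pi^{2}_{i}$ in \eqref{1552pi2} — here one uses that $\Upsilon=\diag\{P_{1},\dots,P_{N}\}$ is symmetric, so $\Upsilon^{T}Q_{i}^{T}=\Upsilon Q_{i}^{T}$ — while $R_{i}$ is the trailing block formed by block rows/columns $6$ through $14$ and $N_{i}$ is the corresponding coupling block.

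The central observation is that $R_{i}$ is already block diagonal: reading off \eqref{Pi1907} gives
$$R_{i}=\diag\{2\alpha_{i}I,\,2\alpha_{i}I,\,\alpha_{i}I,\,\beta_{i}I,\,\beta_{i}I,\,\beta_{i}I,\,\rho_{i}I,\,\rho_{i}I,\,\rho_{i}I\},$$
so $R_{i}>0$ holds automatically under $\alpha_{i}>0$, $\beta_{i}>0$, $\rho_{i}>0$. By the Schur complement, $\Pi^{1}_{i}>0$ is then equivalent to $R_{i}>0$ together with positivity of the single complement $\Pi^{2}_{i}-N_{i}R_{i}^{-1}N_{i}^{T}$; as $R_{i}>0$ is free, this reduces the claim to verifying $N_{i}R_{i}^{-1}N_{i}^{T}=\Pi^{3}_{i}$.

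The last step is a direct computation made clean by a support argument. Reading $N_{i}$ off \eqref{Pi1907}, block row $1$ is supported on block columns $7,9,13$ (entries $2\sigma_{A,i}X_{i}$, $\sigma_{B,i}X_{i}$, $\rho_{i}I$), block row $2$ on columns $6,14$, block row $3$ on column $8$, block row $4$ on column $11$, and block row $5$ on column $12$. These column supports are pairwise disjoint and $R_{i}^{-1}$ is diagonal, so every off-diagonal block of $N_{i}R_{i}^{-1}N_{i}^{T}$ vanishes and each diagonal block is a short sum: for instance the $(1,1)$ block is $\tfrac{(2\sigma_{A,i})^{2}}{2\alpha_{i}}X_{i}X_{i}^{T}+\tfrac{\sigma_{B,i}^{2}}{\beta_{i}}X_{i}X_{i}^{T}+\tfrac{\rho_{i}^{2}}{\rho_{i}}I=\big(\tfrac{2\sigma_{A,i}^{2}}{\alpha_{i}}+\tfrac{\sigma_{B,i}^{2}}{\beta_{i}}\big)X_{i}X_{i}^{T}+\rho_{i}I$, the $(2,2)$ block is $\big(\tfrac{2\sigma_{C,i}^{2}}{\alpha_{i}}+\rho_{i}\big)I$, and the $(5,5)$ block is $\tfrac{\sigma_{Q,i}^{2}}{\rho_{i}}\Upsilon^{2}$. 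Collecting the five diagonal blocks reproduces $\Pi^{3}_{i}$ exactly, establishing the equivalence of \eqref{1171PLMI} and \eqref{1534LMIs}. The only genuine effort is the bookkeeping of which auxiliary column pairs with which principal block; the pairwise-disjoint support structure is precisely what guarantees that $\Pi^{3}_{i}$ is block diagonal and carries no cross terms.
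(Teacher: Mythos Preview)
Your proposal is correct and follows exactly the route the paper indicates (``obtained directly from the Schur complements''): you partition $\Pi^{1}_{i}$ with leading block $\Pi^{2}_{i}$, observe the trailing block $R_{i}$ is positive diagonal under the standing hypotheses $\alpha_{i},\beta_{i},\rho_{i}>0$, and verify $N_{i}R_{i}^{-1}N_{i}^{T}=\Pi^{3}_{i}$ via the disjoint-support bookkeeping. The paper omits precisely these computational details, which you have supplied accurately.
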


\begin{remark}\label{Re1338fic}
The finite MJLS in the form of \eqref{disturb906} described in
Proposition \ref{finiteMarkovchain135}
can be re-characterized by using a MJLS with
a continuous-valued Markov chain.
This can be achieved, for example, through the following way:
Denote $\Theta_{[0,N]}:=[0,N]$ and
partition $\Theta_{[0,N]}$ into a set of subintervals
\begin{equation}\label{eq1346Lam}
\Lambda_{i}:=
\begin{cases}
[i-1,i), &  i\in{\overline{1,N-1}}, \\
[N-1,N],& i=N.
\end{cases}
\end{equation}
Define a continuous-valued Markov chain $\{\vartheta(k), k\in\mathbb{N}\}$ taking values in $\Theta_{[0,N]}$
with the initial distribution given by
$\mu_{0}(\vartheta_0\in{\Lambda_{i}})=\pi_{i},\ i\in{\Theta_{N}}$
and the stochastic kernel $\mathbb{G}(\cdot,\cdot)$ satisfying
$
\mathbb{G}(\ell,\Lambda_{j})= \mathbb{P}(\vartheta(k+1)\in \Lambda_{j}|\vartheta(k)=\ell)=p_{ij} \text{ for } \ell\in \Lambda_{i},
$
$i,j\in{\Theta_{N}},$ $k\in\mathbb{N}.$
By setting $A(\vartheta(k)\in \Lambda_{i})=A(i),$ $B(\vartheta(k)\in \Lambda_{i})=B(i),$
$C(\vartheta(k)\in \Lambda_{i})=C(i)$, and $\Delta(\vartheta(k)\in \Lambda_{i})=\Delta(i)$
for any $i\in{\Theta_{N}}$ and $k\in\mathbb{N},$
we can reconstruct an uncertain system in the form of \eqref{disturb906} with the defined continuous-valued Markov chain.
Based on this approach, the robustness problem for the finite MJLSs can also be incorporated
within the framework of MJLSs with the continuous-valued Markov chain.
\end{remark}

Regarding the case of finite MJLSs, Theorem \ref{finitegrid1166} specializes as follows:

\begin{corollary}\label{1577corfinite}
Consider the case stated in Proposition \ref{finiteMarkovchain135}, where the
the Markov chain takes values in a finite set $\Theta_{N}$.
For every $i\in\Theta_{N},$
let
$A_{i}=A(i),\ B_{i}=B(i),$ $C_{i}=C(i)$ and $Q_{i}=\left[
          \begin{array}{cccc}
            \sqrt{{p_{i1}}}I & \sqrt{{p_{i2}}}I & \cdots & \sqrt{{p_{iN}}}I \\
          \end{array}
        \right].$
Given $ \gamma>0$, if the LMI-Problem stated in Definition \ref{LMIfeasP} is feasible, then the finite MJLS in the form of \eqref{disturb906} is robustly stable.
\end{corollary}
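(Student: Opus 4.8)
The plan is to reduce Corollary \ref{1577corfinite} to Theorem \ref{finitegrid1166} by embedding the finite MJLS into the continuous-valued framework through the re-characterization described in Remark \ref{Re1338fic}. First I would construct, following \eqref{eq1346Lam}, the continuous-valued Markov chain $\{\vartheta(k),k\in\mathbb{N}\}$ on $\Theta_{[0,N]}=[0,N]$ with the partition $\{\Lambda_i\}$, initial distribution $\mu_0(\vartheta_0\in\Lambda_i)=\pi_i$, and stochastic kernel $\mathbb{G}(\ell,\Lambda_j)=p_{ij}$ for $\ell\in\Lambda_i$, together with the piecewise-constant assignment $A(\ell)=A_i$, $B(\ell)=B_i$, $C(\ell)=C_i$, $\Delta(\ell)=\Delta(i)$ on each $\Lambda_i$. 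This produces an uncertain system of the form \eqref{disturb906} whose EMSS and robust-stability behaviour coincides with that of the original finite MJLS, so it suffices to establish robust stability for the re-characterized system.

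The key step is to verify that, taking the grid points $\bar{h}_i\in\Lambda_i$, all four approximation errors in \eqref{assABCQ} vanish identically on each subinterval, so that Assumption \ref{ass1139finitegrid} holds for arbitrary positive $\sigma_{A,i},\sigma_{B,i},\sigma_{C,i},\sigma_{Q,i}$. For $A,B,C$ this is immediate from the piecewise-constant construction, giving $A(\ell)-A_i=B(\ell)-B_i=C(\ell)-C_i=0$ for all $\ell\in\Lambda_i$. For $Q$ I would compute $q_j(\ell)=\int_{\Lambda_j}g(\ell,t)\mu(dt)=\mathbb{G}(\ell,\Lambda_j)=p_{ij}$ for every $\ell\in\Lambda_i$, invoking Assumption \ref{assMarkov}$(ii)$ and the prescribed kernel; hence $Q(\ell)=[\sqrt{p_{i1}}I,\cdots,\sqrt{p_{iN}}I]=Q_i$ exactly on $\Lambda_i$, matching the definition of $Q_i$ in the statement and yielding $Q(\ell)-Q_i=0$. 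Consequently the LMIs \eqref{1171PLMI} of Definition \ref{LMIfeasP} built from $A_i,B_i,C_i,Q_i$ are precisely the ones assumed feasible in the hypothesis.

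With Assumption \ref{ass1139finitegrid} in force and the LMI-Problem feasible for the given $\gamma$, Theorem \ref{finitegrid1166} then directly yields that the re-characterized system, and therefore the original finite MJLS \eqref{disturb906}, is robustly stable in the sense of Definition \ref{DefRobuststability}. I expect the main obstacle to be bookkeeping around the constructed chain rather than any deep estimate: one must confirm that this chain satisfies the standing Assumptions \ref{assMarkov} and \ref{Asspositive}, for instance by taking $\mu$ to be Lebesgue measure on $[0,N]$ and choosing densities $\nu_0(\ell)=\pi_i$ and $g(\ell,t)=p_{ij}$ on $\Lambda_i\times\Lambda_j$, which requires $\pi_i>0$ and a non-degenerate transition matrix so that $\nu_0(\ell)>0$ and $\int_\Theta g(t,\ell)\mu(dt)=\sum_i p_{ij}>0$ hold $\mu$-a.e.; and one must ensure the identification $q_j(\ell)=p_{ij}$ holds on the whole subinterval $\Lambda_i$ rather than merely at $\bar{h}_i$. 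Once these routine consistency checks are dispatched, the conclusion follows from the already-established Theorem \ref{finitegrid1166}.
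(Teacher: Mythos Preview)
Your proposal is correct and follows essentially the same approach as the paper: embed the finite MJLS into the continuous-valued setting via Remark~\ref{Re1338fic}, observe that the piecewise-constant data make the approximation errors in \eqref{assABCQ} vanish so Assumption~\ref{ass1139finitegrid} holds for any positive $\sigma_{A,i},\sigma_{B,i},\sigma_{C,i},\sigma_{Q,i}$, and then invoke Theorem~\ref{finitegrid1166}. Your additional bookkeeping (the explicit computation $q_j(\ell)=p_{ij}$ and the verification of Assumptions~\ref{assMarkov} and~\ref{Asspositive}) is more detailed than the paper's own proof, which simply asserts these points as ``clear,'' but the logical route is the same.
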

\begin{proof}
Clearly, under the partition conducted in \eqref{eq1346Lam},
assumptions \eqref{assABCQ} hold for any scalars $\sigma_{A,i}>0,\ \sigma_{B,i}>0,\ \sigma_{C,i}>0$ as well as $\sigma_{Q,i}>0$, $i\in\Theta_{N}.$
Hence, the desired result follows from Remark \ref{Re1338fic} and Theorem \ref{finitegrid1166}.
\end{proof}

The introduction of scalars $\sigma_{A,i}>0,$ $\sigma_{B,i}>0,$ $\sigma_{C,i}>0,$ and $\sigma_{Q,i}>0, i\in\Theta_{N}$ may bring some conservatism to the sufficient condition in Proposition \ref{promain}.
We will demonstrate this in the forthcoming subsection.
Regarding the case of the Markov chain taking values in a finite set, certainly, the conditions in Corollary \ref{1577corfinite} also inherit this conservatism. For this special scenario,
it is to be expected that the solutions $P_{i},\ i\in\Theta_{N}$ of the LMI-Problem for Corollary \ref{1577corfinite} can be utilized to solve the LMIs in Proposition \ref{finiteMarkovchain135}. This will be confirmed in Proposition \ref{pro1489}.
\begin{proposition}\label{pro1489}
Consider the case stated in Proposition \ref{finiteMarkovchain135}, where the
the Markov chain takes values in a finite set $\Theta_{N}$.
For every $i\in\Theta_{N},$
let
$A_{i}=A(i),\ B_{i}=B(i),$ $C_{i}=C(i)$ and $Q_{i}=\left[
          \begin{array}{cccc}
            \sqrt{{p_{i1}}}I & \sqrt{{p_{i2}}}I & \cdots & \sqrt{{p_{iN}}}I \\
          \end{array}
        \right].$
Given $ \gamma>0$, the solutions $P_{i},\ i\in\Theta_{N}$ of the LMI-Problem in Corollary \ref{1577corfinite} satisfy LMIs \eqref{finite1125eq} in Proposition \ref{finiteMarkovchain135}.
\end{proposition}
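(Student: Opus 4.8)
The plan is to show that the dilated LMI-Problem, once restricted to the finite setting, collapses onto the Lyapunov-type LMIs \eqref{finite1125eq} after eliminating the slack variables $X_{i}$ and absorbing the block $\Upsilon$. I would proceed by a chain of equivalences that is reversible in the key steps and monotone in one step.

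First I would invoke Proposition \ref{1552pro}: feasibility of $\Pi^{1}_{i}>0$ is equivalent to $\Pi^{2}_{i}-\Pi^{3}_{i}>0$. Since $\alpha_{i},\beta_{i},\rho_{i}>0$ and $\Upsilon=\diag\{P_{1},\dots,P_{N}\}>0$, every diagonal block of $\Pi^{3}_{i}$ is positive semidefinite, so $\Pi^{3}_{i}\geq 0$. Writing $\Pi^{2}_{i}=(\Pi^{2}_{i}-\Pi^{3}_{i})+\Pi^{3}_{i}$ then yields $\Pi^{2}_{i}>0$. Thus the solution $(X_{i},P_{i})$ of the LMI-Problem already satisfies $\Pi^{2}_{i}>0$, and this is the object I would work with.

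Next I would take a Schur complement with respect to the $(5,5)$-block $\Upsilon$ of $\Pi^{2}_{i}$, which is admissible because $\Upsilon>0$. The only nonzero off-diagonal blocks in the fifth block-row and column are $Q_{i}\Upsilon$ and $\Upsilon Q_{i}^{T}$, so the correction term reduces to the single block $Q_{i}\Upsilon\,\Upsilon^{-1}\,\Upsilon Q_{i}^{T}=Q_{i}\Upsilon Q_{i}^{T}$. Using $Q_{i}=[\sqrt{p_{i1}}I,\dots,\sqrt{p_{iN}}I]$ I obtain the identity $Q_{i}\Upsilon Q_{i}^{T}=\sum_{j=1}^{N}p_{ij}P_{j}=P_{sum}(i)$, precisely the coupling term of \eqref{finite1125eq}. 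Hence $\Pi^{2}_{i}>0$ is equivalent to
\begin{equation*}
\left[
  \begin{array}{cccc}
     X_{i}+X_{i}^{T}-P_{sum}(i) & 0 & X_{i}A_{i} & X_{i}B_{i} \\
    0 & I & C_{i} & 0 \\
   A_{i}^{T}X_{i}^{T} & C_{i}^{T} & P_{i} & 0 \\
   B_{i}^{T}X_{i}^{T} & 0 & 0 & \gamma I \\
  \end{array}
\right]>0.
\end{equation*}

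Finally I would remove the slack variable $X_{i}$. Applying the completion-of-squares inequality already used in Proposition \ref{promain}, namely $[X_{i}-P_{sum}(i)]P_{sum}(i)^{-1}[X_{i}-P_{sum}(i)]^{T}\geq 0$, which rearranges to $X_{i}P_{sum}(i)^{-1}X_{i}^{T}\geq X_{i}+X_{i}^{T}-P_{sum}(i)$, I would replace the $(1,1)$-block above by the larger block $X_{i}P_{sum}(i)^{-1}X_{i}^{T}$; this only adds a positive semidefinite term to a positive definite matrix, so the enlarged matrix remains positive definite. A congruence transformation by $T_{i}:=\diag\{P_{sum}(i)X_{i}^{-1},I,I,I\}$ (invertible because $X_{i}$ is nonsingular and $P_{sum}(i)>0$) then sends the $(1,1)$-block to $P_{sum}(i)$, the blocks $X_{i}A_{i},X_{i}B_{i}$ to $P_{sum}(i)A_{i},P_{sum}(i)B_{i}$ and symmetrically, while leaving the remaining blocks fixed, producing exactly the matrix of \eqref{finite1125eq}; since $T_{i}$ is invertible, positive definiteness is preserved, so the $P_{i}$ satisfy \eqref{finite1125eq}. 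The main obstacle is the order of operations: the slack $X_{i}$ cannot be eliminated before the $\Upsilon$-block is absorbed, and it is only after recognizing $Q_{i}\Upsilon Q_{i}^{T}=P_{sum}(i)$ that the completion-of-squares step produces the exact Lyapunov coupling; pinning down the congruence factor $P_{sum}(i)X_{i}^{-1}$ so that $X_{i}P_{sum}(i)^{-1}X_{i}^{T}$ collapses back to $P_{sum}(i)$ is the one place where a misplaced factor would derail the argument, while the rest is routine block algebra.
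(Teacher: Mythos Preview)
Your proposal is correct and follows essentially the same route as the paper's proof: both pass from $\Pi^{1}_{i}>0$ to $\Pi^{2}_{i}>0$ via Proposition \ref{1552pro} and the nonnegativity of $\Pi^{3}_{i}$, then take the Schur complement in the $\Upsilon$-block (yielding $Q_{i}\Upsilon Q_{i}^{T}=P_{sum}(i)$), apply the completion-of-squares bound $X_{i}P_{sum}(i)^{-1}X_{i}^{T}\geq X_{i}+X_{i}^{T}-P_{sum}(i)$, and finish with the congruence $\diag\{P_{sum}(i)X_{i}^{-1},I,I,I\}$. Your write-up is slightly more explicit about the Schur-complement computation and the justification of each monotone step, but the argument is the same.
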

\begin{proof}
It can be obtained that $\Pi^{3}_{i}>0$ for
any scalars $\sigma_{A,i}>0,\ \sigma_{B,i}>0,\ \sigma_{C,i}>0,$ $\sigma_{Q,i}>0,$ $\alpha_{i}>0$, $\beta_{i}>0$, and $\rho_{i}>0$, where $i\in\Theta_{N}$ and $\Pi^{3}_{i}$ is defined in Proposition \ref{1552pro}.
From Proposition \ref{1552pro}
we get that the LMI constraints \eqref{1171PLMI} leads to
$\Pi^{2}_{i}>0,$ $i\in\Theta_{N}.$ On the other hand, applying the Schur complements,
one has the equivalent form of
 $\Pi^{2}_{i}>0$ as follows:
$$\left[
  \begin{array}{cccc}
     X_{i}+X_{i}^{T}-P_{sum}(i) & 0 & X_{i}A_{i} & X_{i}B_{i}  \\
    0 & I & C_{i} & 0  \\
    A_{i}^{T}X_{i}^{T} &  C_{i}^{T} & P_{i} & 0 \\
    B_{i}^{T}X_{i}^{T} & 0 & 0 & \gamma I \\
  \end{array}
\right]>0, i\in\Theta_{N},
$$
where $P_{sum}(i)=\Sigma_{j=1}^{N}p_{ij}P_{j}.$
For any $i\in\Theta_{N},$
use inequality
$X_{i}P_{sum}(i)^{-1}X_{i}^{T}\geq X_{i}+X_{i}^{T}-P_{sum}(i)$,
and an application of the congruence transformation
$\diag\{P_{sum}(i)X_{i}^{-1}, I,I, I\} \{\cdot\}\diag\{(X_{i}^{T})^{-1}P_{sum}(i),I,
I, I\}$ yields that \eqref{finite1125eq} holds with the given $\gamma.$
\end{proof}

\subsection{Feasibility and Conservatism of the Gridding Method}\label{subsecFeaConGriding}
In this subsection, we will show the feasibility and
conservatism of the proposed gridding method.
During the proof of Proposition \ref{promain}, a piecewise constant matrix-valued function was directly  employed as a solution to the inequality \eqref{1877noLinearEP}. However,
it is worth examining whether there exists such a piecewise constant matrix-valued function that meets \eqref{1877noLinearEP} under some mild technical assumptions. Besides, if the function does exist,
using this numerical approximation may lead to the resulting conditions being conservative, raising the question of how conservative the proposed gridding method actually is.
To tackle these issues, the following technical assumptions are made:
\begin{assumption}\label{continuousAss1535}
$(i)$~The measure $\mu$ on $\mathcal{B}(\Theta_{[a,b]})$ is the Lebesgue measure;\\
$(ii)$~Functions $A\in\mathcal{H}^{n\times n}_{\infty}$, $B\in\mathcal{H}^{n\times r}_{\infty}$, and
$C\in\mathcal{H}^{r\times n}_{\infty}$ are continuous;\\
$(iii)$~Function $g(\ell,t)$ is continuous on $\Theta_{[a,b]}\times\Theta_{[a,b]}.$
\end{assumption}

\begin{assumption}\label{ass1636}
Given scalar $\gamma>0$, the following conditions hold:\\
$(i)$~There exists a solution $P\in\mathcal{H}_{\infty}^{n+*}$ to \eqref{1877noLinearEP};\\
$(ii)$~There exists a sequence of matrices $\{P_{i}, i\in\overline{1, N}\}$  with $P_{i}\in\mathbb{S}^{n+*}$ such that
the piecewise constant matrix-valued function $P_{c}=\{P_{c}(\ell)\}_{\ell\in\Theta_{[a,b]}}\in \mathcal{H}^{n+*}_{\infty}$ defined as
$
P_{c}(\ell)=P_{i},\ \ell\in\Lambda_{i},\ i\in\overline{1, N}
$
satisfies
$\|P-P_{c}\|_{\mathcal{H}^{n\times n}_{\infty}}\leq \varepsilon_{P},$
where $\varepsilon_{P}>0$.
\end{assumption}

Use $P_{c}$ as a numerical approximation of
$P$ that fulfills \eqref{1877noLinearEP}, with the approximation error constrained by
$\|P-P_{c}\|_{\mathcal{H}^{n\times n}_{\infty}}\leq\varepsilon_{P}$. Intuitively, if $\varepsilon_{P}$ is sufficiently small, this numerical approximation would be feasible, meaning that $P_{c}$ meets \eqref{1877noLinearEP}. This assertion is formally stated below and a rigorous proof can be found in Appendix \ref{AppC}.
\begin{lemma}\label{step1}
Given scalar $\gamma>0$, suppose that Assumptions \ref{continuousAss1535}-\ref{ass1636} are fulfilled.
If the approximation error $\varepsilon_{P}$ is sufficiently small, then the function $P_{c}$ defined in Assumption \ref{ass1636} satisfies \eqref{1877noLinearEP}.
\end{lemma}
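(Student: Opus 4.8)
The plan is to treat this as a uniform perturbation argument: since $P$ already satisfies the strict inequality \eqref{1877noLinearEP}, it suffices to show that replacing $P$ by a sufficiently close $P_{c}$ perturbs the matrix $\Xi(\ell)$ of \eqref{xi816} by an amount that is uniformly small over $\ell$, and then to absorb this perturbation into the uniform positive-definiteness gap of $\Xi$. Concretely, write $\Xi_{c}(\ell)$ for the matrix obtained from \eqref{xi816} by substituting $P_{c}$ for $P$, so that its top-left block is $\mathcal{E}(P_{c})(\ell)^{-1}$ and its $(3,3)$ block is $P_{c}(\ell)$. The difference $\Xi(\ell)-\Xi_{c}(\ell)$ is block-diagonal, with only two nonzero blocks: the $(3,3)$ block $P(\ell)-P_{c}(\ell)$ and the $(1,1)$ block $\mathcal{E}(P)(\ell)^{-1}-\mathcal{E}(P_{c})(\ell)^{-1}$. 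Because $P\in\mathcal{H}_{\infty}^{n+*}$ solves \eqref{1877noLinearEP}, there is a uniform constant $\xi>0$ with $\Xi(\ell)\geq\xi I$ for $\mu$-almost all $\ell$ (this is the content of the strict sign condition $\Xi\gg0$); hence if I can show $\|\Xi(\ell)-\Xi_{c}(\ell)\|\leq\delta(\varepsilon_{P})$ a.e. with $\delta(\varepsilon_{P})\to 0$ as $\varepsilon_{P}\to 0$, then $\Xi_{c}(\ell)\geq(\xi-\delta(\varepsilon_{P}))I\gg 0$ once $\varepsilon_{P}$ is small, which is exactly \eqref{1877noLinearEP} for $P_{c}$.

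The $(3,3)$ block is immediate: $\|P(\ell)-P_{c}(\ell)\|_{\mathbb{R}^{n\times n}}\leq\|P-P_{c}\|_{\mathcal{H}^{n\times n}_{\infty}}\leq\varepsilon_{P}$ by Assumption \ref{ass1636}. The $(1,1)$ block is the technical heart, and here I would first record uniform lower bounds on the averaged matrices. Since $P\in\mathcal{H}_{\infty}^{n+*}$ there is $\xi_{P}>0$ with $P(\ell)\geq\xi_{P}I$ a.e.; since $g(\ell,\cdot)$ is a probability density (Assumption \ref{assMarkov}), $\int_{\Theta}g(\ell,t)\mu(dt)=1$; and since $\|P-P_{c}\|_{\mathcal{H}^{n\times n}_{\infty}}\leq\varepsilon_{P}$ gives $P_{c}(\ell)\geq(\xi_{P}-\varepsilon_{P})I$ a.e., it follows that
$$\mathcal{E}(P)(\ell)=\int_{\Theta}g(\ell,t)P(t)\mu(dt)\geq\xi_{P}I,\quad \mathcal{E}(P_{c})(\ell)\geq(\xi_{P}-\varepsilon_{P})I$$
for $\mu$-almost all $\ell$, the second bound being meaningful (and the block $\mathcal{E}(P_{c})(\ell)^{-1}\in\mathcal{SH}^{n}_{\infty}$ legitimate) as soon as $\varepsilon_{P}<\xi_{P}$. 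In particular $\|\mathcal{E}(P)(\ell)^{-1}\|\leq\xi_{P}^{-1}$ and $\|\mathcal{E}(P_{c})(\ell)^{-1}\|\leq(\xi_{P}-\varepsilon_{P})^{-1}$, while the same normalization yields $\|\mathcal{E}(P)(\ell)-\mathcal{E}(P_{c})(\ell)\|\leq\int_{\Theta}g(\ell,t)\|P(t)-P_{c}(t)\|\mu(dt)\leq\varepsilon_{P}$. Substituting these into the resolvent identity $\mathcal{E}(P)^{-1}-\mathcal{E}(P_{c})^{-1}=\mathcal{E}(P)^{-1}[\mathcal{E}(P_{c})-\mathcal{E}(P)]\mathcal{E}(P_{c})^{-1}$ gives
$$\|\mathcal{E}(P)(\ell)^{-1}-\mathcal{E}(P_{c})(\ell)^{-1}\|\leq\frac{\varepsilon_{P}}{\xi_{P}(\xi_{P}-\varepsilon_{P})}\quad\mu\text{-}a.e.$$

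Finally I would combine: as $\Xi(\ell)-\Xi_{c}(\ell)$ is block-diagonal, its spectral norm is the maximum of the two block norms, so $\|\Xi(\ell)-\Xi_{c}(\ell)\|\leq\max\{\varepsilon_{P},\ \varepsilon_{P}/[\xi_{P}(\xi_{P}-\varepsilon_{P})]\}=:\delta(\varepsilon_{P})$, which tends to $0$ as $\varepsilon_{P}\to 0$. Choosing $\varepsilon_{P}$ small enough that $\varepsilon_{P}<\xi_{P}$ and $\delta(\varepsilon_{P})<\xi$ yields $\Xi_{c}(\ell)\geq(\xi-\delta(\varepsilon_{P}))I$ a.e., i.e. $P_{c}$ satisfies \eqref{1877noLinearEP}. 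I expect the main obstacle to be precisely the uniform-in-$\ell$ control of the inverse block $\mathcal{E}(P)(\ell)^{-1}$: everything reduces to a positive lower bound on $\mathcal{E}(P)(\ell)$ and $\mathcal{E}(P_{c})(\ell)$ that does not degenerate as $\ell$ ranges over $\Theta_{[a,b]}$, which is what makes the resolvent estimate uniform, and the normalization $\int_{\Theta}g(\ell,t)\mu(dt)=1$ together with membership of $P$ (hence $P_{c}$) in $\mathcal{H}_{\infty}^{n+*}$ is exactly what supplies it. The continuity hypotheses of Assumption \ref{continuousAss1535} are not needed for this perturbation step itself, beyond guaranteeing measurability of $\mathcal{E}(P_{c})$; they rather underpin the companion existence result that justifies positing such a piecewise-constant $P_{c}$.
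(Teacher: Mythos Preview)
Your argument is correct and takes a genuinely different route from the paper's.  The paper's proof (Appendix~\ref{AppC}) passes to the Schur-complement operator
\[
W(P)(\ell)=P(\ell)-\Psi_{1}(P)(\ell)-\Psi_{2}(P)(\ell)\bigl[\gamma I-\mathcal{T}_{B}(P)(\ell)\bigr]^{-1}\Psi_{2}(P)(\ell)^{T},
\]
computes the difference $W(P_{c})-W(P)$ by an algebraic identity involving the feedback operator $\bar{\mathcal{F}}^{\gamma}$, derives sign conditions $\gamma I-\mathcal{T}_{B}(P_{c})\gg 0$ and $W(P_{c})\gg 0$ separately, and then applies Proposition~\ref{Schurcom} twice to recover \eqref{1877noLinearEP} for $P_{c}$.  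You instead stay with the block matrix $\Xi$ itself and exploit that $\Xi-\Xi_{c}$ is block-diagonal with only the $(1,1)$ and $(3,3)$ blocks nonzero; the stochastic-kernel normalization $\int_{\Theta}g(\ell,t)\mu(dt)=1$ gives the uniform lower bounds on $\mathcal{E}(P)$ and $\mathcal{E}(P_{c})$, and a single resolvent identity finishes the job.  Your approach is shorter and conceptually cleaner, while the paper's route produces more explicit thresholds on $\varepsilon_{P}$ in terms of the system norms $\|A\|_{\mathcal{H}^{n\times n}_{\infty}}$, $\|B\|_{\mathcal{H}^{n\times r}_{\infty}}$, $\|P\|_{\mathcal{H}^{n\times n}_{\infty}}$, which may be useful if quantitative smallness conditions are wanted.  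Your remark that Assumption~\ref{continuousAss1535} is not actually invoked in this perturbation step is also accurate for the paper's own proof.
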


In Lemma \ref{step1}, we investigated the viability of numerically approximating the solution $P$ to the inequality \eqref{1877noLinearEP} by using the piecewise constant matrix-valued function $P_{c}$.  Beyond this,
it is also essential to explore the feasibility of the LMI-Problem stated in Definition \ref{LMIfeasP}.

Suppose that
Assumption \ref{ass1139finitegrid} is fulfilled.
Define
\begin{equation*}\label{de1867ABCQ}
\sigma_{A,B,C,Q}:=\max_{i\in\overline{1,N}}\{\sigma_{A,i}, \sigma_{B,i},\sigma_{C,i}, \sigma_{Q,i}\}.
\end{equation*}
Clearly, when the number of grid points in quantizing the state space is sufficiently large,
$\sigma_{A,B,C,Q}$ is sufficiently small.

\begin{lemma}\label{lemma1862}
Given scalar $\gamma>0$, suppose that
Assumption \ref{ass1139finitegrid} is fulfilled with $\sigma_{A,i}=\sigma_{C,i},\ i\in\overline{1, N}$
and Assumptions \ref{continuousAss1535}-\ref{ass1636} are fulfilled.
Let $\varepsilon_{P}$ be sufficiently small
such that $P_{c}$ satisfies \eqref{1877noLinearEP}.
If $\sigma_{A,B,C,Q}$ is sufficiently small, then there exist nonsingular matrices $X_{i}\in\mathbb{R}^{n\times n}$, as well as scalars $\alpha_{i}>0$, $\beta_{i}>0$, and $\rho_{i}>0, i\in\overline{1, N}$ such that LMIs \eqref{1171PLMI} are feasible with $P_{i},i\in\overline{1, N}$ defined in Assumption \ref{ass1636} .
\end{lemma}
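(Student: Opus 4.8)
The plan is to exploit the Schur-complement reformulation of the target LMIs supplied by Proposition \ref{1552pro}: feasibility of \eqref{1171PLMI} is equivalent to $\Pi^{2}_{i}-\Pi^{3}_{i}>0$ for every $i\in\overline{1,N}$, where $\Pi^{2}_{i}$ collects the nominal (sampled) data as in \eqref{1552pi2} and $\Pi^{3}_{i}$ is a positive semidefinite diagonal matrix whose blocks are proportional to $\sigma_{A,i}^{2}/\alpha_{i}$, $\sigma_{B,i}^{2}/\beta_{i}$, $\sigma_{C,i}^{2}/\alpha_{i}$, $\sigma_{Q,i}^{2}/\rho_{i}$, $\alpha_{i}$, $\beta_{i}$ and $\rho_{i}$. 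The strategy is therefore twofold: first construct the slack variables $X_{i}$ so that $\Pi^{2}_{i}$ is uniformly positive definite with a margin that does not degenerate as the grid is refined, and then choose the scalars $\alpha_{i},\beta_{i},\rho_{i}$ so that $\Pi^{3}_{i}$ can be driven below that margin once $\sigma_{A,B,C,Q}$ is small.

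For the first part I would set $X_{i}:=\mathcal{E}(P_{c})(\bar{h}_{i})=Q_{i}\Upsilon Q_{i}^{T}$, which is symmetric and positive definite (hence nonsingular) since $P_{c}\in\mathcal{H}^{n+*}_{\infty}$. The point of this choice is that the Young-type bound $X_{i}\mathcal{E}(P_{c})(\bar{h}_{i})^{-1}X_{i}^{T}\ge X_{i}+X_{i}^{T}-\mathcal{E}(P_{c})(\bar{h}_{i})$ used in Proposition \ref{promain} holds with equality at $\ell=\bar{h}_{i}$, so that $\Xi^{1}_{i}(\bar{h}_{i})$ coincides exactly with $\diag\{X_{i},I,I,I\}\,\Xi(\bar{h}_{i})\,\diag\{X_{i}^{T},I,I,I\}$. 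By Lemma \ref{step1}, $P_{c}$ satisfies \eqref{1877noLinearEP}, i.e. the matrix $\Xi$ in \eqref{xi816} built from $P_{c}$ obeys $\Xi(\ell)\gg 0$ for $\mu$-almost all $\ell$. Invoking Assumption \ref{continuousAss1535}, the map $\ell\mapsto\mathcal{E}(P_{c})(\ell)=\sum_{j}q_{j}(\ell)P_{j}$ is continuous (continuity of each $q_{j}$ follows from continuity of $g$), and on the interior of each $\Lambda_{i}$ the remaining data $A,B,C,P_{c}$ are continuous, so $\Xi$ is continuous there; a density argument then upgrades the almost-everywhere bound $\Xi(\ell)\ge\xi I$ to hold at the interior sample point $\bar{h}_{i}$. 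Consequently $\Xi(\bar{h}_{i})\gg 0$ forces $\Xi^{1}_{i}(\bar{h}_{i})\gg 0$, and applying the generalized Schur complement of Proposition \ref{Schurcom} together with the identity $\mathcal{E}(P_{c})(\bar{h}_{i})=Q_{i}\Upsilon Q_{i}^{T}$ yields $\Pi^{2}_{i}=\Xi^{2}_{i}(\bar{h}_{i})\gg 0$.

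For the second part I would take $\alpha_{i}=\beta_{i}=\rho_{i}=\sigma_{A,B,C,Q}$. Since $\sigma_{A,i},\sigma_{B,i},\sigma_{C,i},\sigma_{Q,i}\le\sigma_{A,B,C,Q}$ and $X_{i},\Upsilon$ are uniformly bounded, every block of $\Pi^{3}_{i}$ is then $O(\sigma_{A,B,C,Q})$, so $\Pi^{3}_{i}\to 0$ as the grid is refined. The hypothesis $\sigma_{A,i}=\sigma_{C,i}$ enters precisely here: it lets the jointly-bounded $A$- and $C$-perturbations, which are tied to the single multiplier $\alpha_{i}$ in the construction of $\Pi^{1}_{i}$, be absorbed without a weighting mismatch. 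Combining $\Pi^{2}_{i}\ge\delta I$ with $\Pi^{3}_{i}\le c\,\sigma_{A,B,C,Q}\,I$ gives $\Pi^{2}_{i}-\Pi^{3}_{i}>0$ whenever $\sigma_{A,B,C,Q}<\delta/c$, which by Proposition \ref{1552pro} is exactly \eqref{1171PLMI}.

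The main obstacle is to make the margin $\delta$ uniform, both across the blocks $i$ and as the grid is refined, so that the threshold $\delta/c$ does not collapse in the limit. This hinges on controlling $\Xi(\bar{h}_{i})$ and $X_{i}$ uniformly: because the fixed solution $P$ of \eqref{1877noLinearEP} in Assumption \ref{ass1636} has a fixed positive margin and $P_{c}\to P$ uniformly as $\varepsilon_{P}\to 0$, the perturbed matrix $\Xi$ assembled from $P_{c}$ stays bounded below by a fixed positive multiple of the identity, while $X_{i}=\mathcal{E}(P_{c})(\bar{h}_{i})$ stays uniformly bounded above and below through the bounds on $\|P_{c}\|_{\mathcal{H}^{n\times n}_{\infty}}$. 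Establishing these uniform estimates, rather than the mere pointwise positivity at the sample points, is where the real work lies.
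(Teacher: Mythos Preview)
Your overall plan is sound and matches the paper's strategy: take $X_{i}=\mathcal{E}(P_{c})(\bar h_{i})$, establish a uniform lower bound on $\Pi^{2}_{i}$, and then pick $\alpha_{i},\beta_{i},\rho_{i}$ of order $\sigma_{A,B,C,Q}$ so that $\Pi^{3}_{i}\to 0$; Proposition~\ref{1552pro} then yields \eqref{1171PLMI}. The paper uses exactly the same $X_{i}$ and the same equivalence $\Pi^{2}_{i}-\Pi^{3}_{i}>0$.

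Where your execution differs is in the $\Pi^{2}_{i}$ step. You evaluate $\Xi^{2}_{i}(\cdot)$ at the sample point $\bar h_{i}$ (where indeed $\Pi^{2}_{i}=\Xi^{2}_{i}(\bar h_{i})$) and invoke continuity of $\Xi$ on the interior of $\Lambda_{i}$ to push the $\mu$-a.e.\ bound $\Xi(\ell)\ge\xi I$ to the single point $\bar h_{i}$. The paper instead keeps $\ell\in\Lambda_{i}$ arbitrary: it writes $\diag\{X_{i},I,I,I\}\,\Xi(\ell)\,\diag\{X_{i},I,I,I\}=\Xi^{8}_{i}(\ell)+\diag\{X_{i}\mathcal{E}(P_{c})(\ell)^{-1}X_{i}-X_{i},0,0,0\}$, bounds the second term by $O(\sigma_{A,B,C,Q})$ via $\mathcal{E}(P_{c})(\ell)-X_{i}=[Q(\ell)-Q_{i}]\Upsilon[Q(\ell)+Q_{i}]^{T}$, and then passes to $\Pi^{2}_{i}$ through a Schur-complement argument involving an off-diagonal block $\Xi^{9}_{i}(\ell)$ containing $A(\ell)-A_{i}$, $B(\ell)-B_{i}$, $C(\ell)-C_{i}$, $Q(\ell)-Q_{i}$. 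Your route is shorter because the perturbation blocks vanish at $\ell=\bar h_{i}$; the paper's route avoids needing pointwise positivity at $\bar h_{i}$ and works purely with a.e.\ bounds (the final inequality concerns the constant matrix $\Pi^{2}_{i}$, so an a.e.\ bound suffices).

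Two remarks. First, the hypothesis $\sigma_{A,i}=\sigma_{C,i}$ is \emph{not} used where you say. In your argument all five blocks of $\Pi^{3}_{i}$ are $O(\sigma_{A,B,C,Q})$ regardless of that equality; in fact your pointwise-evaluation route never uses the hypothesis at all. In the paper it enters at a different place: it is what allows one to factor $\Xi^{9}_{i}(\ell)=\Xi^{10}_{i}(\ell)\,\diag(-\sigma_{A,i}I,-\sigma_{B,i}I,-\sigma_{Q,i}I)$ and hence deduce $\Xi^{9}_{i}\Xi^{9}_{i}{}^{T}\le\sigma_{A,B,C,Q}^{2}\,\Xi^{10}_{i}\Xi^{10}_{i}{}^{T}$. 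Second, for your uniform margin you should also note that the final Schur-complement lift from $\Xi^{1}_{i}(\bar h_{i})\ge c_{1}I$ and $\Upsilon\ge c_{2}I$ to $\Pi^{2}_{i}\ge c_{3}I$ is uniform in $N$ because the coupling block is $Q_{i}\Upsilon$ with $\|Q_{i}\|=1$ (from $\sum_{j}q_{j}(\bar h_{i})=1$); this is the one place where growth of the block size with $N$ could otherwise spoil the estimate.
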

\begin{proof}
See Appendix \ref{AppD}.
\end{proof}
\begin{remark}
If Assumption \ref{continuousAss1535} is satisfied,
it is always possible to seek a suitable
partition $H=\{h_0, h_1,\cdots, h_N\}$ and discrete points $\{\bar{h}_{i},\ i\in\overline{1, N}\}$ such that $\sigma_{A,B,C,Q}$ is sufficiently small.
\end{remark}

Now,  by linking Lemmas \ref{step1} and \ref{lemma1862},  we are ready to conclude that under certain assumptions, the gridding method proposed in Proposition \ref{promain} is feasible.
\begin{theorem}\label{SCI3the1590}
Given scalar $\gamma>0$, suppose that Assumption \ref{ass1139finitegrid} is fulfilled with $\sigma_{A,i}=\sigma_{C,i},\ i\in\overline{1, N}$ and Assumptions \ref{continuousAss1535}-\ref{ass1636} are fulfilled. If the approximation error $\varepsilon_{P}$ is sufficiently small,
then there exist a suitable
partition $H=\{h_0, h_1,\cdots, h_N\}$ and discrete points $\{\bar{h}_{i},\ i\in\overline{1, N}\}$
such that the LMI-Problem stated in Definition \ref{LMIfeasP} is feasible.
\end{theorem}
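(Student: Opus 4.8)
The plan is to chain the two preparatory lemmas, Lemma \ref{step1} and Lemma \ref{lemma1862}, treating Assumption \ref{continuousAss1535} as the bridge that lets their hypotheses be met by one common partition. The argument proceeds in two stages: first certify that the piecewise-constant candidate $P_c$ is itself an admissible solution of \eqref{1877noLinearEP}, then certify that a sufficiently fine partition drives the quantization errors down far enough for the LMIs \eqref{1171PLMI} to be solvable with those same data.

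First I would invoke item $(i)$ of Assumption \ref{ass1636} to fix the solution $P\in\mathcal{H}_{\infty}^{n+*}$ of \eqref{1877noLinearEP}, and item $(ii)$ to fix the piecewise-constant approximant $P_c$ together with its blocks $\{P_i\}_{i\in\overline{1,N}}$ satisfying $\|P-P_c\|_{\mathcal{H}^{n\times n}_{\infty}}\le\varepsilon_P$. Because the hypothesis places $\varepsilon_P$ sufficiently small, Lemma \ref{step1} applies directly and yields that $P_c$ solves \eqref{1877noLinearEP}. This is exactly the standing condition under which Lemma \ref{lemma1862} is formulated.

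Next I would secure the remaining hypothesis of Lemma \ref{lemma1862}, namely that $\sigma_{A,B,C,Q}$ be sufficiently small. Assumption \ref{continuousAss1535} makes this routine: $A,B,C$ are continuous on the compact interval $\Theta_{[a,b]}=[a,b]$ and $g$ is continuous on $[a,b]\times[a,b]$, hence uniformly continuous, and $Q$ is continuous as well since each $q_i(\ell)=\int_{\Lambda_i}g(\ell,t)\,\mu(dt)$ depends continuously on $\ell$ and the square root is continuous. Uniform continuity on the compact interval then guarantees, as already recorded in the remark after Lemma \ref{lemma1862}, that by shrinking the mesh $\max_i(h_i-h_{i-1})$ and placing each $\bar h_i$ in $\Lambda_i$ one can force $\sigma_{A,i},\sigma_{B,i},\sigma_{C,i},\sigma_{Q,i}$, and therefore $\sigma_{A,B,C,Q}$, below any prescribed level. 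Refining the partition only sharpens the approximation of $P$, so it cannot enlarge $\varepsilon_P$; hence both smallness requirements are met simultaneously by a single, sufficiently fine partition.

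With $P_c$ solving \eqref{1877noLinearEP} and $\sigma_{A,B,C,Q}$ small, Lemma \ref{lemma1862} produces nonsingular $X_i\in\mathbb{R}^{n\times n}$ and scalars $\alpha_i,\beta_i,\rho_i>0$ for which \eqref{1171PLMI} holds with the $P_i$ of Assumption \ref{ass1636}, which is precisely feasibility of the LMI-Problem of Definition \ref{LMIfeasP}. The only genuinely delicate point is the joint control of the two scales $\varepsilon_P$ and $\sigma_{A,B,C,Q}$ by a common partition; I expect this to be the main obstacle, and it is resolved by observing that partition refinement sends both to zero without conflict, so some mesh is always fine enough to clear the thresholds demanded by Lemma \ref{step1} and Lemma \ref{lemma1862} at once.
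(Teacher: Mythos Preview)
Your proposal is correct and follows exactly the approach the paper takes: the paper's justification of Theorem~\ref{SCI3the1590} is the single sentence ``by linking Lemmas~\ref{step1} and~\ref{lemma1862}'' together with the remark that Assumption~\ref{continuousAss1535} guarantees a partition with arbitrarily small $\sigma_{A,B,C,Q}$. Your additional observation that passing to a refinement cannot enlarge $\varepsilon_P$ (since one may keep $P_c$ unchanged by inheriting each $P_i$ on sub-subintervals) is a useful clarification that the paper leaves implicit.
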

\begin{remark}
According to Proposition \ref{promain} and Theorem \ref{SCI3the1590}, it is clear that under certain assumptions, if $\varepsilon_{P}$ and $\sigma_{A,B,C,Q}$ are sufficiently small,
the conservatism of the sufficient condition in Proposition \ref{promain} will become arbitrarily small.
\end{remark}

\section{An Application to NCSs}\label{SecApptoNCS}
\begin{figure}
\centering
\vspace{-5pt}
\includegraphics[width=0.32\textwidth]{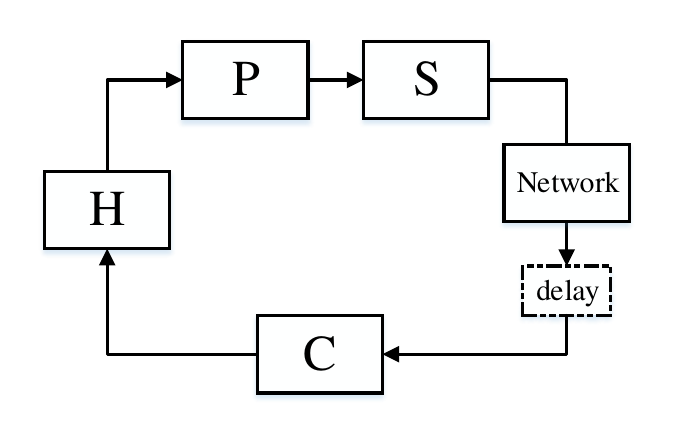}\\
\vspace{-5pt}
\caption{Networked control system.}\label{NCSs1918}
\vspace{-5pt}
\end{figure}

Consider the NCS setup shown in Fig. \ref{NCSs1918},
where the plant $P$ is a linear continuous-time system described by
\begin{equation}\label{conSys1925}
\dot{x}_{c}(t)=A_{c}x_{c}(t)+B_{c}u_{c}(t),\  x_{c}(0)=x_{0}.
\end{equation}
The sampler $S$ implements a
periodic sampling scheme with period $L$ and records the state $x_{c}$ at $t=kL,\ k\in\mathbb{N}$. The controller $C$ receives the sampled state $x_{c}(kL)$ at $t=kL+\tau(k)$, where $\tau(k)$ denotes the random network-induced delay that is assumed to smaller than the sampling period $L$ and can be known on-line.
Through the hold device $H$, the control signal in the system performs as
\begin{equation}\label{1930Control}
u_{c}(t)=
\begin{cases}
0, & t\in[0,L), \\
u(k-1), &  t\in[kL,kL+\tau(k)), \\
u(k), & t\in[kL+\tau(k),(k+1)L)
\end{cases}
\end{equation}
with
$
u(k)=
(K+\Delta)x_{c}(kL),\ k\in\mathbb{N}^{+}.
$
Here, $\Delta$ is the uncertain perturbation of the controller parameters due to physical environmental limitations,
which may include the imprecision arising from analog-to-digital and digital-to-analog conversions,
finite word length, and limited resolution measuring instruments
and roundoff errors in numerical computations (see \cite{Keel1997}).
Consider the sampled-data system composed of
\eqref{conSys1925} and \eqref{1930Control}.
One can obtain that
$x_{c}(L)=e^{A_c  L}x_{0}$
and for $k\in\mathbb{N}^{+}$,
\begin{align}
& x_{c}((k+1)L)=\int_0^{\tau(k)} e^{A_c(L-t)} B_c  u(k-1)d t \label{xc1956}\\
 &+e^{A_c L} x_{c}(kL)+\int_{0}^{L-\tau(k)} e^{A_c( L-\tau(k)-t)} B_c  u(k) d t.\nonumber
\end{align}
Let
$x_{d}(k+1)=\left[
        \begin{array}{cc}
          x_{c}(kL)^{T} &
          x_{c}((k+1)L)^{T} \\
        \end{array}
      \right]^{T}, k\in\mathbb{N}.$
Then the evolution of $x_{d}(k)$ can be described as
\begin{equation}\label{Systemxx}
\begin{split}
x_{d}&(k+1)=A_{d}(\tau(k))x_{d}(k)\\
&+B_{d}(\tau(k))\Delta (\tau(k)) C_{d}(\tau(k)) x_{d}(k),
\end{split}
\end{equation}
with the initial condition
$x_{d}(1)=\left[
        \begin{array}{cc}
          x_{0}^{T} &
          x_{c}(L)^{T} \\
        \end{array}
      \right]^{T},$
where
\begin{align*}
& A_{d}(\tau(k))=\left[
   \begin{array}{cc}
     0 & I \\
     W_{1}(\tau(k)) K &\  e^{A_c  L}+W_{2}(\tau(k)) K \\
   \end{array}
 \right],\nonumber\\
& B_{d}(\tau(k))=\left[
   \begin{array}{cc}
     0 & 0 \\
     W_{1}(\tau(k))  &W_{2}(\tau(k))  \\
   \end{array}
 \right], \nonumber\\
& \Delta (\tau(k))=
\left[
  \begin{array}{cc}
    \Delta & 0 \\
    0 & \Delta \\
  \end{array}
\right],\
C (\tau(k))=\left[
  \begin{array}{cc}
    I & 0 \\
    0 & I \\
  \end{array}
\right], \nonumber\\
&W_{1}(\tau(k))=\int_0^{\tau(k)} e^{A_c(L-t)} d t B_c,
\nonumber \\
&W_{2}(\tau(k))=\int_{0}^{L-\tau(k)} e^{A_c( L-\tau(k)-t)} d t B_c,\ k\in\mathbb{N}^{+}.
\end{align*}

We will proceed to present two examples that demonstrate the practicality of the results obtained in this paper. To simplify the model, the coefficients of plant $P$ are set to $A_{c}=0.2$ and $B_{c}=0.8,$ the state $x_{0}\in\mathbb{R}^{1}$, and the sampling period is fixed as $L=1.$ Regarding the controller parameters,
$K=-1.2$ is adopted and the uncertain perturbation is defined to satisfy $\Delta\in\mathbb{R}^{1}$.

In some real-world communication systems, the current delay $\tau(k)$ is usually determined by the previous delay $\tau(k-1)$. Therefore, modeling $\{\tau(k), k \in \mathbb{N}^{+}\}$ introduced in Fig. \ref{NCSs1918} as a Markov chain is reasonable (see \cite{ZhangLiqian2005, ShiYang2009TAC}).
In Example \ref{2073exam}, the sequence of network-induced delays $\{\tau(k), k \in \mathbb{N}^{+}\}$ is characterized as a two-mode Markov chain.

\begin{example}\label{2073exam}
In this example,
the sequence of network- induced delays $\{\tau(k), k \in \mathbb{N}^{+}\}$
is modeled as a homogeneous Markov chain that
takes values in the set $\Theta_{2}=\{0.1, 0.3\}.$ The Markov chain is determined by
the initial distribution $\pi_{i}=\mathbb{P}(\tau(1)=i)=0.5$
and the transition probability function
$$p_{ij}=\mathbb{P}(\tau(k+1)=j|\tau(k)=i)=
\begin{cases}
\frac{2}{3}, &  i=j, \\
\frac{1}{3},& i\neq j,
\end{cases}
$$
where $i,j\in\Theta_{2}$ and $k\in\mathbb{N}^{+}.$
Consider the nominal system of \eqref{Systemxx} as follows:
\begin{equation}\label{SystemNor2100}
x_{d}(k+1)=A_{d}(\tau(k))x_{d}(k),\ k\in\mathbb{N}^{+}.
\end{equation}
For any $P(i)\in \mathbb{R}^{2\times 2}$, $i\in\Theta_{2}$, and $j\in\Theta_{2}$,
denote
$\mathcal{L}_{A_{d}}(P)(j)
=\Sigma_{i\in\Theta_{2}}p_{ij}A_{d}(i)P(i)A_{d}(i)^{T}.$
According to Theorem 3.9 in \cite{BookCosta2005},
we have that \eqref{SystemNor2100} is EMSS due to that the spectral radius $r_{\sigma}(\mathcal{L}_{A_{d}})= 0.2655<1.$
Furthermore,
by Proposition \ref{finiteMarkovchain135}, it can be obtained that the uncertain system \eqref{Systemxx} is EMSS when the uncertainty $\Delta$ satisfies $\|\Delta\|\leq 0.6803.$
\end{example}

In Example \ref{2073exam}, we employed a finite state Markov chain to model the sequence of transmission delays. This restricts delays values to a countable discrete set, and the model is highly aggregated. From a practical standpoint,
the transmission delays occurring in the NCSs depicted in Fig. \ref{NCSs1918} usually take values within an interval $[\tau_{\min}, \tau_{\max}]$ (see \cite{Ioannis2014} and \cite{Masashi2018}). Consequently, modeling the sequence of delays as a continuous-valued Markov chain is more suitable for real-world applications.

\begin{example}\label{exampledelay2}
In this example, the sequence of network- induced delays  $\{\tau(k),k\in\mathbb{N}^{+}\}$ is modeled as a Markov chain that
takes values in the set $\Theta_{[0, 0.4]}=[0, 0.4].$
The Markov chain is determined by the initial distribution  $\mu_{1}(\Lambda)=\mathbb{P}(\tau(1)\in \Lambda)= 2.5\mu_{L}(\Lambda)$, $\Lambda\in\mathcal{B}(\Theta_{[0, 0.4]})$
and the stochastic kernel described by a density function
$$
g(t,s)=
\begin{cases}
\frac{5 s}{t}, & 0 \leq s<t \leq 0.4 , \\
5, &  0 \leq t=s \leq 0.4 , \\
\frac{5(2-5 s)}{2-5 t}, &  0 \leq t<s \leq 0.4 ,
\end{cases}
$$
where $\mu_{L}(\cdot)$ is the Lebesgue measure on $\mathcal{B}(\Theta_{[0, 0.4]})$.
In the following, we will apply Theorem \ref{finitegrid1166} to analyze the robust stability of the discrete-time MJLS \eqref{Systemxx}, which is equipped with the continuous-valued Markov chain $\{\tau(k),k\in\mathbb{N}^{+}\}$. Before that, we first
partition the state set of the Markov chain following the gridding method proposed in Subsection \ref{SecRobustofConCase}:
Subdivide the interval $\Theta_{[0, 0.4]}$ into 20 subintervals $\{\Lambda_{i},i\in\overline{1, 20}\}$ of  equal length $0.02,$
and designate
the midpoint of each subinterval as $\{\bar{h}_{i},\ i\in\overline{1, 20}\}.$
Through careful analysis and computation, it can be established that Assumption \ref{ass1139finitegrid} is satisfied when the scalars $\sigma_{A,i}, \sigma_{B,i}, \sigma_{C,i}$ and $\sigma_{Q,i}$ take the values indicated in the Fig. \ref{Xigmai}. Fix $\gamma = 3.1$.
By utilizing the LMI toolbox, the feasibility of LMI \eqref{1171PLMI} is then confirmed when the scalars $\alpha_{i}, \beta_{i},$ and $\rho_{i}$ are chosen as in Fig. \ref{AlphaBetaRho}, as will as the matrices $X_i$ and $P_i$ are given as in Fig. \ref{Xi} and Fig. \ref{Pi}, respectively, $i\in\overline{1, 20}.$

\begin{figure}[h!]
\centering
\vspace{-5pt}
\includegraphics[width=0.4\textwidth]{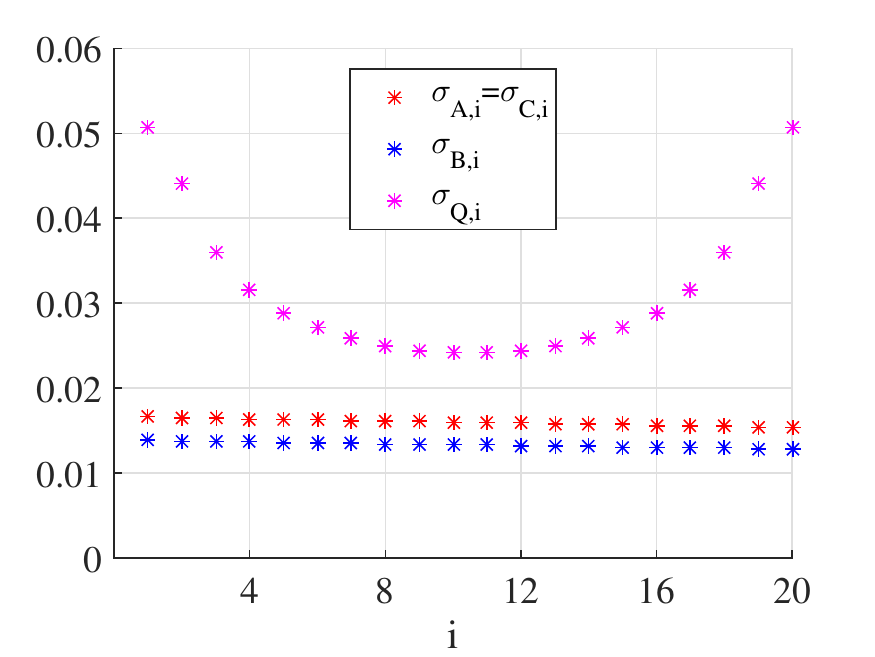}\\
\vspace{-5pt}
\caption{$\sigma_{A,i}, \sigma_{B,i}, \sigma_{C,i}$ and $\sigma_{Q,i}$.}\label{Xigmai}
\end{figure}

\begin{figure}[h!]
\centering
\vspace{-5pt}
\includegraphics[width=0.4\textwidth]{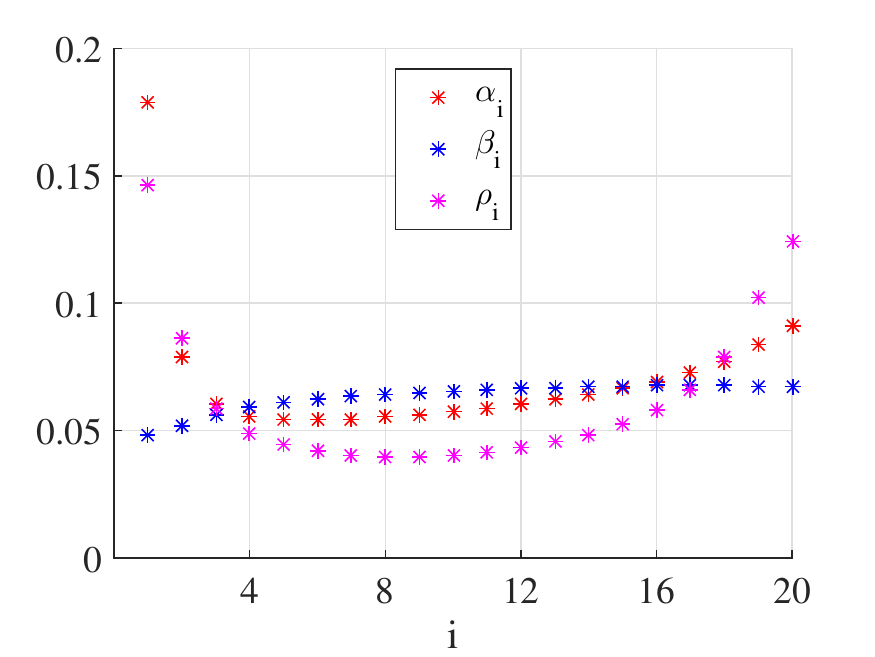}\\
\vspace{-5pt}
\caption{$\alpha_{i}, \beta_{i}$ and $\rho_{i}$.}\label{AlphaBetaRho}
\end{figure}

\begin{figure}[h!]
\centering
\vspace{-5pt}
\includegraphics[width=0.4\textwidth]{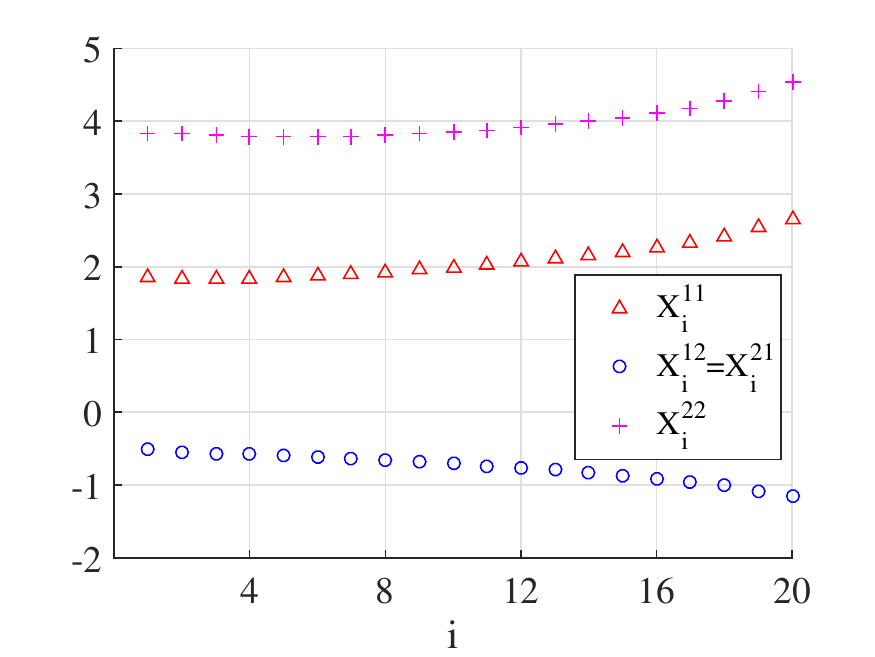}\\
\vspace{-5pt}
\caption{$X_{i}$. $X_{i}^{hg}$ is the $(h,g)$th entry of the matrix $X_{i}$, $h,g\in\{1,2\}.$}\label{Xi}
\end{figure}

\begin{figure}[h!]
\centering
\vspace{-5pt}
\includegraphics[width=0.4\textwidth]{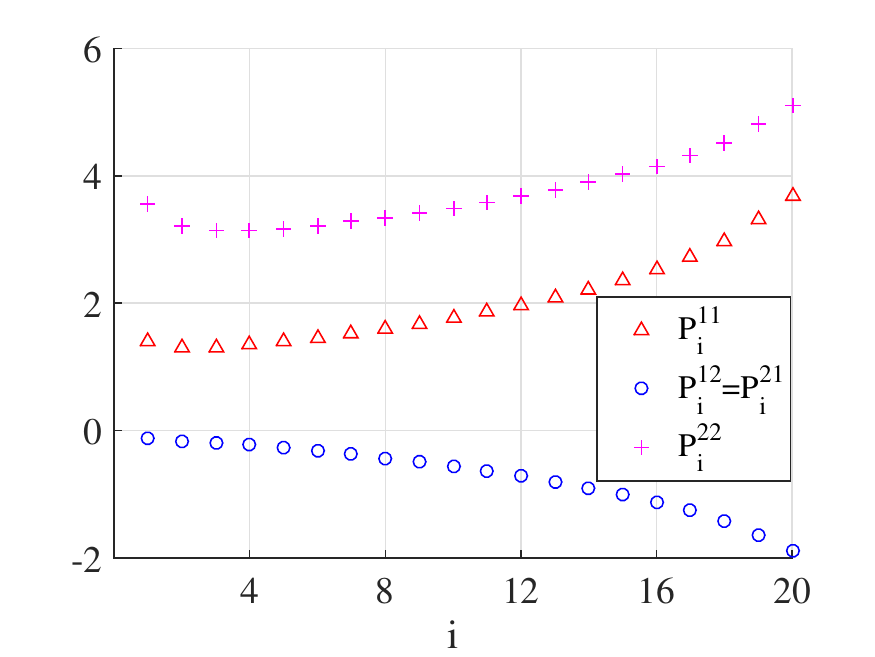}\\
\vspace{-5pt}
\caption{$P_{i}$.}\label{Pi}
\end{figure}

According to Theorem  \ref{finitegrid1166}, when the uncertainty $\Delta$ satisfies $\|\Delta\|\leq 0.568$,
the uncertain system \eqref{Systemxx} is EMSS, and it can be further known that the system \eqref{xc1956} is also EMSS.  Now let the initial value $x_{0}=-2.$  For different uncertainties $\Delta\in\{-0.568,-0.4,-0.2,0,0.2,0.4,0.568\},$ some state trajectories of the system \eqref{xc1956} governed by the control law $u(k)=(K+\Delta)x_{c}(k)$
are presented in Fig. \ref{xck}, while the mean values of the system state $x_{c}(k)$ are plotted in Fig. \ref{meanxck}.

\begin{figure}[h!]
\centering
\vspace{-5pt}
\includegraphics[width=0.4\textwidth]{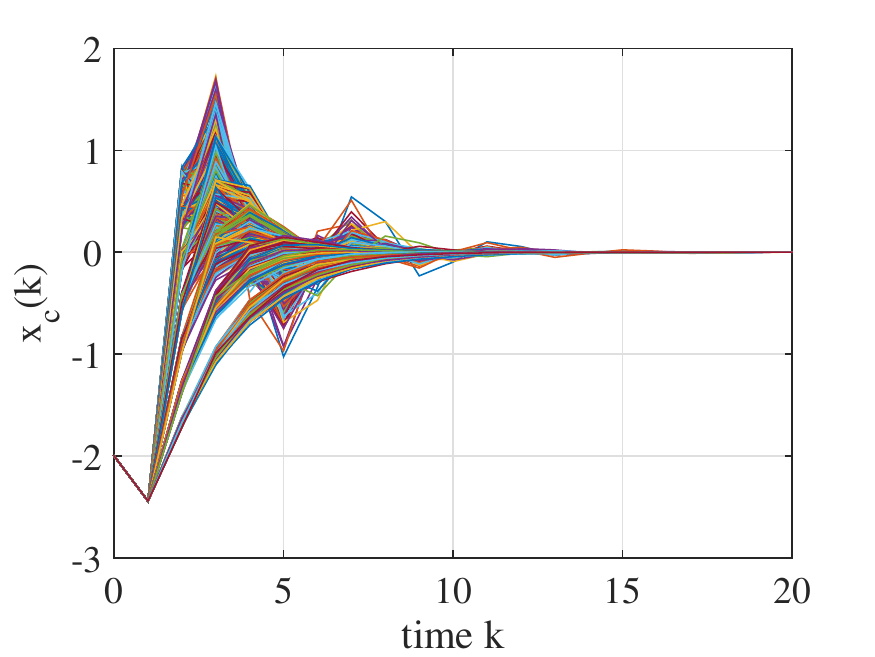}\\
\vspace{-5pt}
\caption{Some sample trajectories of the closed-loop system of \eqref{xc1956}.}\label{xck}
\end{figure}

\begin{figure}[h!]
\centering
\vspace{-5pt}
\includegraphics[width=0.4\textwidth]{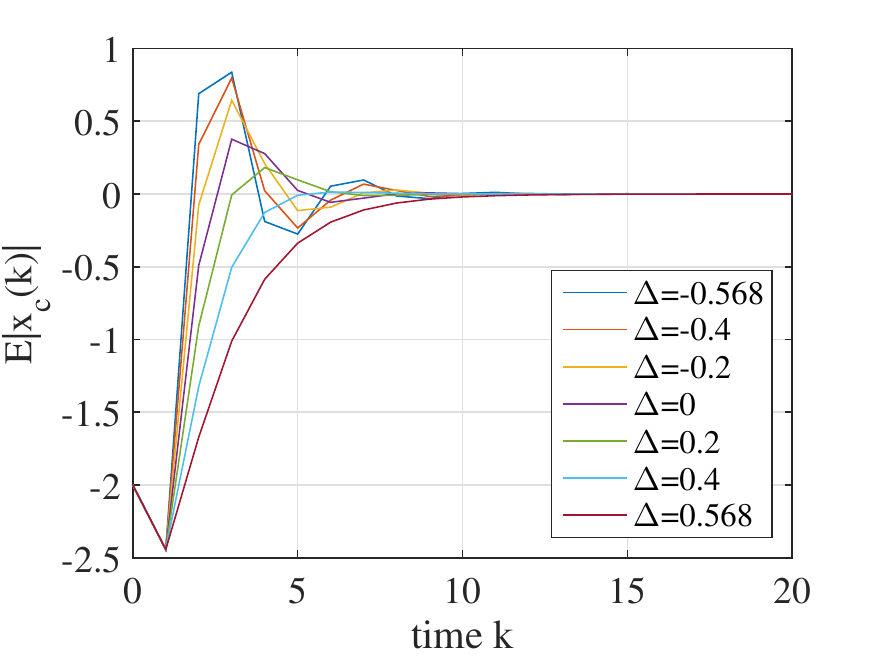}\\
\caption{The mean values of the state $x_{c}(k)$ for different uncertainties.}\label{meanxck}
\end{figure}

\end{example}

\section{Conclusions}\label{SecConclusions}
In this paper, we have analyzed the robustness of MJLSs which evolve in a discrete-time domain and are governed by the Markov chain on a Borel space.
With the help of the infinite-dimensional operator theory, an extension of the small gain theorem has been built based on the given BRL and has subsequently been exploited to give a lower bound for what we define as the stability radius. Regarding systems with continuous-valued Markov jump parameters, we have proposed a griding method and given rigorous proof of its feasibility and conservatism. This method has provided a sufficient condition for the existence of uniform positive solutions to a coupled inequality associated with $H_{\infty}$ analysis, expressed in terms of a finite number of LMIs.
This transforms the challenging infinite-dimensional problem into a finite-dimensional one, for which the resolution is obviously achievable.
Given this result, the robust stability of the systems can be determined by addressing an LMI feasibility problem.
Within the finite MJLS framework, the adjoint method discussed in \cite{Todorov2011SIAM} may provide a less conservative condition for the robust stability of continuous-time systems (\cite{Todorov2013Auto}), where the adjoint relationship is defined for operators acting on a Hilbert space. However, for discrete-time MJLSs with the Markov chain on a Borel space, the corresponding methods still require further investigation.

\begin{appendices}
\section{Proof of Proposition \ref{Schurcom}}\label{AppA}
$(iii)\Rightarrow (i)$:
Clearly, $P\in \mathcal{SH}^{(n+m)}$. Introduce $Q_{1}=\{Q_{1}(\ell)\}_{\ell\in\Theta}\in\mathcal{H}^{n\times m}_{\infty}$ defined as $Q_{1}(\ell)=P_{1}(\ell)^{-1}P_{2}(\ell).$
Noticing that for any $\ell\in\Theta$,
\begin{equation}\label{fenjie421}
\left[
       \begin{array}{cc}
         I & 0 \\
        Q_{1}(\ell)^{T}& I \\
       \end{array}
\right]
     \left[
       \begin{array}{cc}
         P_{1}(\ell) & 0 \\
        0& W_{1}(\ell) \\
       \end{array}
     \right]
\left[
       \begin{array}{cc}
         I & Q_{1}(\ell) \\
        0& I \\
       \end{array}
\right]
=P(\ell)
\end{equation}
with $W_{1}(\ell)= P_{3}(\ell)-P_{2}(\ell)^{T}P_{1}(\ell)^{-1}P_{2}(\ell),$
we arrive at
$\|P(\ell)\|_{\mathbb{R}^{(n+m)\times (n+m)}}\leq
\|P_{1}(\ell)\|_{\mathbb{R}^{n\times n}} \|W_{1}(\ell)\|_{\mathbb{R}^{m\times m}}
\leq\|P_{1}\|_{\mathcal{H}^{n\times n}_{\infty}}
\|W_{1}\|_{\mathcal{H}^{m\times m}_{\infty}}
\ \mu\text{-}a.e.,$
 which implies that $P\in \mathcal{SH}^{(n+m)}_{\infty}.$
Next, we will show that $P(\ell)\gg 0$  $\mu$-$a.e..$ It follows from $(iii)$ that  there exist $\xi_{1}>0$ and $\xi_{2}>0$ such that
$P_{1}(\ell)\geq \xi_{1} I$ $\mu$-$a.e.$ and $W_{1}(\ell)\geq \xi_{2} I \ \mu\text{-}a.e..$
Thus, by taking $\xi_{3}=\min\{\xi_{1}, \xi_{2}\}$ and bearing \eqref{fenjie421} in mind, we obtain that
$$
P(\ell)\geq \xi_{3}
\left[
       \begin{array}{cc}
         I & Q_{1}(\ell) \\
        Q_{1}(\ell)^{T}& I+Q_{1}(\ell)^{T}Q_{1}(\ell) \\
       \end{array}
\right]
\ \mu\text{-}a.e..
$$
On the other hand, observe that for any $\xi_{4}\in(0,1),$
\begin{equation*}
\begin{split}
&\left[
       \begin{array}{cc}
         I & Q_{1}(\ell) \\
        Q_{1}(\ell)^{T}& I+Q_{1}(\ell)^{T}Q_{1}(\ell) \\
       \end{array}
\right]-\xi_{4}I=\\
&\left[
       \begin{array}{cc}
         I & 0 \\
        \frac{1}{1-\xi_{4}}Q_{1}(\ell)^{T}& I \\
       \end{array}
\right]
\left[
       \begin{array}{cc}
         I-\xi_{4}I & 0 \\
         0& W_{2}(\ell) \\
       \end{array}
\right]
\left[
       \begin{array}{cc}
         I & \frac{1}{1-\xi_{4}}Q_{1}(\ell) \\
        0& I \\
       \end{array}
\right]> 0
\end{split}
\end{equation*}
$\mu\text{-}a.e.,$
where $W_{2}(\ell)= I-\xi_{4}I+(1-\frac{1}{1-\xi_{4}})Q_{1}(\ell)^{T}Q_{1}(\ell).$ Now one can choose $\xi=\xi_{3}\xi_{4}>0$ such that $P(\ell)\geq \xi I$  $\mu$-$a.e.$, and thus $(iii)\Rightarrow (i)$ is proved.\\
$(i)\Rightarrow (ii)$: From $(i)$ we know that there exists $\xi>0$ such that $P(\ell)\geq \xi I$  $\mu$-$a.e..$
Furthermore,
$$P_{3}(\ell)=\left[
                \begin{array}{cc}
                  0 & I \\
                \end{array}
              \right]
              P(\ell)
              \left[
                \begin{array}{c}
                  0 \\
                  I \\
                \end{array}
              \right]
\geq \xi \left[
                \begin{array}{cc}
                  0 & I \\
                \end{array}
              \right]
              \left[
                \begin{array}{c}
                  0 \\
                  I \\
                \end{array}
              \right]
=\xi I \ \mu\text{-}a.e.,
$$
which means $P_{3}\in \mathcal{H}^{m+*}_{\infty}$.
Now define $Q_{2}(\ell):=-P_{3}(\ell)^{-1}P_{2}(\ell)^{T},$
$W_{3}(\ell):=P_{1}(\ell)-P_{2}(\ell)P_{3}(\ell)^{-1}
P_{2}(\ell)^{T},$ $\ell\in\Theta.$  Here, it is worth emphasizing that
$P_{3}\in \mathcal{H}^{m+*}_{\infty}$ ensures  $P_{3}^{-1}\in\mathcal{H}^{m+*}_{\infty}.$
Clearly,
$Q_{2}\in\mathcal{H}^{m\times n}_{\infty}$ and
$W_{3}\in \mathcal{SH}^{n}_{\infty}$.
Note that
\begin{equation*}
\begin{split}
&\left[
    \begin{array}{cc}
      W_{3}(\ell) & 0 \\
      0 & P_{3}(\ell) \\
    \end{array}
  \right]
=\left[
   \begin{array}{cc}
     I & Q_{2}(\ell)^{T} \\
     0 & I \\
   \end{array}
 \right]
P(\ell)
\left[
   \begin{array}{cc}
     I & 0 \\
     Q_{2}(\ell) & I \\
   \end{array}
 \right]\\
 & \geq \xi
\left[
       \begin{array}{cc}
         I+Q_{2}(\ell)^{T}Q_{2}(\ell) & Q_{2}(\ell)^{T} \\
        Q_{2}(\ell)& I \\
       \end{array}
\right]\ \mu\text{-}a.e..
\end{split}
\end{equation*}
Also, for any $\xi_{5}\in(0,1),$
\begin{equation*}
\begin{split}
&\left[
       \begin{array}{cc}
         I+Q_{2}(\ell)^{T}Q_{2}(\ell) & Q_{2}(\ell)^{T} \\
        Q_{2}(\ell)& I \\
       \end{array}
\right]-\xi_{5}I
=\\
&\left[
       \begin{array}{cc}
         I & \frac{1}{1-\xi_{5}}Q_{2}(\ell)^{T} \\
        0& I \\
       \end{array}
\right]
\left[
       \begin{array}{cc}
         W_{4}(\ell) & 0 \\
         0& I-\xi_{5}I \\
       \end{array}
\right]
\left[
       \begin{array}{cc}
         I & 0 \\
        \frac{1}{1-\xi_{5}}Q_{2}(\ell) & I \\
       \end{array}
\right]> 0
\end{split}
\end{equation*}
holds $\mu$-$a.e.$,
where $W_{4}(\ell)= (1-\frac{1}{1-\xi_{5}})Q_{2}(\ell)^{T}Q_{2}(\ell)+I-\xi_{5}I.$ Therefore, there is $\xi_{6}=\xi\cdot\xi_{5}>0$ such that $W_{3}(\ell)\geq \xi_{6} I \ \mu\text{-}a.e.$, which shows the validity of the implication $(i)\Rightarrow (ii)$.\\
$(ii)\Rightarrow (iii)$: It can be deduced
from $(ii)$ that there exist $\xi_{6}>0$ and $\xi_{7}>0$ such that $W_{3}(\ell)\geq \xi_{6} I\ \mu\text{-}a.e.$ and $P_{3}(\ell)\geq \xi_{7} I\ \mu\text{-}a.e.$.
Then,
$P_{1}(\ell)
\geq P_{2}(\ell)P_{3}(\ell)^{-1}
P_{2}(\ell)^{T}+\xi_{6} I\geq \xi_{6} I\ \mu\text{-}a.e.,$ which yields that $P_{1}\in \mathcal{H}^{n+*}_{\infty},$ and further we have $W_{1}\in \mathcal{SH}^{m}_{\infty}$.
Note that
$$
\left[
       \begin{array}{cc}
         P_{1}(\ell) & 0 \\
        0& W_{1}(\ell) \\
       \end{array}
     \right]
=
U(\ell)^{T}
\left[
    \begin{array}{cc}
      W_{3}(\ell) & 0 \\
      0 & P_{3}(\ell) \\
    \end{array}
\right]U(\ell) \ \mu\text{-}a.e.,
$$
where
$
U(\ell)=\left[
    \begin{array}{cc}
      I & 0 \\
      -Q_{2}(\ell) & I \\
    \end{array}
  \right]
\left[
    \begin{array}{cc}
      I & -Q_{1}(\ell) \\
      0 & I \\
    \end{array}
  \right],
$
and it can be confirmed that there exists $\xi_{8}>0$  such that $U(\ell)^{T}U(\ell)\geq \xi_{8}I \ \mu\text{-}a.e..$
Choosing $\xi_{9}=\min\{\xi_{6},\xi_{7}\}>0$, one can conclude that there exists $\xi_{10}=\xi_{8}\xi_{9}>0$ such that
$W_{1}(\ell)\geq \xi_{10} I \ \mu\text{-}a.e.,$
i.e., $W_{1}\in \mathcal{H}^{m+*}_{\infty}$.
\end{appendices}

\begin{appendices}
\section{Proof of Proposition \ref{341proposition}}\label{Pronondegen}
Prove $(i)$  by applying induction on $k$.
First, $(i)$ is trivially satisfied for $k=0$.
It follows from \eqref{ProMa206} that for each $k\in\mathbb{N}$, $\nu_{k}\in\mathcal{H}^{1+}_{1}$.
Supposing that $\nu_{k}(\ell)>0$ $\mu\text{-}a.e.$, we assert that $\nu_{k+1}(\ell)>0$ $\mu\text{-}a.e..$
If not, there exists $\bar{\Lambda}\in{\mathcal{B}(\Theta)}$ with $\mu(\bar{\Lambda})>0$ such that $\int_{\bar{\Lambda}}\nu_{k+1}(\ell)\mu(d\ell)=0$. Using Fubini's theorem, it holds that
\begin{equation}\label{369innu}
\int_{\Theta}\nu_{k}(t)\left[\int_{\bar{\Lambda}}g(t,\ell)\mu(d\ell)\right]\mu(dt)
=\int_{\bar{\Lambda}}\nu_{k+1}(\ell)\mu(d\ell)
=0.
\end{equation}
In view of the assumption that $\nu_{k}(\ell)>0$ $\mu\text{-}a.e.$,
we can derive via \eqref{369innu} that $\int_{\bar{\Lambda}} g(t,\ell)\mu(d\ell)=0$ is true for $\mu\text{-}$almost all $t\in\Theta$.
Hence,
$$\int_{\bar{\Lambda}}\int_{\Theta}g(t,\ell)\mu(dt)\mu(d\ell)
=\int_{\Theta}\int_{\bar{\Lambda}}g(t,\ell)
\mu(d\ell)\mu(dt)=0$$
follows by Fubini's theorem, which yields
$\int_{\Theta}g(t,\ell)\mu(dt)=0$
for $\mu\text{-}$almost all $\ell\in{\bar{\Lambda}}$ with $\mu(\bar{\Lambda})>0$,
which contradicts with the fact that $\int_{\Theta}g(t,\ell)\mu(dt)>0$ $\mu\text{-}a.e.$, and $(i)$ is proved.\\
The proof of $(ii)$ will be completed by  contradiction.
Define set
$$\bar{\Lambda}:=\{\ell\in\Theta|
\int_{\Theta}g(t,\ell)\mu(dt)=0\}.$$
Clearly, $\bar{\Lambda}\in\mathcal{B}(\Theta).$
If the implication is not valid, then $\mu(\bar{\Lambda})>0,$
and since $\nu_{k+1}(\ell)>0$ $\mu\text{-}a.e.$, according to Theorem 15.2 in  \cite{BookBillingsley1995}, we have that
\begin{equation}\label{394hatLc}
\int_{\bar{\Lambda}}\nu_{k+1}(\ell)\mu(d\ell)>0.
\end{equation}
From $\int_{\Theta}\int_{\bar{\Lambda}}g(t,\ell)\mu(d\ell)\mu(dt)
=\int_{\bar{\Lambda}}\int_{\Theta}g(t,\ell)\mu(dt)\mu(d\ell)
=0,$
it follows that $\int_{\bar{\Lambda}}g(t,\ell)\mu(d\ell)=0$
$\mu\text{-}a.e..$
Hence,
$$\int_{\bar{\Lambda}}\nu_{k+1}(\ell)\mu(d\ell)=
\int_{\Theta}\nu_{k}(t)\int_{\bar{\Lambda}}g(t,\ell)\mu(d\ell)
\mu(dt)
=0.$$
This contradicts with \eqref{394hatLc}. Thus, the proof of $(ii)$ is proved.
\end{appendices}

\begin{appendices}
\section{Proof of Lemma \ref{lemmaBRL}}\label{AppB}
$(i)\Leftrightarrow (ii)$ is established in \cite{Xiao2023}. \\
$(iii)\Rightarrow (ii):$
Since $(iii)$ is valid,  then
there exists $\check{P}=-\hat{P}\in{\mathcal{H}_{\infty}^{n-*}}$ satisfying $\mathcal{W}(\check{P})(\ell)\gg 0\ \mu\text{-}a.e.$
and such that
$\check{\Psi}_{3}^{\gamma}(\check{P})(\ell)\gg 0\ \mu\text{-}a.e.,$ where
\begin{equation*}
\begin{split}
&\mathcal{W}(\check{P})(\ell)=
-\check{P}(\ell)+\mathcal{T}_{A}(\check{P})(\ell)
-C(\ell)^{T}C(\ell)\\
&\ \ \ \ \ \ \ \ \ \ \ \
-\Psi_{2}(\check{P})(\ell)
\check{\Psi}_{3}^{\gamma}(\check{P})(\ell)^{-1}
\Psi_{2}(\check{P})(\ell)^{T}
\end{split}
\end{equation*}
and
\begin{equation*}
\check{\Psi}_{3}^{\gamma}(\check{P})(\ell)=\gamma I
-D(\ell)^{T}D(\ell)+\mathcal{T}_{B}(\check{P})(\ell),\ \ell\in\Theta.
\end{equation*}
Certainly, $\gamma I
-D(\ell)^{T}D(\ell)\gg 0 \ \mu\text{-}a.e..$
Now applying Proposition \ref{Schurcom}, we have that
there exists $\check{P}\in{\mathcal{H}_{\infty}^{n-*}}$
such that
\begin{equation*}
\left[
  \begin{array}{cc}
    -\check{P}(\ell)+\mathcal{T}_{A}(\check{P})(\ell)
-C(\ell)^{T}C(\ell) & \Psi_{2}(\check{P})(\ell) \\
    \Psi_{2}(\check{P})(\ell)^{T} & \check{\Psi}_{3}^{\gamma}(\check{P})(\ell) \\
  \end{array}
\right]\gg 0\ \mu\text{-}a.e..
\end{equation*}
Then it follows from  Corollary 2 in \cite{Xiao2024} that \eqref{lmiARE}
admits a unique stabilizing solution $\bar{P}$ satisfying $\Psi_{3}^{\gamma}(\bar{P})(\ell)\gg0\ \mu\text{-}a.e.$. \\
The proof of the implication $(i)\Rightarrow (iii)$ and  the relationship
$\hat{P}-\bar{P}\in\mathcal{H}_{\infty}^{n+*}$
can be referenced in the analogous arguments presented in \cite{BookDragan2010} for finite MJLSs, as well as in \cite{Souza1992SCL} for linear time-invariant systems.
\end{appendices}

\begin{appendices}
\section{Proof of Proposition \ref{usedinSMG}}\label{AppenPro2}
Analogously to the proof of Lemma 3 in \cite{Xiao2023}, it can be proved that when $\|\mathfrak{L}\|_{\infty}<1$, we have that $I-D(\ell)^{T}D(\ell)\geq \eta_{1} I\ \mu\text{-}a.e.$ for some $ 0<\eta_{1}<1.$ This implies that $\|D\|_{\mathcal{H}^{r\times r}_{\infty}}\leq \sqrt{1-\eta_1}$. Hence, from Theorem A.4.9 in \cite{Curtain1995}, we know that
$(\mathcal{I}-D)^{-1}$ exists and $$\|(\mathcal{I}-D)^{-1}\|_{\mathcal{H}^{r\times r}_{\infty}}\leq (1-\|D\|_{\mathcal{H}^{r\times r}_{\infty}})^{-1}<+\infty,$$
where $\mathcal{I}=\{\mathcal{I}(\ell)\}_{\ell\in\Theta}
\in\mathcal{SH}_{\infty}^{n}$ with $\mathcal{I}(\ell)=I$.
Moreover, it is obtained that $(\mathfrak{I}-\mathfrak{L})$ is invertible due to the hypothesis of $\|\mathfrak{L}\|_{\infty}<1$.
Based on Theorem 3 in \cite{Xiao2024},
the desired result can be established  by following similar arguments as in the proof of Theorem 4.1 in \cite{Aberkane2015SIAM}.

\end{appendices}

\begin{appendices}
\section{Proof of Lemma \ref{step1}}\label{AppC}
For any $P\in\mathcal{SH}_{\infty}^{n+}$, define the operator
\begin{equation*}
\begin{split}
W(P)(\ell)=
&P(\ell)-\Psi_{1}(P)(\ell)-\Psi_{2}(P)(\ell)
[\gamma I-\mathcal{T}_{B}(P)(\ell)]^{-1}\\
&\cdot
\Psi_{2}(P)(\ell)^{T},\ \ell\in\Theta_{[a,b]}.
\end{split}
\end{equation*}
Since $P\in\mathcal{H}_{\infty}^{n+*}$ is a solution to the inequality \eqref{1877noLinearEP}, according to Propositions \ref{Schurcom} and  \ref{pro1091},
there exist scalars $\xi_{0}>0$, $\xi_{1}>0$, and $\xi_{2}>0$ such that
$P(\ell)\geq \xi_{0} I,$
$\gamma I-\mathcal{T}_{B}(P)(\ell)\geq \xi_{1} I,$
and
$W(P)(\ell) \geq \xi_{2} I\ \mu\text{-}a.e..$
Pick $\varepsilon_{P}\in(0,\xi_{0}),$ and from $\|P-P_{c}\|_{\mathcal{H}^{n\times n}_{\infty}}\leq \varepsilon_{P}$, it follows that
$P_{c}\in \mathcal{H}^{n+*}_{\infty}.$
Note that $\|\mathcal{T}_{B}(P_{c}-P)(\ell)\|_{\mathbb{R}^{r\times r}}\leq  \varepsilon_{P}\|B\|_{\mathcal{H}_{\infty}^{n\times r}}^{2},$
and further let $\varepsilon_{P}$ satisfy $\xi_{3}>0$, where
$
\xi_{3}=\xi_{1}-\varepsilon_{P}\|B\|_{\mathcal{H}_{\infty}^{n\times r}}^{2}.
$
We get
\begin{equation}\label{1493signcon}
\gamma I-\mathcal{T}_{B}(P_{c})(\ell)
\geq \xi_{3}I \ \mu\text{-}a.e..
\end{equation}
Then, performing algebraic manipulations, we arrive at
\begin{equation}\label{eq1512}
\begin{split}
&W(P_{c})(\ell)-W(P)(\ell)\\
&=-\{[P(\ell)-P_{c}(\ell)]
-[A(\ell)-B(\ell)\bar{\mathcal{F}}^{\gamma}(P)(\ell)]^{T}\\
&\ \ \ \cdot\mathcal{E}(P-P_{c})(\ell)
[A(\ell)-B(\ell)\bar{\mathcal{F}}^{\gamma}(P)(\ell)]\}\\
&\ \ \ -[\bar{\mathcal{F}}^{\gamma}(P_{c})(\ell)
-\bar{\mathcal{F}}^{\gamma}(P)(\ell)]^{T}
[\gamma I-\mathcal{T}_{B}(P_{c})(\ell)]\\
&\ \ \ \cdot
[\bar{\mathcal{F}}^{\gamma}(P_{c})(\ell)
-\bar{\mathcal{F}}^{\gamma}(P)(\ell)]\ \mu\text{-}a.e..
\end{split}
\end{equation}
Define
$\xi_{4}:=\xi_{2}-\varepsilon_{P}(1+
\|A-B\bar{\mathcal{F}}^{\gamma}(P)\|^{2}
_{\mathcal{H}_{\infty}^{n\times n}})
-\xi_{3}^{-2}\gamma\varepsilon_{P}^{2}
\|A\|_{\mathcal{H}_{\infty}^{n\times n}}^{2}
\|B\|_{\mathcal{H}_{\infty}^{n\times r}}^{2}
(1+\xi_{1}^{-1}\|B\|_{\mathcal{H}_{\infty}^{n\times r}}^{2}\|P\|_{\mathcal{H}_{\infty}^{n\times n}})^{2}.$
By recalling \eqref{eq1512} and $W(P)(\ell)\geq \xi_{2} I $, it holds that
\begin{equation}\label{1574ieq}
W(P_{c})(\ell)
\geq \xi_{4}I\ \mu\text{-}a.e..
\end{equation}
Now select $\varepsilon_{P}$ sufficiently small such that $0<\varepsilon_{P}<\xi_{0}$ together with the conditions $\xi_{3}>0$ and $\xi_{4}>0$ hold.
Combining \eqref{1493signcon} with
\eqref{1574ieq}, then using Proposition \ref{Schurcom} twice, we conclude that \eqref{1877noLinearEP} is valid with the given $P_{c}$.
\end{appendices}

\begin{appendices}
\section{Proof of Lemma \ref{lemma1862}}\label{AppD}
Let $X_{i}=\mathcal{E}(P_{c})(\bar{h}_{i})$,  $i\in\overline{1, N}$, where $\{\bar{h}_{i},\ i\in\overline{1, N}\}$
is the sequence of selected discrete points.
Since $P_{c}$ satisfies inequality \eqref{1877noLinearEP},
it can be checked that there exists  $\xi_{5}>0$ such that
\begin{equation}\label{1698LMI8r}
\Xi^{8}_{i}(\ell)+\diag\{ [X_{i}\mathcal{E}(P_{c})(\ell)^{-1}X_{i}-X_{i}],0,0,0\}
\geq \xi_{5}I
\end{equation}
holds for $\mu$-almost all $\ell\in\Lambda_{i}$ and $i\in\overline{1, N}$, where
\begin{equation*}
\begin{split}
\Xi^{8}_{i}(\ell)=\left[
  \begin{array}{cccc}
    X_{i} & 0 & X_{i}A(\ell) & X_{i}B(\ell)\\
    0 & I &  C(\ell) & 0 \\
   A(\ell)^{T}X_{i} & C(\ell)^{T} & P_{i} & 0 \\
   B(\ell)^{T}X_{i} & 0 & 0 & \gamma I \\
  \end{array}
\right].
\end{split}
\end{equation*}
Bearing $P\in\mathcal{H}_{\infty}^{n+*}$ in mind, we get that there
exists $\xi_{0}>0$ such that
$\xi_{0} I \leq P(\ell)\leq \|P\|_{\mathcal{H}_{\infty}^{n\times n}} I\ \mu\text{-}a.e..$
Then, given that $\|P-P_{c}\|_{\mathcal{H}^{n\times n}_{\infty}}\leq \varepsilon_{P}$, where $\varepsilon_{P}\in(0,\xi_{0})$,
we obtain
$$
0<\xi_{0}-\varepsilon_{P}\leq \|\mathcal{E}(P_{c})(\ell)\|_{\mathbb{R}^{n\times n}}\leq \|P\|_{\mathcal{H}_{\infty}^{n\times n}}+\varepsilon_{P} \ \mu\text{-}a.e..
$$
Moreover, from the fact that
$$
\mathcal{E}(P_{c})(\ell)-X_{i}
=[Q(\ell)-Q_{i}]\Upsilon[Q(\ell)+Q_{i}]^{T}
$$
holds for $\mu$-almost all $\ell\in\Lambda_{i}$ and $i\in\overline{1, N}$,
it follows that
$$
\|\mathcal{E}(P_{c})(\ell)-X_{i}\|_{\mathbb{R}^{n\times n}}
\leq2\sqrt{N}\sigma_{A,B,C,Q}(\|P\|_{\mathcal{H}_{\infty}^{n\times n}}+\varepsilon_{P})
$$
holds for $\mu$-almost all $\ell\in\Lambda_{i}$, $i\in\overline{1, N}$.
Thus,
\begin{equation*}
\begin{split}
&\|X_{i}\mathcal{E}(P_{c})(\ell)^{-1}X_{i}-X_{i}\|_{\mathbb{R}^{n\times n}}\\
&\leq \|X_{i}\|_{\mathbb{R}^{n\times n}}^{2}\|\mathcal{E}(P_{c})(\ell)^{-1}-X_{i}^{-1}\|_{\mathbb{R}^{n\times n}}\\
&\leq2\sqrt{N}\sigma_{A,B,C,Q}(\xi_{0}-\varepsilon_{P})^{-2} (\|P\|_{\mathcal{H}_{\infty}^{n\times n}}+\varepsilon_{P})^{3}
\end{split}
\end{equation*}
holds for $\mu$-almost all $\ell\in\Lambda_{i}, i\in\overline{1, N}$.
Define
$$\xi_{6}=\xi_{5}- 2\sqrt{N}\sigma_{A,B,C,Q}(\xi_{0}-\varepsilon_{P})^{-2} (\|P\|_{\mathcal{H}_{\infty}^{n\times n}}+\varepsilon_{P})^{3}$$
 and
let $\sigma_{A,B,C,Q}$ satisfy $\xi_{6}>0.$
Now \eqref{1698LMI8r} allows us to derive that
$
\Xi^{8}_{i}(\ell)\geq \xi_{6}I
$
holds for $\mu$-almost all $\ell\in \Lambda_{i}$, $i\in\overline{1, N}.$
On the basis of Proposition \ref{Schurcom},
this is equivalent to that there exists $\xi_{7}>0$ such that
\begin{equation}\label{1699LMI}
\begin{split}
\Pi^{2}_{i}
\geq
\left[
  \begin{array}{cc}
    \xi_{7}I & \Xi^{9}_{i}(\ell) \\
    \Xi^{9}_{i}(\ell)^{T} & \xi_{7}I \\
  \end{array}
\right]
\end{split}
\end{equation}
holds for $\mu$-almost all $\ell\in \Lambda_{i}$, $i\in\overline{1, N}$, where $\Pi^{2}_{i}$ is given by \eqref{1552pi2} with $X_{i}^{T}=X_{i}$ and
\begin{equation*}
\begin{split}
&\Xi^{9}_{i}(\ell)=\\
&\left[
                     \begin{array}{ccc}
                       -\sigma_{A,i}X_{i}\Gamma_{A,i}(\ell) & -\sigma_{B,i}X_{i}\Gamma_{B,i}(\ell) & -\sigma_{Q,i}\Gamma_{Q,i}(\ell)\Upsilon \\
                       -\sigma_{C,i}\Gamma_{C,i}(\ell) & 0 & 0 \\
                     \end{array}
                   \right].
\end{split}
\end{equation*}
Note that
$\Xi^{9}_{i}(\ell)
\Xi^{9}_{i}(\ell)^{T}
\leq\sigma_{A,B,C,Q}^{2}
\Xi^{10}_{i}(\ell)\Xi^{10}_{i}(\ell)^{T},
$
where
$
\Xi^{10}_{i}(\ell)=\left[
   \begin{array}{ccc}
     X_{i}\Gamma_{A,i}(\ell) & X_{i}\Gamma_{B,i}(\ell) & \Gamma_{Q,i}(\ell)\Upsilon \\
     \Gamma_{C,i}(\ell) & 0 & 0 \\
   \end{array}
 \right].
$
Choosing $\sigma_{A,B,C,Q}$ sufficiently small such that there exists $\xi_{8}>0$ satisfying
$\xi_{7}I
-\xi_{7}^{-1}\sigma_{A,B,C,Q}^{2}
\Xi^{10}_{i}(\ell)\Xi^{10}_{i}(\ell)^{T}\geq \xi_{8} I,$
we get that
$$\xi_{7}I
-\xi_{7}^{-1}\Xi^{9}_{i}(\ell)
\Xi^{9}_{i}(\ell)^{T}
\geq\xi_{8} I.$$
By applying Proposition \ref{Schurcom} and then considering \eqref{1699LMI}, it can be derived that there exists $\xi_{9}>0$ such that
$
\Pi^{2}_{i}
\geq\xi_{9} I,\ i\in\overline{1, N}.
$
Clearly, when $\sigma_{A,B,C,Q}$ is sufficiently small, there exist scalars $\alpha_{i}>0, \beta_{i}>0,$ and  $\rho_{i}>0$ such that $\Pi^{3}_{i}<\frac{1}{2}\xi_{9}I$, $i \in \overline{1, N}$.
According to Proposition \ref{1552pro}, when  $X_{i}=X_{i}^{T},\ i\in\overline{1, N}$, the LMIs \eqref{1171PLMI}  are equivalent to \eqref{1534LMIs}.
Therefore, we conclude that  the LMIs \eqref{1171PLMI}  are feasible.
\end{appendices}


\end{document}